\newtheorem{theorem}{Theorem}[section]
\newtheorem{lemma}[theorem]{Lemma}
\newtheorem{proposition}[theorem]{Proposition}
\newtheorem{corollary}[theorem]{Corollary}
 \theoremstyle{definition}
\newtheorem{definition}[theorem]{Definition}
\newtheorem{example}[theorem]{Example}
\theoremstyle{remark}
\numberwithin{equation}{section}
\begin{document}

\title[Fully non-linear equations on noncompact manifolds]
{Fully non-linear elliptic equations on noncompact complex manifolds}

\author{Hanzhang Yin}
\address{School of Mathematics, Harbin Institute of Technology,
         Harbin, Heilongjiang 150001, China}
\email{YinHZ@hit.edu.cn}

\begin{abstract}

In this paper, we establish a priori estimates and existence results for solutions of a general class of
fully non-linear equations on noncompact K\"{a}hler and Hermitian manifolds. As geometric applications, we construct complete K\"{a}hler metrics with prescribed volume forms on strictly pseudoconvex domains, as well as find Einstein metrics on complete noncompact K\"{a}hler manifolds and Hessian manifolds with negative first Chern class.

\noindent{Keywords: K\"{a}hler manifolds; Hermitian manifolds; Fully non-linear equations.}

\end{abstract}

\maketitle

\section{Introduction}

Let $(M,\alpha)$ be a complete Hermitian manifold of dimension $n$. Fix a real $(1,1)$-form $\chi$ on $(M,\alpha)$, for any $C^2$ function $u:M\rightarrow \mathbb{R}$ we have a new real $(1,1)$-form $g=\chi+\sqrt{-1}\partial\bar{\partial}u$, and we can define the endomorphism of $T^{1,0}M$ given by $A_j^i=\alpha^{i\bar{p}}g_{j\bar{p}}$. In this paper, we consider the equation for $u$ as follows:
\begin{equation}\label{e1.1}
F(A)=h(x,u),
\end{equation}
for a given function $h$ on $M$, where
\begin{equation}
F(A)=f(\lambda_1,\ldots,\lambda_n)
\end{equation}
is a smooth symmetric function of the eigenvalues of $A$. Such equations have been studied by Sz\'{e}kelyhidi \cite{S18} on compact Hermitian manifolds.

There are several structure conditions for equation \eqref{e1.1}:

\textbf{(a1)} $f$ is defined on an open symmetric convex cone
$\Gamma \subseteq \mathbb{R}^n$ and $\Gamma\neq \mathbb{R}^n$, containing the positive orthant $\Gamma_n=\{(x_1,\ldots,x_n)\in \mathbb{R}^n:x_i>0,i=1,\ldots,n\}$.

\textbf{(a2)} $f$ is symmetric, smooth, concave and increasing, i.e. its partials satisfy $f_i>0$ for all $i$.

\textbf{(a3)} If $h(x)$ is independent of $u$, we only require: $\sup_{\partial \Gamma}f<\inf_M h$;
\\\hspace*{1.25cm}If $h(x,u)$ depends on both $x$ and $u$, we instead require: $\sup_{\partial \Gamma}f=-\infty$.

\textbf{(a4)} For all $\mu\in \Gamma$, we have
\begin{equation}
\lim_{t\rightarrow\infty}f(t\mu)=\sup_{\Gamma} f,
\end{equation}
where both sides are allowed to be $\infty$.

\textbf{(a5)} $h_u(x,u)\geq 0$ and $\sup_M h(x,0)<\sup_\Gamma f$.

Here are some familiar examples of such non-linear operators $f(\lambda)$: When we take $f(\lambda_1,\ldots,\lambda_n)=\log\lambda_1\ldots\lambda_n$,  $\Gamma=\Gamma_n$, \eqref{e1.1} is the complex Monge-Amp\`{e}re equation, Yau \cite{Yau78} first solved such equations on compact K\"{a}hler manifolds. Guo-Phong \cite{GP23} established sharp $L^\infty$-estimates of complex Monge-Amp\`{e}re equations by PDE methods and without pluripotential theory. A related equation, the $(n-1)$ Monge-Amp\`{e}re equation was introduced by Fu-Wang-Wu \cite{FWW10}, in this case we take $f=\log \widetilde{\lambda_1}\ldots\widetilde{\lambda_n}$, $\Gamma=\Gamma_n$, where
\[\widetilde{\lambda_k}=\frac{1}{n-1}\sum_{i\neq k}\lambda_i,\]
for each $k$. Note that a class of $(n-1)$ Monge-Amp\`{e}re equations falls within our framework (see Section 7); further studies on such equations can be found in \cite{FWW15,JL23,TW17,TW19}.

The more general examples are given by $f=\sigma_k^{\frac{1}{k}}$ and $f=(\sigma_k/\sigma_l)^{\frac{1}{k-l}}$, $1\leq l<k\leq n$ defined on the cone
\[\Gamma_k=\{\lambda\in \mathbb{R}^n:\sigma_j(\lambda)>0,j=1,\ldots,k\},\]
where $\sigma_k$ is the $k$-th elementary symmetric  polynomial
\[\sigma_k(\lambda)=\sum_{i_1<\cdots<i_k}\lambda_{i_1}\cdots\lambda_{i_k},\;\;\;1\leq k\leq n.\]
Consider $f=(\sigma_n/\sigma_l)^{\frac{1}{n-l}}(\lambda)=h$, when $l=n-1$ and $h$ is constant, Song-Weinkove \cite{SW08} showed that a solution exists on K\"{a}hler manifolds if there is a $\mathcal{C}$-subsolution (see Definition \ref{d4.1}) for \eqref{e1.1}. Fang-Lai-Ma \cite{FLM11} generalized this result to general $k$.

Since we only require $h_u(x,u)\geq 0$ in condition \textbf{(a5)}, the $C^{0}$-estimates for \eqref{e1.1} is a difficulty of the paper. We begin by reviewing some related works. Yau \cite{Yau78} established the $C^{0}$-estimates for Monge-Amp\`{e}re equations on compact K\"{a}hler manifolds and solved the Calabi conjecture. Tosatti-Weinkove \cite{TW10} used Morse iteration to derive the $L^\infty$-estimates for Monge-Amp\`{e}re equations on compact Hermitian manifolds, generalizing the Calabi conjecture to Hermitian manifolds. Guo-Phong \cite{GP24} established the $L^\infty$-estimates for general classes of fully non-linear equations on Hermitian manifolds, their method can also be applied to open manifolds with a positive lower bound on their injectivity radii, the estimates on open manifolds rely on the behavior of the solution function $u$ in the interior of the open manifolds.

In this paper, we establish the $C^{0}$-estimates for \eqref{e1.1} that depend only on the behavior of the solution function $u$ at infinity on the open manifolds. As we know, any K\"{a}hler form $\omega$ can be locally expressed by $\omega=\sqrt{-1}\partial\bar{\partial}\varphi>0$, where $\varphi$ is called the K\"{a}hler potential. The key idea of $C^{0}$-estimates is to assume that the complex manifold $M$ admits a bounded global K\"{a}hler potential, see the following definition:

\begin{definition}\label{d1.1}
A complex manifold $M$ is said to have a bounded global K\"{a}hler potential if and only if there exists a bounded $C^\infty$ function $\varphi:M\rightarrow \mathbb{R}$ such that $\sqrt{-1}\partial\bar{\partial}\varphi>0$ (by subtracting a constant, we may assume that $\varphi<0$).
\end{definition}

\textbf{Remark}: The above assumption implies that $M$ is a noncompact K\"{a}hler manifold, because $M$ admits a K\"{a}hler form $\sqrt{-1}\partial\bar{\partial}\varphi>0$, and if $M$ is compact, the maximal principle shows that $\varphi$ is a constant, this contradicts the fact that $\sqrt{-1}\partial\bar{\partial}\varphi>0$. And not all noncompact complex manifolds satisfy the assumption. For example, $\mathbb{C}^n$ does not, because if $\sqrt{-1}\partial\bar{\partial}\varphi>0$, this means $4\varphi_{i\bar{i}}=\frac{\partial^2\varphi}{\partial x_i^2}+\frac{\partial^2\varphi}{\partial y_i^2}>0$, and any subharmonic function on $\mathbb{R}^2$ that is bounded must be constant (see \cite{AGR01}, Chapter 11). In section 3, we will give some examples that satisfy the assumption, and construct some metrics with bounded geometry on such manifolds.

The $C^{0}$-estimates is as follows:
\begin{theorem}\label{t6.1}
Let $\Omega$ be a complete K\"{a}hler manifold with a bounded global K\"{a}hler potential $\varphi<0$, let $\alpha$ be a Hermitian metric on $\Omega$.
Fix a real $(1,1)$-form $\chi$ on $(\Omega,\alpha)$ such that $\lambda[\alpha^{i\bar{p}}\chi_{j\bar{p}}]\in \Gamma$, given $h\in C^1(\Omega \times\mathbb{R})$,
let $u\in C^2(\Omega)$ satisfy that
\begin{equation}\label{e6.1}
F(\alpha^{i\bar{p}}(\chi_{j\bar{p}}+u_{j\bar{p}}))=h(x,u)
\end{equation}
Suppose that the above equation satisfies conditions \emph{\textbf{(a1)}}-\emph{\textbf{(a5)}}, and assume that
\begin{equation}\label{en6.2}
\sup_{\Omega}\frac{|h(x,0)-F[\alpha^{i\bar{p}}\chi_{j\bar{p}}]|}{\underset{\min}{\lambda}(\alpha^{a\bar{l}}\varphi_{b\bar{l}})}<\infty,
\end{equation}
then there is a constant $\zeta$ depending only on $\Omega$, $\alpha$, $\chi$, $F$, $h$, $\varphi$ and $n$ such that
\begin{equation}\label{en6.3}
|u|\leq \zeta(1+\limsup_{z\rightarrow \infty}|u|).
\end{equation}
\end{theorem}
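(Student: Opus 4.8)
The plan is a maximum-principle argument in which the bounded K\"ahler potential $\varphi$ supplies the barrier. Set $B=\sup_\Omega|\varphi|<\infty$, let $C_1$ denote the finite supremum in \eqref{en6.2}, write $A$ for the endomorphism $\alpha^{i\bar p}(\chi_{j\bar p}+u_{j\bar p})$, and put $M_0=\limsup_{z\to\infty}|u|$; if $M_0=\infty$ there is nothing to prove, so assume $M_0<\infty$. For a constant $N>0$, to be fixed at the end purely in terms of $\Omega,\alpha,\chi,F,h,\varphi,n$, consider the two comparison functions $\psi_N^-=u-N\varphi$ and $\psi_N^+=u+N\varphi$. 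Since $\varphi\in[-B,0)$ we have $u\le\psi_N^-\le u+NB$ and $u-NB\le\psi_N^+\le u$, so it is enough to bound $\inf_\Omega\psi_N^-$ from below and $\sup_\Omega\psi_N^+$ from above. Because $M_0<\infty$ both $\psi_N^\pm$ are bounded near infinity, with $\liminf_{z\to\infty}\psi_N^-\ge-M_0$ and $\limsup_{z\to\infty}\psi_N^+\le M_0$; hence either the relevant extremum equals the corresponding $\liminf/\limsup$ at infinity --- in which case $\inf_\Omega u\ge-M_0-NB$, resp.\ $\sup_\Omega u\le M_0+NB$, and we are done --- or, by continuity and completeness of $\Omega$, it is attained at an interior point.

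For the lower bound suppose $\psi_N^-$ attains its infimum at an interior point $x_1$. There $\sqrt{-1}\partial\bar\partial u\ge N\sqrt{-1}\partial\bar\partial\varphi$, so $\chi_{j\bar p}+u_{j\bar p}\ge\chi_{j\bar p}+N\varphi_{j\bar p}$ at $x_1$, and since $\alpha>0$ this forces $\lambda[A(x_1)]\ge\lambda[\alpha^{i\bar p}(\chi_{j\bar p}+N\varphi_{j\bar p})](x_1)$ componentwise; both tuples lie in $\Gamma$ because $\lambda[\alpha^{i\bar p}\chi_{j\bar p}]\in\Gamma$ and $\Gamma$ is a convex cone containing the positive orthant. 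Monotonicity \textbf{(a2)} then gives $h(x_1,u(x_1))=F(A(x_1))\ge F[\alpha^{i\bar p}(\chi_{j\bar p}+N\varphi_{j\bar p})](x_1)$. If $u(x_1)\ge0$ then $\psi_N^-(x_1)=u(x_1)-N\varphi(x_1)\ge0$, so $\inf_\Omega u\ge-NB$ and we are done. If $u(x_1)<0$ then $h_u\ge0$ from \textbf{(a5)} gives $h(x_1,u(x_1))\le h(x_1,0)$, so, using \eqref{en6.2},
\[
F[\alpha^{i\bar p}(\chi_{j\bar p}+N\varphi_{j\bar p})](x_1)-F[\alpha^{i\bar p}\chi_{j\bar p}](x_1)\le h(x_1,0)-F[\alpha^{i\bar p}\chi_{j\bar p}](x_1)\le C_1\,\lambda_{\min}(\alpha^{a\bar l}\varphi_{b\bar l})(x_1).
\]
It therefore suffices to choose $N$ so large that, at \emph{every} point of $\Omega$,
\[
F[\alpha^{i\bar p}(\chi_{j\bar p}+N\varphi_{j\bar p})]-F[\alpha^{i\bar p}\chi_{j\bar p}]>C_1\,\lambda_{\min}(\alpha^{a\bar l}\varphi_{b\bar l});
\]
this contradicts the displayed inequality and rules out $u(x_1)<0$. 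Here $\sqrt{-1}\partial\bar\partial\varphi>0$ is essential: the left side is a concave, increasing function of $N$ by \textbf{(a2)}, and since $\lambda[\alpha^{a\bar l}\varphi_{b\bar l}]\in\Gamma$, condition \textbf{(a4)} forces $F[\alpha^{i\bar p}(\chi_{j\bar p}+N\varphi_{j\bar p})]\to\sup_\Gamma f$ as $N\to\infty$; together with $F[\alpha^{i\bar p}\chi_{j\bar p}]\le h(x,0)+C_1\lambda_{\min}(\alpha^{a\bar l}\varphi_{b\bar l})$ and the gap $\sup_\Omega h(x,0)<\sup_\Gamma f$ from \textbf{(a5)}, one extracts the required uniform margin (weighted by $\lambda_{\min}(\alpha^{a\bar l}\varphi_{b\bar l})$), propagating it from one value of $N$ to all larger ones by concavity.

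The upper bound is the mirror image, run against $\psi_N^+=u+N\varphi$. At an interior maximum $x_0$ one has $\sqrt{-1}\partial\bar\partial u\le-N\sqrt{-1}\partial\bar\partial\varphi$, hence $\lambda[A(x_0)]\le\lambda[\alpha^{i\bar p}(\chi_{j\bar p}-N\varphi_{j\bar p})](x_0)$; the larger tuple dominates $\lambda[A(x_0)]\in\Gamma$, hence itself lies in $\Gamma$, and \textbf{(a2)} gives $h(x_0,u(x_0))\le F[\alpha^{i\bar p}(\chi_{j\bar p}-N\varphi_{j\bar p})](x_0)$. The case $u(x_0)<0$ finishes at once; if $u(x_0)\ge0$ then $h(x_0,0)\le F[\alpha^{i\bar p}(\chi_{j\bar p}-N\varphi_{j\bar p})](x_0)$, and since $\sqrt{-1}\partial\bar\partial\varphi>0$ pushes $\lambda[\alpha^{i\bar p}(\chi_{j\bar p}-N\varphi_{j\bar p})]$ toward $\partial\Gamma$ as $N$ grows, \eqref{en6.2} together with \textbf{(a3)} --- $\sup_{\partial\Gamma}f=-\infty$ when $h$ depends on $u$, and $\sup_{\partial\Gamma}f<\inf_M h$ when it does not --- makes this impossible once $N$ is large. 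Combining the two bounds and letting $\zeta$ absorb $N$ and $B$ yields \eqref{en6.3}.

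The main obstacle is exactly the uniformity of the comparison of $F[\alpha^{i\bar p}(\chi_{j\bar p}\pm N\varphi_{j\bar p})]$ with $F[\alpha^{i\bar p}\chi_{j\bar p}]$ and $h(x,0)$ over the noncompact $\Omega$: on a compact manifold it is immediate from continuity and \textbf{(a3)}/\textbf{(a4)}, but here one has no a priori control of $\chi$ relative to $\alpha$, and it is precisely the scale-invariant shape of hypothesis \eqref{en6.2} --- the ratio against $\lambda_{\min}(\alpha^{a\bar l}\varphi_{b\bar l})$ rather than a bare $L^\infty$ bound --- that is designed so that the barrier gain beats the error with a single constant $N$, via concavity and the limiting behaviour \textbf{(a3)}--\textbf{(a4)} of $f$. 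A secondary point needing care is the claim that, in the non-trivial case, the extremum of $\psi_N^\pm$ is genuinely attained at an interior point, which relies on completeness of $\Omega$ and the boundedness of $\varphi$ to control the barrier near infinity.
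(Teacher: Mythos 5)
Your strategy is essentially the paper's, repackaged. The paper substitutes $v=u/(\varphi-1)$, so that at an interior extremum of $v$ the extremal value $v(P)$ itself plays the role of your barrier constant: one arrives at exactly your inequality $h(P,0)\ge F[\alpha^{i\bar p}(\chi_{j\bar p}+N\varphi_{j\bar p})]$ with $N=v(P)$, and then solves for $v(P)$ rather than fixing $N$ in advance and deriving a contradiction. The dichotomy ``extremum escapes to infinity versus interior extremum'', the use of $h_u\ge 0$ to replace $h(x,u)$ by $h(x,0)$, and the role of \eqref{en6.2} are identical in the two arguments. Your $\inf_\Omega u$ half is sound: the uniform choice of $N$ follows from concavity, from $F^{ab}(B)\,\alpha^{a\bar l}\varphi_{b\bar l}\ge \underset{\min}{\lambda}(\alpha^{a\bar l}\varphi_{b\bar l})\sum_k f_k(B)$, and from the uniform positive lower bound on $\sum_k f_k(B)$ valid at any matrix $B$ with $F(B)\le \sup_\Omega h(\cdot,0)<\sup_\Gamma f$; this last bound is Sz\'ekelyhidi's Lemma~9 argument, reproduced in the paper as \eqref{e6.12}--\eqref{e6.14}, and it is precisely the ``uniform margin'' you gesture at but do not write out.

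The $\sup_\Omega u$ half, however, rests on the wrong mechanism as written. You argue that $\sqrt{-1}\partial\bar\partial\varphi>0$ ``pushes $\lambda[\alpha^{i\bar p}(\chi_{j\bar p}-N\varphi_{j\bar p})]$ toward $\partial\Gamma$'' and then invoke \textbf{(a3)}. This cannot produce a single $N$ working at every point of a noncompact $\Omega$: the rate at which the eigenvalues approach $\partial\Gamma$ depends on the pointwise sizes of $\chi$ and $\partial\bar\partial\varphi$, neither of which is assumed bounded, and at the maximum point the matrix is in any case forced to remain in $\Gamma$ because it dominates $A(x_0)$, so proximity to $\partial\Gamma$ gives nothing quantitative. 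The uniform mechanism is again concavity, read in the other direction:
$F[\alpha^{i\bar p}(\chi_{j\bar p}-N\varphi_{j\bar p})]\le F[\alpha^{i\bar p}\chi_{j\bar p}]-N\,F^{ab}[\alpha^{i\bar p}\chi_{j\bar p}]\,\alpha^{a\bar l}\varphi_{b\bar l}$,
combined with \eqref{en6.2}; this is what the paper does in \eqref{e6.18}--\eqref{e6.20}. Note that this step --- in your version and in the paper's alike --- tacitly requires $\sum_k F^{kk}[\alpha^{i\bar p}\chi_{j\bar p}]$ to be bounded below by a positive constant over all of $\Omega$ (the paper only remarks ``$F^{kk}>0$''); if you repair your upper bound along these lines you inherit the same requirement and should state it. With that correction the proposal is complete.
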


The following definition was introduced by Guan \cite{Guan14} and Sz\'{e}kelyhidi \cite{S18}, we restate it here for application in our case.

\begin{definition}\label{d4.1}
We say that $u$ is a $\mathcal{C}$-subsolution for the equation $F(A)=h$ (see \eqref{e1.1}) if the following holds. We require that for every point $x\in M$, if $\lambda=(\lambda_1,\ldots,\lambda_n)$ denote the eigenvalues of the endomorphism $\alpha^{i\bar{p}}g_{j\bar{p}}$ at $x$, then for all $i=1,\ldots,n$ we have
\begin{equation}\label{ne4.5}
\lim_{t\rightarrow\infty}f(\lambda+t\textbf{e}_i)>h(x)
\end{equation}
when $h(x)$ is independent of $u$. If $h(x,u)$ depends on both $x$ and $u$, we instead require
\begin{equation}
\lim_{t\rightarrow\infty}f(\lambda+t\textbf{{e}}_i)=\infty.
\end{equation}
Here $\textbf{e}_i$ denotes the $i$th standard basis vector. Note that part of the requirement is that $\lambda+t\textbf{e}_i\in \Gamma$ for sufficiently large $t$, for the limit to be defined.
\end{definition}

Cheng-Yau \cite{CY80} derived the a priori estimates for complex Monge-Amp\`{e}re equations on noncompact K\"{a}hler manifolds using generalized maximum principle. To solve the Gauduchon conjecture, Sz\'{e}kelyhidi-Tosatti-Weinkovewe \cite{STW17} established the second order estimates of a general class of fully non-linear equations on compact Hermitian manifolds. By combining the generalized maximum principle with certain results from \cite{STW17}, we obtain the a priori estimates for \eqref{e1.1} up to $C^{k+\beta}$:

\begin{theorem}\label{t4.4}
Let $(M,\alpha)$ be a complete Hermitian manifold with bounded geometry (see Definition \ref{db2.1}) of order $k-1$, $k\geq 4$. Fix a real $(1,1)$-form $\chi$ with bounded geometry of order $k-1$ on $(M,\alpha)$, given $h\in \tilde{C}^{k-2+\beta}(M)$,
let $u\in \tilde{C}^{k+\beta}(M)$ satisfy that
\begin{equation}\label{e1.9}
F(\alpha^{i\bar{p}}(\chi_{j\bar{p}}+u_{j\bar{p}}))=h,
\end{equation}
where $\beta\in (0,1)$. Suppose the $\underline{u}\in \tilde{C}^{\infty}(M)$ is a $\mathcal{C}$-subsolution for the equation \eqref{e1.9}. Suppose that the equation satisfies conditions \emph{\textbf{(a1)}}-\emph{\textbf{(a4)}}. Then, we have an estimate $|u|_{k+\beta}\leq C_k$, with constant $C_k$ depending on $k$, on the background date $M,\alpha,\chi,F,h$, on $\sup_M |u|$, and the subsolution $\underline{u}$.
\end{theorem}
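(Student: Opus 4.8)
The plan is to establish the $C^{k+\beta}$ estimate by the standard ladder of a priori estimates, the new ingredient being that on a noncompact manifold with bounded geometry the maximum-principle arguments must be replaced by the Cheng--Yau generalized maximum principle (which is available precisely because bounded geometry of order $k-1\ge 3$ forces a lower Ricci bound and positive injectivity radius). Since $\sup_M|u|$ is assumed finite, we already have the $C^0$ bound in hand and only need the gradient, second-order, and then Evans--Krylov plus Schauder steps. Throughout, ``constant'' means a constant depending only on the background data and on $\sup_M|u|$ and $\underline u$.

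First I would derive the $C^1$ estimate. One forms the test quantity $w = |\nabla u|^2 e^{-\psi(u)}$ (or Blocki's variant), where $\psi$ is chosen so that the bad gradient terms are absorbed; concavity of $f$ (condition \textbf{(a2)}) and the fact that $\underline u$ is a $\mathcal C$-subsolution give, at an interior would-be maximum, an inequality $L w \ge \theta |\nabla u|^2 - C$ for a positive constant $\theta$, where $L = F^{i\bar j}\partial_i\partial_{\bar j} + (\text{lower order})$ is the linearized operator. Because $M$ is noncompact I cannot simply evaluate at a maximum point; instead I apply the generalized maximum principle to $w$ (which is bounded once we know $|u|\le C$ and use the uniform ellipticity coming from the $\mathcal C$-subsolution as in \cite{S18}), producing a sequence $x_m$ with $w(x_m)\to \sup w$, $|\nabla w|(x_m)\to 0$, $L w(x_m) \le o(1)$; feeding this into the differential inequality yields $\sup|\nabla u|^2 \le C$.

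Next is the second-order estimate, which is the main obstacle. Here I would import the computation of Sz\'ekelyhidi--Tosatti--Weinkove \cite{STW17}: one considers $G = \log \lambda_{\max}(\nabla^2 u) + \phi(|\nabla u|^2) + \Psi(u)$ for suitable auxiliary functions $\phi,\Psi$, and the delicate STW argument — using concavity of $f$, the $\mathcal C$-subsolution to control the ``bad'' third-order terms, and the structure conditions \textbf{(a1)}--\textbf{(a4)} (in particular \textbf{(a4)} to handle the case $\lambda$ approaches $\partial\Gamma$) — gives a differential inequality $L G \ge -C$ modulo terms controlled by $\sup G$ at a maximum. The subtlety on a noncompact manifold is twofold: (i) $\lambda_{\max}$ is only Lipschitz, so one regularizes by maximizing over unit vectors and works with the viscosity/barrier formulation, and (ii) one must again replace ``evaluate at the max'' by the generalized maximum principle applied to $G$; for this one needs to know a priori that $G$ is bounded above, which follows because bounded geometry gives uniform bounds on the curvature of $\alpha$ and on $\chi$ and its covariant derivatives up to order $k-1\ge 3$, so all the coefficients appearing in the STW inequality are uniformly bounded. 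The output is $\sup_M|\nabla^2 u| \le C$, hence by the equation $F(A)=h$ together with \textbf{(a2)} the operator is uniformly elliptic with eigenvalues of $A$ in a fixed compact subset of $\Gamma$.

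With uniform ellipticity and the $C^2$ bound, the remaining steps are local and hence unaffected by noncompactness: in each coordinate ball of the fixed bounded-geometry atlas, the Evans--Krylov theorem gives an interior $C^{2+\beta'}$ estimate on half the ball with constants depending only on the ellipticity constants, the $C^2$ bound, and $|h|_{C^{\beta'}}$; since the charts have uniformly bounded geometry these local estimates patch to a global $|u|_{\tilde C^{2+\beta'}(M)}\le C$. Finally, differentiating the equation and applying interior Schauder estimates to the resulting linear equations for $\partial_\ell u$ (whose coefficients $F^{i\bar j}$ now lie in $\tilde C^{\beta'}$), and bootstrapping $k-2$ times using $h\in\tilde C^{k-2+\beta}(M)$ and $\chi,\alpha\in$ bounded geometry of order $k-1$, yields $|u|_{k+\beta}\le C_k$. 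I expect the second-order estimate — specifically, verifying that every term in the STW maximum-principle computation has a bound independent of the point and then legitimately running the generalized maximum principle on the non-smooth quantity $\log\lambda_{\max}$ — to be where essentially all the work lies.
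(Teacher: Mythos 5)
There is a genuine gap in your first step. For the general class of operators $f$ satisfying only \textbf{(a1)}--\textbf{(a4)}, a direct gradient estimate via a test quantity such as $w=|\nabla u|^2e^{-\psi(u)}$ is not known: the differential inequality $Lw\geq \theta|\nabla u|^2-C$ that you assert cannot be derived from concavity and the $\mathcal{C}$-subsolution alone (this works for the Monge--Amp\`ere operator via Blocki/Guan, but fails to be available in the general Caffarelli--Nirenberg--Spruck setting; Sz\'ekelyhidi \cite{S18} states this explicitly as the reason for his blow-up argument). The paper therefore inverts your order: it first proves the second-order estimate in the \emph{gradient-dependent} form $\sup_M|\sqrt{-1}\partial\bar\partial u|_\alpha\leq C(\sup_M|\nabla u|_\alpha^2+1)$, i.e. $\lambda_1\leq CK$ with $K=\sup_M|\nabla u|^2+1$, and only afterwards deduces $\sup_M|\nabla u|^2\leq C$ by a blow-up argument combined with a Liouville theorem (both local, hence compatible with bounded geometry). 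Your second step, as written, already presupposes the gradient bound in order to conclude $\sup_M|\nabla^2u|\leq C$, so the logical ladder does not close.

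The remainder of your outline matches the paper in spirit: the STW computation is adapted by replacing the classical maximum principle with a Cheng--Yau-type generalized maximum principle, and then Evans--Krylov plus Schauder bootstrapping in the uniform charts finishes the proof. Two implementation details differ and are worth noting. First, the paper handles the nonsmoothness of $\lambda_{\max}$ not by a viscosity/barrier formulation but by the perturbation $\tilde A=A-B$ with a fixed diagonal matrix $B$ separating the eigenvalues, and it must verify (via the implicit function theorem and bounded geometry) that the perturbed eigenvalue $\tilde\lambda_1$ is smooth on a ball of radius $r$ \emph{uniform} in the almost-maximum point $p_i$. Second, the generalized maximum principle is implemented concretely by minimizing $(L-\widetilde H)/\varphi^{p_i}$ over $B_r(p_i)$ with a uniform cutoff $\varphi^{p_i}$, which yields only approximate first- and second-order conditions $|d\widetilde H(x_i)|\leq c(L-H(p_i))$ and $(\widetilde H_{p\bar q}(x_i))\leq c(L-H(p_i))(\alpha_{p\bar q})$; the error terms $-C\sum_kF^{kk}|\widetilde H_k|^2$ they introduce into the STW inequality must be shown to be negligible for $i$ large. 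Your sketch gestures at these points but does not supply them.
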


Finally, we obtain the following existence result for the solution of $F(A)=h$:

\begin{theorem}\label{t7.4}
Let $\Omega$ be a complete K\"{a}hler manifold with a bounded global K\"{a}hler potential $\varphi<0$ and $\lim_{x\rightarrow\infty}\varphi(x)=0$. Let $\alpha$ be a Hermitian metric on $\Omega$ with bounded geometry of order $k-1$, where $k\geq 4$.
Fix a real $(1,1)$-form $\chi$ with bounded geometry of order $k-1$, and $\chi$ is uniformly equivalent to $\alpha$ on $(\Omega,\alpha)$ (see section 2), given $h\in \tilde{C}^{k-2+\beta}(\Omega \times\mathbb{R})$, where $\beta\in (0,1)$. Consider the following equation on $\Omega$:
\begin{equation}\label{e1.10}
F(\alpha^{i\bar{p}}(\chi_{j\bar{p}}+u_{j\bar{p}}))=h(x,u),
\end{equation}
Suppose that the above equation satisfies conditions \emph{\textbf{(a1)}}-\emph{\textbf{(a5)}} and $F(\alpha^{i\bar{p}}\chi_{j\bar{p}})=0$, assume that $|h(x,0)|=O(\underset{\min}{\lambda}(\alpha^{a\bar{l}}\varphi_{b\bar{l}}))$. Suppose $0$ is a $\mathcal{C}$-subsolution of \eqref{e1.10}. Then the equation \eqref{e1.10} exists a solution $u\in \tilde{C}^{k+\beta}(\Omega)$ such that $\chi+\sqrt{-1}\partial\bar{\partial}u$ is uniformly equivalent to $\alpha$.
\end{theorem}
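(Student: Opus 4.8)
The approach I would take is the continuity method, using the two families of a priori estimates already established---Theorem~\ref{t6.1} for $C^{0}$ and Theorem~\ref{t4.4} for the higher norms---to supply compactness, and the implicit function theorem to supply openness. Connect \eqref{e1.10} to a trivial equation through
\begin{equation}\label{e-cont}
F\bigl(\alpha^{i\bar p}(\chi_{j\bar p}+u_{j\bar p})\bigr)=t\,h(x,u),\qquad t\in[0,1];
\end{equation}
since $F(\alpha^{i\bar p}\chi_{j\bar p})=0$ and $\chi$ is uniformly equivalent to $\alpha$, the function $u\equiv 0$ solves \eqref{e-cont} at $t=0$ with $\chi+\sqrt{-1}\partial\bar\partial u$ uniformly equivalent to $\alpha$. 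Put
\[
I=\{\,t\in[0,1]:\ \eqref{e-cont}\text{ has a solution }u_t\in\tilde{C}^{k+\beta}(\Omega)\text{ vanishing at infinity, with }\chi+\sqrt{-1}\partial\bar\partial u_t\text{ uniformly equivalent to }\alpha\,\},
\]
so $0\in I$. One first checks that \textbf{(a1)}--\textbf{(a5)}, the normalization $F(\alpha^{i\bar p}\chi_{j\bar p})=0$, the bound $|h(x,0)|=O(\lambda_{\min}(\alpha^{a\bar l}\varphi_{b\bar l}))$, and the fact that $0$ is a $\mathcal{C}$-subsolution all pass to \eqref{e-cont} for each $t\in(0,1]$ (the case $t=0$ carries the explicit solution): \textbf{(a1)}--\textbf{(a4)} concern only $f$ and $\Gamma$ and are untouched by the factor $t$; $\partial_u(th)=th_u\ge 0$ and $\sup_\Omega th(x,0)<\sup_\Gamma f$ for $t\in(0,1]$ give \textbf{(a5)}; and $0$ stays a $\mathcal{C}$-subsolution because the relevant clause of Definition~\ref{d4.1} in the $u$-dependent setting that is our main concern, $\lim_{s\to\infty}f(\lambda+s\mathbf{e}_i)=\infty$, does not involve $h$. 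Thus Theorems~\ref{t6.1} and \ref{t4.4} apply along the whole family, and it remains to prove $I$ is open and closed in $[0,1]$; then $I=[0,1]$ and $u=u_1$ is the desired solution.

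For openness I would apply the implicit function theorem in the Banach space $\tilde{C}^{k+\beta}_0(\Omega)$ of functions in $\tilde{C}^{k+\beta}(\Omega)$ tending to $0$ at infinity. Fix $\bar t\in I$ with solution $u_{\bar t}$, and consider $\Phi(t,v)=F\bigl(\alpha^{i\bar p}(\chi_{j\bar p}+(u_{\bar t}+v)_{j\bar p})\bigr)-t\,h(x,u_{\bar t}+v)$ near $(\bar t,0)$ in $\mathbb{R}\times\tilde{C}^{k+\beta}_0(\Omega)$, with values in $\tilde{C}^{k-2+\beta}(\Omega)$. Since $\chi+\sqrt{-1}\partial\bar\partial u_{\bar t}$ is uniformly equivalent to $\alpha$ and has bounded geometry, $\Phi$ is $C^1$, $\Phi(\bar t,0)=0$, and $D_v\Phi(\bar t,0)=L$, where $Lv=F^{i\bar j}v_{i\bar j}-\bar t\,h_u(x,u_{\bar t})\,v$ is uniformly elliptic with $\tilde{C}^{k-2+\beta}$ coefficients and nonpositive zeroth-order term. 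The key step is that $L\colon\tilde{C}^{k+\beta}_0(\Omega)\to\tilde{C}^{k-2+\beta}(\Omega)$ is an isomorphism: injectivity follows from the strong maximum principle (a solution of $Lv=0$ vanishing at infinity attains an interior extremum, hence $v\equiv 0$ because $-\bar th_u\le 0$); surjectivity follows by exhausting $\Omega$ by relatively compact sets, solving the Dirichlet problems there, and dominating the solutions by the fixed supersolution $A(-\varphi)^{s}$ (for suitable $A$ and $s\in(0,1]$), which is a genuine supersolution near infinity because $\sqrt{-1}\partial\bar\partial\varphi>0$ forces its complex Hessian to be negative and large there, while $\varphi\to 0$ makes it vanish at infinity; the estimate $\|v\|_{\tilde{C}^{k+\beta}}\le C\|Lv\|_{\tilde{C}^{k-2+\beta}}$ then follows from interior Schauder estimates on the bounded-geometry manifold together with this $L^{\infty}$ bound. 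The implicit function theorem then yields solutions $u_t=u_{\bar t}+v(t)$ of \eqref{e-cont} for $t$ near $\bar t$, lying in $\tilde{C}^{k+\beta}(\Omega)$, vanishing at infinity, and, after shrinking the neighborhood by continuity of $\chi+\sqrt{-1}\partial\bar\partial u_t$ in $t$, uniformly equivalent to $\alpha$; so $I$ is open.

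For closedness let $t_j\to t_*$ with $t_j\in I$. Each $u_{t_j}$ vanishes at infinity, so $\limsup_{z\to\infty}|u_{t_j}|=0$, and since $F(\alpha^{i\bar p}\chi_{j\bar p})=0$ the growth hypothesis is exactly \eqref{en6.2}; thus Theorem~\ref{t6.1} gives the uniform bound $|u_{t_j}|\le\zeta$. To ensure the limit again vanishes at infinity, I would in fact prove a uniform decay estimate $|u_{t_j}|\le A(-\varphi)^{s}$ by comparing $u_{t_j}$ with $\pm A(-\varphi)^{s}$ near infinity, using the comparison principle for $F$ (valid because $F$ is elliptic and $h_u\ge 0$), the generalized maximum principle (available from bounded geometry), the bound $|h(x,0)|=O(\lambda_{\min}(\alpha^{a\bar l}\varphi_{b\bar l}))$ controlling the right-hand side at infinity, and the structure condition $\sup_{\partial\Gamma}f=-\infty$ (respectively $\sup_{\partial\Gamma}f<\inf_M h$), which keeps the eigenvalue vector of a solution inside $\Gamma$ and so prevents $u_{t_j}$ from drifting away from the barrier near infinity. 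With these uniform bounds, Theorem~\ref{t4.4} (applied with the right-hand side frozen as $x\mapsto t_jh(x,u_{t_j}(x))$, with $0$ as $\mathcal{C}$-subsolution, and bootstrapping since $h$ depends on $u$) gives $|u_{t_j}|_{k+\beta}\le C_k$ and, in particular, $\chi+\sqrt{-1}\partial\bar\partial u_{t_j}$ uniformly equivalent to $\alpha$, with constants independent of $j$. An Arzel\`a-Ascoli and diagonal argument over a compact exhaustion then extracts a subsequence converging in $C^{k}_{\mathrm{loc}}$ to some $u_{t_*}\in\tilde{C}^{k+\beta}(\Omega)$ solving \eqref{e-cont} at $t_*$, with $\chi+\sqrt{-1}\partial\bar\partial u_{t_*}$ uniformly equivalent to $\alpha$ and, by the uniform decay estimate, $u_{t_*}\to 0$ at infinity; hence $t_*\in I$.

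Combining the three facts, $I=[0,1]$, and $u=u_1$ has all the stated properties. The step I expect to be the main obstacle is the uniform decay estimate in the closedness argument: constructing comparison functions out of $\varphi$ that are simultaneously sub- and supersolutions of \eqref{e-cont} near infinity and vanish there, balanced against the $O(\lambda_{\min}(\alpha^{a\bar l}\varphi_{b\bar l}))$-control of $h(x,0)$, is exactly what the hypotheses $\lim_{x\to\infty}\varphi(x)=0$, $F(\alpha^{i\bar p}\chi_{j\bar p})=0$ and the growth bound are tailored to make possible, and making it rigorous (pinning down the exponent $s$ and the location of the relevant extrema) is the delicate point. A secondary issue is the Fredholm theory for the linearized operator on the noncompact $\Omega$, which is handled by passing to the vanishing-at-infinity space $\tilde{C}^{k+\beta}_0(\Omega)$ and exploiting the $\varphi$-supersolution.
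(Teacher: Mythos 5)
Your overall scheme (continuity method, Theorem \ref{t6.1} for $C^0$, Theorem \ref{t4.4} for higher norms, Arzel\`a--Ascoli on a compact exhaustion) has the right flavor, but you run the continuity path directly on $F(A)=t\,h(x,u)$, and the two steps you yourself flag as delicate are exactly where the argument breaks, for the same underlying reason: the hypotheses give only $h_u\ge 0$ and do \emph{not} give a positive lower bound on $\underset{\min}{\lambda}(\alpha^{a\bar l}\varphi_{b\bar l})$ (the assumption $|h(x,0)|=O(\underset{\min}{\lambda}(\alpha^{a\bar l}\varphi_{b\bar l}))$ is a nontrivial decay condition precisely because that eigenvalue may tend to $0$ at infinity). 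Consequently $A(-\varphi)^{s}$ is not a usable barrier: for $s=1$ one has $F^{i\bar j}(A(-\varphi))_{i\bar j}=-A\,F^{i\bar j}\varphi_{i\bar j}$, which is negative but may tend to $0$ at infinity and so cannot dominate a general bounded right-hand side in your surjectivity step for $L=F^{i\bar j}\partial_i\partial_{\bar j}-\bar t\,h_u$ (and for $s<1$ the extra term $s(1-s)(-\varphi)^{s-2}\partial\varphi\wedge\bar\partial\varphi$ has the wrong sign). Likewise the uniform decay $|u_{t_j}|\le A(-\varphi)^{s}$, which you need so that the limit $u_{t_*}$ still vanishes at infinity, cannot be obtained by comparison when $h_u$ may vanish: the paper's own decay result (Lemma \ref{l7.1}) reaches $\sum A^{i\bar j}\alpha^{i\bar l}(u+c_2\varphi)_{j\bar l}\ge\epsilon(u+c_2\varphi)$ and concludes via the generalized maximum principle only because of the strict constant $\epsilon>0$; with $\epsilon=0$ that inequality yields nothing, and without the decay your set $I$ need not be closed (and Theorem \ref{t6.1} itself takes $\limsup_{z\to\infty}|u|$ as input).

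The paper resolves both problems with one device you omit: it first solves the regularized equation $F(A)=h(x,u)+\epsilon u$ (Lemmas \ref{l7.1}, \ref{t7.2} and \ref{t7.3}), where the strictly positive zeroth-order term makes the linearization invertible (the Dirichlet approximations satisfy $\sup|g^i|\le\sup|v|/\epsilon$) and yields $|u|=O(|\varphi|)$, hence $\limsup_{z\to\infty}|u|=0$ as required by Theorem \ref{t6.1}; it then checks that all estimates are uniform in $\epsilon$, and obtains Theorem \ref{t7.4} by letting $\epsilon_m\to 0$ and extracting a limit over a compact exhaustion. To repair your proof you would need either genuinely new barriers and Fredholm theory valid when $h_u$ vanishes, or to insert the $\epsilon u$ regularization as the paper does.
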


If, in addition, assuming that $h_u(x,u)$ is strictly positive, we can generalize Theorem \ref{t7.4} to Hermitian manifolds:
\begin{theorem}\label{t7.5}
Let $(\Omega,\alpha)$ be a complete Hermitian manifold with bounded geometry of order $k-1$, where $k\geq 4$.
Fix a real $(1,1)$-form $\chi$ with bounded geometry of order $k-1$, and $\chi$ is uniformly equivalent to $\alpha$ on $(\Omega,\alpha)$, given $h\in \tilde{C}^{k-2+\beta}(\Omega \times\mathbb{R})$ such that $h_u\geq a>0$ for some constant $a$, where $\beta\in (0,1)$. Consider the following equation on $\Omega$:
\begin{equation}\label{e7.33}
F(\alpha^{i\bar{p}}(\chi_{j\bar{p}}+u_{j\bar{p}}))=h(x,u),
\end{equation}
Suppose that the above equation satisfies conditions \emph{\textbf{(a1)}}-\emph{\textbf{(a5)}} and $F(\alpha^{i\bar{p}}\chi_{j\bar{p}})=0$, suppose $0$ is a $\mathcal{C}$-subsolution of \eqref{e7.33}. Then the equation \eqref{e7.33} exists a solution $u\in \tilde{C}^{k+\beta}(\Omega)$ such that $\chi+\sqrt{-1}\partial\bar{\partial}u$ is uniformly equivalent to $\alpha$.
\end{theorem}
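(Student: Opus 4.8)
The plan is to use the continuity method together with the a priori estimates of Theorem \ref{t4.4}. Since the setting here is a general Hermitian manifold (not necessarily admitting a bounded global K\"{a}hler potential), the $C^0$-estimate cannot come from Theorem \ref{t6.1}; instead it must come from the strict positivity $h_u \geq a > 0$. So I would first establish the $C^0$-estimate directly. For an upper bound on $u$: at an interior maximum (or, in the noncompact case, via the generalized maximum principle of Cheng--Yau, using that $\alpha$ has bounded geometry so Ricci is bounded below) one has $\sqrt{-1}\p\bpartial u \leq 0$, hence $\lambda[\alpha^{i\bar p}(\chi_{j\bar p}+u_{j\bar p})] \leq \lambda[\alpha^{i\bar p}\chi_{j\bar p}]$ coordinatewise at that point, and since $f$ is increasing this gives $h(x,u) = F(A) \leq F(\alpha^{i\bar p}\chi_{j\bar p}) = 0$; combined with $h_u \geq a$ and $\sup_\Omega h(x,0) < \sup_\Gamma f$ (condition \textbf{(a5)}) this forces $u \leq C$ for an explicit $C$ depending on $a$ and $\sup h(x,0)$. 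For the lower bound I would run the symmetric argument against the $\mathcal{C}$-subsolution $\underline u = 0$: using the standard $\mathcal{C}$-subsolution machinery of Guan and Sz\'{e}kelyhidi (the same input used in Theorem \ref{t4.4}), if $u - \underline u$ attains a large negative value then the eigenvalue vector $\lambda$ lies in a region where $f(\lambda) < \inf h$ is impossible, again yielding $u \geq -C$. Thus $\sup_\Omega |u| \leq C_0$ depending only on the stated data.

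Next I would set up the continuity path $F(\alpha^{i\bar p}(\chi_{j\bar p} + u_{j\bar p})) = t\, h(x,u) + (1-t)\, F(\alpha^{i\bar p}\chi_{j\bar p}) = t\, h(x,u)$ for $t \in [0,1]$ (using $F(\alpha^{i\bar p}\chi_{j\bar p}) = 0$), with $u \equiv 0$ solving the $t=0$ equation. Let $S \subseteq [0,1]$ be the set of $t$ for which a solution $u_t \in \tilde C^{k+\beta}(\Omega)$ exists with $\chi + \sqrt{-1}\p\bpartial u_t$ uniformly equivalent to $\alpha$. Openness: at a solution $u_t$, the linearized operator is $L(v) = F^{i\bar j} v_{i\bar j} - t\, h_u v$; since $F^{i\bar j}$ is positive definite (condition \textbf{(a2)}, $f_i > 0$) and uniformly elliptic on the uniformly-equivalent metric, and since $-t\, h_u \leq -t a \leq 0$ makes the zeroth-order term have the good sign, $L$ is an invertible map between the appropriate weighted/bounded H\"{o}lder spaces $\tilde C^{k+\beta} \to \tilde C^{k-2+\beta}$ on the complete manifold with bounded geometry — here the strict coercivity $h_u \geq a > 0$ is exactly what gives invertibility without a Fredholm/index obstruction on the noncompact manifold. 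So the implicit function theorem applies and $S$ is open. Closedness: every $u_t$ satisfies the same equation of the form \eqref{e1.9} with right-hand side $t\, h(x,u_t)$, which lies in $\tilde C^{k-2+\beta}$ with norm controlled (using the $C^0$-bound $|u_t|\le C_0$ just proved, uniformly in $t$); $0$ remains a $\mathcal{C}$-subsolution for every $t$ since scaling $h$ by $t\in(0,1]$ only shrinks the relevant limits while keeping them $=\infty$; hence Theorem \ref{t4.4} gives $|u_t|_{k+\beta} \leq C_k$ uniformly in $t$, and Arzel\`{a}--Ascoli (applied locally, plus the uniform equivalence of metrics to pass to the limit) shows $S$ is closed. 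Therefore $S = [0,1]$ and $t=1$ gives the desired solution.

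The main obstacle I anticipate is the closedness step on the noncompact manifold, specifically ensuring that the uniform equivalence $c^{-1}\alpha \leq \chi + \sqrt{-1}\p\bpartial u_t \leq c\,\alpha$ is preserved with a constant $c$ independent of $t$ — this is not automatic from $|u_t|_{k+\beta}\le C_k$ alone but requires the second-order estimate of Theorem \ref{t4.4} to be genuinely uniform and, crucially, a uniform \emph{lower} bound on the eigenvalues of $A_t$, which typically follows by combining the upper bound on $\Delta u_t$ with the equation $F(A_t) = t\,h(x,u_t) \geq -C$ and condition \textbf{(a4)} (an eigenvalue going to $0$ while the others stay bounded would push $f$ toward $\sup_{\p\Gamma} f = -\infty$ by \textbf{(a3)}, contradicting the lower bound on $t\,h$). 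A secondary technical point is justifying the generalized maximum principle and the implicit function theorem in the $\tilde C^{k+\beta}$ (bounded-geometry H\"{o}lder) category; this is where the bounded geometry hypotheses of order $k-1$ on $\alpha$ and $\chi$ are used, and I would cite the corresponding mapping properties on complete manifolds with bounded geometry as already invoked in the proof of Theorem \ref{t4.4}.
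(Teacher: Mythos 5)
Your overall strategy coincides with the paper's: the $C^0$-estimate is obtained exactly as in the paper's Theorem \ref{t6.4} (generalized maximum principle on the bounded-geometry manifold plus $h_u\geq a>0$, using $F(\alpha^{i\bar p}\chi_{j\bar p})=0$), and the rest is the continuity method with the a priori estimates of Section 5 adapted to a $u$-dependent right-hand side, which is how the paper reduces Theorem \ref{t7.5} to the argument of Lemma \ref{t7.3}. (Your aside that the lower bound needs the $\mathcal{C}$-subsolution machinery is unnecessary: the symmetric maximum-principle argument at an infimizing sequence gives $0\leq F(A)=h(x,u)\leq h(x,0)+au$ directly.)

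There is, however, one genuine gap: openness of your continuity path at $t=0$. With the path $F(\alpha^{i\bar p}(\chi_{j\bar p}+u_{j\bar p}))=t\,h(x,u)$, the linearization at the solution $u_0\equiv 0$ of the $t=0$ equation is $L(v)=F^{ij}\alpha^{i\bar l}v_{j\bar l}$ with \emph{no} zeroth-order term, since the coefficient $-t\,h_u$ vanishes at $t=0$. This operator annihilates constants, which belong to $\tilde C^{k+\beta}(\Omega)$, so $L$ is not injective and the implicit function theorem cannot be applied there; your remark that ``$-t\,h_u\leq -ta\leq 0$ has the good sign'' glosses over exactly this degeneracy, and unlike the compact normalized setting you cannot quotient by constants because the equation for $t>0$ has no such gauge freedom. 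The paper avoids the problem by building a strictly negative zeroth-order term into the path from the start (the $-\epsilon u$ in $\Psi(u,t)=F(\alpha^{i\bar p}(\chi_{j\bar p}+u_{j\bar p}))-t\,h(x,u)-\epsilon u$ preceding Lemma \ref{t7.3}), so that the linearized coefficient is $\leq-\epsilon<0$ uniformly in $t\in[0,1]$. The fix in your setting is equally simple: use the path $F(\alpha^{i\bar p}(\chi_{j\bar p}+u_{j\bar p}))=t\,h(x,u)+(1-t)\,a\,u$, whose linearization has zeroth-order coefficient $-t\,h_u-(1-t)a\leq -a<0$ throughout, and whose $t=1$ endpoint is the desired equation; the $C^0$-estimate of Theorem \ref{t6.4} and the higher-order estimates apply uniformly along this path since $t\,h+(1-t)a u$ still has $u$-derivative bounded below by $a$. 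A secondary soft spot is your closedness step: quoting Theorem \ref{t4.4} with right-hand side $t\,h(x,u_t(x))$ viewed as a function of $x$ alone is circular at the level of norms; one should instead differentiate the equation as in the paper's Lemma \ref{t7.2} (see the treatment of the term $h_u u_{1\bar 1}$ there), which is a routine but necessary modification.
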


Apply Theorem \ref{t7.4} to Monge-Amp\`{e}re equations on strictly pseudoconvex domains, we obtain:

\begin{theorem}\label{t1.7}
Let $\Omega$ be a strictly pseudoconvex domain in a complete K\"{a}hler manifold $(M,h)$ as in Proposition \ref{pb2.3} with $l=\infty$. Define $g=-\log(-\varphi)$, where $\varphi$ is the defining function of $\Omega$, suppose that $|\partial\bar{\partial}\varphi|_{h}>c>0$. Fix a volume form $V (x)$ on $\Omega$ such that
\begin{equation}\label{e1.12}
|(\sqrt{-1})^n(V(x)-\det(g_{i\bar{j}}))dz^1 \wedge d\bar{z}^1\cdots \wedge dz^n\wedge d\bar{z}^n|_{h}=O(\varphi^{2(1-n)})
\end{equation}
Then there exists a complete K\"{a}hler metric $\alpha$ on $\Omega$ such that $\det(\alpha_{i\bar{j}})=V(x)$.
\end{theorem}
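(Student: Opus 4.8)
The plan is to cast the prescribed–volume equation as the complex Monge--Amp\`ere equation $F(A)=h$ with $f(\lambda)=\log(\lambda_1\cdots\lambda_n)$ on $\Gamma=\Gamma_n$, and to invoke Theorem \ref{t7.4}. I would use the approximate Cheng--Yau K\"ahler form $\chi:=\sqrt{-1}\partial\bar{\partial}g$ both as the reference $(1,1)$-form $\chi$ and as the background Hermitian metric $\alpha:=\chi$. Then $\lambda[\alpha^{i\bar p}\chi_{j\bar p}]=(1,\dots,1)\in\Gamma_n$, $F(\alpha^{i\bar p}\chi_{j\bar p})=\log\det(\mathrm{Id})=0$, and for $u\in C^2(\Omega)$ the equation $F(\alpha^{i\bar p}(\chi_{j\bar p}+u_{j\bar p}))=h(x)$ with
\[
h(x):=\log\frac{V(x)}{\det(g_{i\bar j})}
\]
is exactly $\det(\chi_{i\bar j}+u_{i\bar j})=V(x)$; the sought metric will be $\alpha_{\mathrm{new}}:=\chi+\sqrt{-1}\partial\bar{\partial}u=\sqrt{-1}\partial\bar{\partial}(g+u)$, which is automatically closed. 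Here $h$ is independent of $u$.

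To apply Theorem \ref{t7.4} I would verify its hypotheses in turn. By Proposition \ref{pb2.3} with $l=\infty$ one may take $\varphi$ to be a bounded, negative, strictly plurisubharmonic defining function with $\varphi\to 0$ at infinity, so that $\varphi$ itself is a bounded global K\"ahler potential in the sense of Definition \ref{d1.1} and $g=-\log(-\varphi)$ is a complete K\"ahler metric of bounded geometry of all orders; hence $\alpha=\chi=\sqrt{-1}\partial\bar{\partial}g$ has bounded geometry of order $k-1$ for every $k$, and $\chi$ is trivially uniformly equivalent to $\alpha$. The structure conditions \textbf{(a1)}--\textbf{(a5)} are the standard ones for the Monge--Amp\`ere operator on $\Gamma_n$: $\Gamma_n$ is an open symmetric convex cone, $f$ is symmetric, smooth, concave and increasing ($f_i=1/\lambda_i>0$), $\sup_{\partial\Gamma_n}f=-\infty$, $\lim_{t\to\infty}f(t\mu)=+\infty=\sup_{\Gamma_n}f$, and $h_u\equiv 0\ge 0$; the only nonformal points, $\inf_\Omega h>-\infty$ and $\sup_\Omega h<+\infty$, both follow from \eqref{e1.12}. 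Finally $0$ is a $\mathcal{C}$-subsolution: for the Monge--Amp\`ere operator $\lim_{t\to\infty}f(\lambda+t\mathbf{e}_i)=+\infty$ for all $i$ and all $\lambda\in\Gamma_n$, so the condition in Definition \ref{d4.1} holds automatically at each point (here $\lambda=(1,\dots,1)$), and $F(\alpha^{i\bar p}\chi_{j\bar p})=0$ as noted.

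The heart of the matter is the remaining hypothesis of Theorem \ref{t7.4}: that $h\in\tilde{C}^{k-2+\beta}(\Omega)$, and that the $C^0$-estimate condition $|h(x)|=O\big(\lambda_{\min}(\alpha^{a\bar l}\varphi_{b\bar l})\big)$ holds with $\varphi$ the above potential. I would treat this through the identity $g_{i\bar j}=-\varphi_{i\bar j}/\varphi+\varphi_i\varphi_{\bar j}/\varphi^2$. First, strict pseudoconvexity together with $|\partial\bar{\partial}\varphi|_h>c>0$ yields the two-sided bound $\det(g_{i\bar j})\asymp|\varphi|^{-(n+1)}\det(h_{i\bar j})$ near $\partial\Omega$, so that writing $h(x)=\log\!\big(1+(V(x)-\det g_{i\bar j})/\det g_{i\bar j}\big)$ and inserting \eqref{e1.12} controls $|h(x)|$ near the boundary. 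Next, from $\varphi_{i\bar j}=-\varphi(g_{i\bar j}-g_ig_{\bar j})$ the eigenvalues of $\alpha^{a\bar l}\varphi_{b\bar l}=g^{a\bar l}\varphi_{b\bar l}$ are $-\varphi$ with multiplicity $n-1$ and $-\varphi(1-|\partial g|^2_g)$ once, the latter being the smallest; estimating $|\partial g|^2_g=|\partial\varphi|^2_g/\varphi^2$ near $\partial\Omega$ pins down its decay rate, and one checks that \eqref{e1.12} is exactly the calibration making $|h(x)|=O(\lambda_{\min}(\alpha^{a\bar l}\varphi_{b\bar l}))$. The membership $h\in\tilde{C}^{k-2+\beta}(\Omega)$ then follows from the regularity of $V$, the bounded geometry of $g$, and the same near-boundary estimates. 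Once every hypothesis is in place, Theorem \ref{t7.4} produces $u\in\tilde{C}^{k+\beta}(\Omega)$ for every $k$ (hence $u\in C^\infty$) with $\alpha_{\mathrm{new}}=\chi+\sqrt{-1}\partial\bar{\partial}u$ uniformly equivalent to the complete metric $\sqrt{-1}\partial\bar{\partial}g$; being closed and positive, $\alpha_{\mathrm{new}}$ is a complete K\"ahler metric with $\det(\alpha_{\mathrm{new},\,i\bar j})=V(x)$, which is the metric $\alpha$ of the statement. I expect the main obstacle to be precisely this $C^0$-estimate step: matching the blow-up rate of $\det(g_{i\bar j})$ and the decay rate of $\lambda_{\min}(g^{a\bar l}\varphi_{b\bar l})$ at $\partial\Omega$ against the decay of $V-\det(g_{i\bar j})$ encoded in \eqref{e1.12}; the bounded-geometry bookkeeping for $h$, and the fact that $g=-\log(-\varphi)$ has bounded geometry of all orders and is complete (Cheng--Yau, Proposition \ref{pb2.3}), are more routine but still need care.
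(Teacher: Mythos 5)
Your proposal is correct and follows essentially the same route as the paper: the paper also reduces to the Monge--Amp\`ere case of Theorem \ref{t7.4} (via Corollary \ref{c77.1}, with $g_{i\bar j}$ in place of $\alpha_{i\bar j}$), rewrites \eqref{e1.12} as $V=e^{\tilde h}\det(g_{i\bar j})$ with $|\tilde h|=O(\varphi^2)$, and matches this against the lower bound $\underset{\min}{\lambda}(g^{a\bar l}\varphi_{b\bar l})\gtrsim\varphi^2$ coming from \eqref{e6.26}. Your explicit eigenvalue computation of $g^{a\bar l}\varphi_{b\bar l}$ is just a more hands-on version of that same calibration.
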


The above corollary can be used to find K\"{a}hler metrics with some prescribed volume forms on strictly pseudoconvex domains.

Denote the first Chern class of a K\"{a}hler manifold $M$ by $c_1(M)$. The existence of the K\"{a}hler-Einstein metrics on compact K\"{a}hler manifolds has been established in all cases: When $c_1(M)<0$, see Aubin \cite{Aubin76} and Yau \cite{Yau78}; When $c_1(M)=0$, see Yau \cite{Yau78}; When $c_1(M)>0$, see Chen-Donaldson-Sun \cite{CDS15a,CDS15b,CDS15c} and Tian \cite{Tian97,Tian15}. We obtain the following theorem on noncompact K\"{a}hler manifold:

\begin{theorem}\label{t1.8}
Let $(M,\alpha)$ be a complete K\"{a}hler manifold with negative Ricci curvature bounded from below by a negative
constant ($0>{\rm Ric}(\alpha)\geq c\alpha$ for some constant $c<0$). Then, for any compact subset $K\subset M$, there exists a K\"{a}hler-Einstein metric on $K$. Moreover, this metric can be extended to some open set $N\subset M$ and forms a complete Hermitian metric on $N$.
\end{theorem}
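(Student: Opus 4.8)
The plan is to recast the Kähler--Einstein equation ${\rm Ric}(\omega)=-\omega$ on a complete neighbourhood of $K$ as a complex Monge--Ampère equation of the type \eqref{e7.33} and then apply Theorem \ref{t7.5}; using Theorem \ref{t7.5} rather than Theorem \ref{t7.4} is what keeps the argument clean, since the resulting $h$ will satisfy $h_u\equiv 1>0$, so the delicate growth hypothesis \eqref{en6.2} is not needed. First I would fix a relatively compact strictly pseudoconvex domain $N$ with smooth boundary and $K\subset N\subset\subset M$, with defining function $\rho<0$ (if $M$ is compact one takes $N=M$ and the conclusion is the Aubin--Yau theorem, so we may assume $M$ noncompact); here the hypotheses $0>{\rm Ric}(\alpha)\geq c\alpha$ on $M$ are used to guarantee that $-{\rm Ric}(\alpha)$ is a Kähler form and to supply the uniform bounds on $\overline{N}$ invoked below. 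By the construction behind Theorem \ref{t1.7} and Proposition \ref{pb2.3} (with $l=\infty$), the strict pseudoconvexity of $N$ yields a complete Kähler metric $\chi$ on $N$ with bounded geometry of order $k-1$ whose de Rham class is $-c_1(N)$ (the anticanonical class), modelled near $\partial N$ on a multiple of $\sqrt{-1}\partial\bar{\partial}(-\log(-\rho))$.

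Next I would take $f=\log\sigma_n$ on $\Gamma=\Gamma_n$, so that $F(A)=\log\det A$ and \eqref{e7.33} reads $\log\!\big(\det(\chi_{i\bar j}+u_{i\bar j})/\det\chi_{i\bar j}\big)=h(x,u)$ for $\omega=\chi+\sqrt{-1}\partial\bar{\partial}u$. Conditions \emph{\textbf{(a1)}}--\emph{\textbf{(a4)}} hold trivially: $\Gamma_n$ is the positive orthant, $f=\sum_i\log\lambda_i$ is symmetric, smooth, concave and increasing, $\sup_{\partial\Gamma_n}f=-\infty$, and $\lim_{t\to\infty}f(t\mu)=+\infty=\sup_{\Gamma_n}f$. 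Since the Ricci form ${\rm Ric}(\chi)$ represents $c_1(N)$ while $[\chi]=-c_1(N)$, the closed $(1,1)$-form ${\rm Ric}(\chi)+\chi$ is de Rham exact, and the strict pseudoconvexity of $N$ lets me write ${\rm Ric}(\chi)+\chi=\sqrt{-1}\partial\bar{\partial}F_2$ with $F_2\in\tilde{C}^{k-2+\beta}(N)$; by the boundary asymptotics ${\rm Ric}(\chi)+\chi$ is to leading order a fixed negative multiple of $\sqrt{-1}\partial\bar{\partial}(-\log(-\rho))$, so $F_2$ may be chosen with $F_2\to-\infty$ at $\partial N$, hence bounded above on $N$. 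Setting $h(x,u):=u+F_2(x)$ gives $h_u\equiv 1>0$, $h\in\tilde{C}^{k-2+\beta}(N\times\mathbb{R})$, and $\sup_N h(x,0)=\sup_N F_2<\infty=\sup_{\Gamma_n}f$, so \emph{\textbf{(a5)}} holds. Taking $\chi$ itself as the background Hermitian metric, $F(\chi^{i\bar p}\chi_{j\bar p})=F({\rm Id})=\log\det{\rm Id}=0$, and $0$ is a $\mathcal{C}$-subsolution of \eqref{e7.33} in the sense of Definition \ref{d4.1}, since $\chi^{i\bar p}\chi_{j\bar p}={\rm Id}$ has eigenvalues $(1,\dots,1)\in\Gamma_n$ and $f\big((1,\dots,1)+t\textbf{e}_i\big)=\log(1+t)\to\infty$.

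Then Theorem \ref{t7.5} yields $u\in\tilde{C}^{k+\beta}(N)$ with $\omega:=\chi+\sqrt{-1}\partial\bar{\partial}u$ uniformly equivalent to $\chi$; in particular $\omega$ is a complete Kähler metric on $N$. From $\log\det\omega=\log\det\chi+u+F_2$ and the choice of $F_2$,
\[
{\rm Ric}(\omega)={\rm Ric}(\chi)-\sqrt{-1}\partial\bar{\partial}(u+F_2)=-\chi-\sqrt{-1}\partial\bar{\partial}u=-\omega,
\]
so $\omega$ is Kähler--Einstein with Einstein constant $-1$; since $h$ is smooth in $(x,u)$, elliptic regularity upgrades $\omega$ to a smooth metric. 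The restriction of $\omega$ to $K\subset N$ is then the desired Kähler--Einstein metric, and $(N,\omega)$ realises it as a complete metric on the open set $N\supset K$, which is the claimed extension.

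The step I expect to be the main obstacle is the construction of the reference geometry on $N$: producing the complete Kähler metric $\chi$ of bounded geometry of the prescribed order in the anticanonical class, together with the potential $F_2$ that is bounded above and of the required Hölder regularity with respect to $\chi$. This is exactly where strict pseudoconvexity (non-degeneracy of $\partial N$), the boundary behaviour of $\sqrt{-1}\partial\bar{\partial}(-\log(-\rho))$-type metrics, and a $\partial\bar{\partial}$-lemma on $N$ enter, i.e.\ the toolkit already assembled for Theorem \ref{t1.7}. A secondary point to pin down is the existence of a smoothly bounded strictly pseudoconvex $N$ with $K\subset N\subset\subset M$ and the precise way the global curvature hypotheses on $M$ are used there.
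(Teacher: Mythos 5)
Your approach diverges substantially from the paper's, and it has genuine gaps. The paper does not use strictly pseudoconvex domains at all: it uses the lower Ricci bound to invoke Shi's smooth exhaustion function $\rho$ (uniformly equivalent to the distance function), takes $N=U_{\rho_0}$ to be a sublevel set containing $K$, and replaces the reference metric by the Lee--Tam conformal deformation $h_0=-e^{2F}{\rm Ric}(\alpha)$ of the negative Ricci form, where $F=\mathfrak{F}(\rho/\rho_0)$ vanishes on an inner region containing $K$ and blows up at $\partial U_{\rho_0}$. This makes $h_0$ complete with bounded geometry of infinite order on $U_{\rho_0}$, while the right-hand side $h=\log(\det\alpha/\det(-{\rm Ric}(\alpha)))$ is a globally defined smooth function, bounded with all derivatives on the compact closure $\overline{U_{\rho_0}}$, so Corollary \ref{c8.2} applies directly. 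The price is that $h_0=-{\rm Ric}(\alpha)$ only on the inner region $\{\rho<(1-\kappa+\kappa^2)\rho_0\}$, which is exactly why the theorem claims the Einstein equation only on $K$ and merely a complete Hermitian metric on $N$.

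The concrete gaps in your version are these. First, the existence of a smoothly bounded, relatively compact, strictly pseudoconvex $N$ with $K\subset N\subset\subset M$ is not available on a general complete K\"ahler manifold and is nowhere established in the paper; this is not a ``secondary point'' but the load-bearing step, and the paper's route is designed precisely to avoid needing it. Second, even granting such an $N$, your potential $F_2$ with ${\rm Ric}(\chi)+\chi=\sqrt{-1}\partial\bar{\partial}F_2$ and $F_2\to-\infty$ at $\partial N$ cannot lie in $\tilde{C}^{k-2+\beta}(N)$: by \eqref{eb2.18} that norm controls $\sup|F_2|$, so the hypothesis $h\in\tilde{C}^{k-2+\beta}(N\times\mathbb{R})$ of Theorem \ref{t7.5} fails and the theorem cannot be invoked. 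Producing an anticanonical representative $\chi$ of bounded geometry together with a potential $F_2$ that is bounded with bounded derivatives relative to the complete metric near $\partial N$ is exactly the hard Cheng--Yau asymptotic analysis (the situation of \eqref{e8.4}), which the paper's conformal trick sidesteps. A useful sanity check: your argument, if it worked, would yield a K\"ahler--Einstein metric on all of $N$ --- strictly stronger than the theorem's conclusion --- and exhausting $M$ by such domains would essentially give a complete K\"ahler--Einstein metric on $M$, a far deeper statement that the paper deliberately does not claim.
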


A related problem, the existence of Hesse-Einstein metrics ($\beta(g)=\lambda g$, see Section 8) on compact affine K\"{a}hler manifolds (Hessian manifolds) with negative first Chern class, was solved by Cheng-Yau \cite{CY82}. Puechmorel-T{\^o} \cite{PT23} used geometric flow to provide another method for constructing the above metrics. For more studies on affine differential geometry, see (\cite{CY77, HW17, LYZ05}).
In the following, we use the a priori estimates for \eqref{e1.1} to solve the real Monge-Amp\`{e}re equations and construct the Hesse-Einstein metrics on certain noncompact affine manifolds.
\begin{theorem}\label{t1.9}
Let $(\Omega,g)$ be a complete Hessian manifold with bounded geometry of infinity order. Suppose that the second Koszul form of $(\Omega,g)$ is bounded from below by a positive constant ($\kappa(g)>cg$ for some constant $c>0$), then $M$ admits a complete Hesse-Einstein metric.
\end{theorem}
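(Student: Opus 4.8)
The plan is to convert the Hesse--Einstein problem on $(\Omega,g)$ into a complex Monge--Amp\`ere equation falling under Theorem \ref{t7.5}, by complexifying a suitably chosen Hessian metric. The first observation is that the second Koszul form $\kappa(g)$ is itself a Hessian metric on the flat affine manifold $\Omega$: in affine coordinates $\kappa(g)_{ij}=\partial_i\partial_j\log\det(g_{kl})$, so on each affine chart it is the real Hessian of $\log\det(g_{kl})$, and it is a globally defined symmetric $2$-tensor because it transforms exactly as $g$ does under affine changes of coordinates. The hypothesis $\kappa(g)>cg$ with $c>0$ makes $\kappa(g)$ positive definite, and since $g$ has bounded geometry of infinite order the function $\log\det(g_{kl})$ has bounded affine derivatives of all orders, so $\kappa(g)$ satisfies $cg\le\kappa(g)\le Cg$ and also has bounded geometry of infinite order; in particular $(\Omega,\kappa(g))$ is a complete Hessian manifold. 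Complexifying $(\Omega,\kappa(g))$ in the standard way --- passing to the tangent bundle $T\Omega$ (or a torus-bundle quotient) equipped with the complex structure induced by the flat connection and with the K\"ahler metric $\omega$ associated with $\kappa(g)$, cf.\ Cheng--Yau --- produces a complete K\"ahler, hence Hermitian, manifold $(M,\omega)$ of bounded geometry of infinite order, carrying a free holomorphic isometric action along the fibres under which functions pulled back from $\Omega$ are invariant; for such invariant functions the affine Hessian on $\Omega$ corresponds, up to a fixed positive constant factor, to $\sqrt{-1}\partial\bar\partial$ on $M$.

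On $M$ I would consider the equation
\begin{equation*}
F\bigl(\omega^{i\bar p}(\omega_{j\bar p}+u_{j\bar p})\bigr)=u+\psi_1,\qquad F(A)=\log\det A,\quad f(\lambda)=\textstyle\sum_i\log\lambda_i,\quad \Gamma=\Gamma_n,
\end{equation*}
where $\psi_1:=\log\det(g_{ij})-\log\det(\kappa(g)_{ij})$ --- a globally defined function on $\Omega$, since $\log\det(g_{ij})$ and $\log\det(\kappa(g)_{ij})$ have the same transition constants --- lifted to $M$. The reason for this particular equation is that any solution descends to a Hesse--Einstein metric: on $\Omega$ it reads $\log\det(\kappa(g)_{ij}+u_{ij})=u+\log\det(g_{ij})$, which forces $\kappa(g)+\Hess u>0$ and, upon taking the affine Hessian of both sides, $\Hess\log\det(\kappa(g)_{ij}+u_{ij})=\Hess u+\kappa(g)$; that is, $\kappa(\tilde g)=\tilde g$ for $\tilde g:=\kappa(g)+\Hess u$. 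So it suffices to solve the displayed equation on $M$ and descend.

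To apply Theorem \ref{t7.5} I take $\alpha=\chi=\omega$. Conditions \textbf{(a1)}--\textbf{(a4)} are the standard properties of $\log\det$ on the positive orthant $\Gamma_n$: symmetric, smooth, concave, increasing, with $\sup_{\partial\Gamma_n}f=-\infty$ (so the $u$-dependent alternative in \textbf{(a3)} holds) and $\lim_{t\to\infty}f(t\mu)=+\infty=\sup_{\Gamma_n}f$. For \textbf{(a5)}, $h_u\equiv1\ge 0$ and $\sup_M h(x,0)=\sup_\Omega\psi_1\le -n\log c<\infty$, because $\kappa(g)>cg$ forces $\det(\kappa(g)_{ij})>c^n\det(g_{ij})$; this is exactly where the hypothesis on the second Koszul form enters. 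Furthermore $h_u\equiv1\ge a:=1>0$; $F(\omega^{i\bar p}\omega_{j\bar p})=\log\det(\mathrm{Id})=0$; $\chi=\omega$ is trivially uniformly equivalent to $\alpha=\omega$ and of bounded geometry of infinite order; $h=u+\psi_1\in\tilde{C}^{\infty}(M\times\mathbb{R})$ because $\psi_1\in\tilde{C}^{\infty}(\Omega)$; and $u\equiv 0$ is a $\mathcal{C}$-subsolution, since the eigenvalues of $\omega^{i\bar p}\omega_{j\bar p}$ are $(1,\dots,1)\in\Gamma_n$ and $\lim_{t\to\infty}f\bigl((1,\dots,1)+t\textbf{e}_i\bigr)=+\infty$. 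Hence Theorem \ref{t7.5}, applied for each $k$ and bootstrapped, gives a solution $u\in\tilde{C}^{\infty}(M)$ with $\omega+\sqrt{-1}\partial\bar\partial u$ uniformly equivalent to $\omega$.

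Finally I descend $u$ to $\Omega$. The fibrewise action preserves $\omega$ and $\psi_1$, so every translate $u\circ\tau$ solves the same equation with $\omega+\sqrt{-1}\partial\bar\partial(u\circ\tau)$ uniformly equivalent to $\omega$, and $u$ is bounded (the $C^{0}$-bound being part of Theorem \ref{t7.5}). Applying the generalized maximum principle on the complete bounded-geometry manifold $M$ to $u-u\circ\tau$, together with the ellipticity of $F$ and $h_u\ge 1>0$, forces $u\circ\tau=u$ for every $\tau$, so $u$ descends to $u\in\tilde{C}^{\infty}(\Omega)$. Then $\tilde g=\kappa(g)+\Hess u$ is uniformly equivalent to $\kappa(g)$, hence to $g$, hence a complete metric, and by the computation above it is Hesse--Einstein, which proves the theorem. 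The step I expect to be the main obstacle is the first one: setting up the Hessian-to-affine-K\"ahler dictionary for a possibly nontrivial complete Hessian manifold and verifying that completeness and bounded geometry of infinite order genuinely transfer to $(M,\omega)$ --- and that $\kappa(g)$ is a bona fide complete Hessian metric of bounded geometry --- together with making the descent argument rigorous; the analytic heart of the matter is entirely absorbed into Theorem \ref{t7.5}.
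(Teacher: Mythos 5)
Your overall strategy is the paper's: identify $\kappa(g)$ (equivalently $-\beta(g)=2\kappa(g)$ in the paper's normalization) as a positive-definite Hessian background metric, write the Hesse--Einstein condition as a real Monge--Amp\`ere equation $\det(\kappa(g)_{ij}+u_{ij})=e^{u}\frac{\det g}{\det\kappa(g)}\det(\kappa(g)_{ij})$, complexify via the tangent bundle $(T\Omega,J_\nabla,g^T)$, and invoke the complex theory (Theorem \ref{t7.5} / Corollary \ref{c8.2}). Your verification of \textbf{(a1)}--\textbf{(a5)}, of $F(\alpha^{i\bar p}\chi_{j\bar p})=0$, of the $\mathcal{C}$-subsolution property of $0$, and the final computation showing $\beta(\tilde g)=\lambda\tilde g$ all match the paper (up to the harmless normalization of $\kappa$ and the factor relating $\partial\bar\partial(u\circ\pi)$ to $\nabla du$, which the paper tracks by lifting $u$ to $4u\circ\pi$ in \eqref{e99.5}).

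The genuine gap is the descent step. You solve the complex equation on $T\Omega$ and then argue that the solution is fibrewise constant by applying uniqueness to $u-u\circ\tau$ for a ``free holomorphic isometric action along the fibres.'' But a vertical translation $(p,v)\mapsto(p,v+w(p))$ is holomorphic for $J_\nabla$ only if $\nabla w=0$: in the coordinates $z^j=\xi^j+\sqrt{-1}\,\xi^{n+j}$ the map is $z^j\mapsto z^j+\sqrt{-1}\,w^j(x)$, which is holomorphic precisely when $w^j$ is constant in every affine chart. A general complete flat affine manifold with nontrivial linear holonomy admits no nonzero parallel vector fields, so the transitive family of translations $\tau$ you need does not exist (nor does a parallel lattice for a torus-bundle quotient); your argument works for $\Omega\subseteq\mathbb{R}^n$ or simply connected $\Omega$ but not in the stated generality. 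The paper avoids this entirely by never solving upstairs: in Corollary \ref{c9.1} the lift $u\circ\pi$ of a \emph{given} real solution is used only to import the a priori estimates of Sections 4--6 (via the generalized maximum principle on $T\Omega$), and the continuity method is then run directly on $\Omega$ in the affine category, where openness and closedness are proved exactly as in Section 6. Reorganizing your proof this way --- estimates upstairs, existence downstairs --- closes the gap; the analytic content you supplied is otherwise correct.
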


The rest of this paper is organized as follows. In Section 2, we provide some preliminaries which may be used later.
In Section 3, we construct some complete metrics with bounded geometry. In Section 4, we establish the $C^0$ estimates. In Section 5, we prove theorem \ref{t4.4}. In Section 6, we solve the equation \eqref{e1.1}. In Section 7, we discuss the complex Monge-Amp\`{e}re equations and some geometric applications. In Section 8, we consider some applications of \eqref{e1.1} in affine geometry. We write an outline at the beginning of each section to introduce some key ideas.

\textbf{Acknowledgement.} I would like to thank Professor Wang Zhizhang for helpful
comments and discussions about the K\"{a}hler manifolds with bounded global K\"{a}hler potential. And I wish to thank Professor Jiao Heming suggesting him to read \cite{CY80}, \cite{STW17} and to study the $(n-1)$ Monge-Amp\`{e}re equations on noncompact Hermitian manifolds.

\section{Preliminaries}

In this section, we provide some basic results and preliminaries which may be used in the following sections. Some notations and conventions used in this paper will also be introduced at various points throughout this section.

$\bullet$
\begin{minipage}[t]{0.95\linewidth}
Let $(M,g)$ be a Hermitian manifold. Then \emph{Chern connection} of $g$ is defined as follows:  In local holomorphic
coordinates $z^i$, for a $T^{1,0}$ vector field $X=X^i \partial_i$ and a $T^{0,1}$ vector field $Y=Y^{\bar{i}} \partial_{\bar{i}}$, where $\partial_i:=\frac{\partial}{\partial z^i}$, $\partial_{\bar{i}}:=\frac{\partial}{\partial \bar{z}^i}$,
\[\nabla_k X^i=\partial_k X^i+\Gamma_{jk}^i X^j,\;\;\;\nabla_{\bar{k}}X^i=\partial_{\bar{k}}X^i\]
\[\nabla_k Y^{\bar{i}}=\partial_k Y^{\bar{i}},\;\;\;\nabla_{\bar{k}}Y^{\bar{i}}=\partial_{\bar{k}}Y^{\bar{i}}+\overline{\Gamma_{jk}^i}Y^{\bar{j}}\]
For a $(1,0)$ form $a=a_i {\rm d}z^i$ and a $(0,1)$ form $b_{\bar{i}}{\rm d}\bar{z^i}$,
\[\nabla_k a_i=\partial_k a_i-\Gamma_{ik}^j a_j,\;\;\;\nabla_{\bar{k}}a_i=\partial_{\bar{k}}a_i\]
\[\nabla_k b_{\bar{i}}=\partial_k b_{\bar{i}},\;\;\;\nabla_{\bar{k}}b_{\bar{i}}=\partial_{\bar{k}}b_{\bar{i}}-\overline{\Gamma^j_{ik}}b_{\bar{j}}\]
Here $\nabla_i:=\nabla_{\partial_i}$, and $\Gamma_{jk}^i$ are Christoffel symbols, with
\[\Gamma_{jk}^i=g^{i\bar{l}}\partial_j g_{k\bar{l}},\]
where $g^{i\bar{l}}g_{j\bar{l}}=\delta_{ij}$,
\[\delta_{ij}=\left\{ \begin{aligned}
&0, &\;\;\;i\neq j\\
&1,&\;\;\;i=j\\
\end{aligned} \right.\]
\end{minipage}

$\bullet$ The tensor operations in this paper follow the \emph{Einstein summation convention}.

$\bullet$
\begin{minipage}[t]{0.95\linewidth}
We extend covariant derivatives to act naturally on any type of tensor. For example, if $W$ is a tensor with components $W_{k}^{i\bar{j}}$, then define
\[\nabla_m W_{k}^{i\bar{j}}=\partial_m W_{k}^{i\bar{j}}+\Gamma_{lm}^i W_{k}^{l\bar{j}}-\Gamma_{km}^l W_{l}^{i\bar{j}}\]
\[\nabla_{\bar{m}}W_{k}^{i\bar{j}}=\partial_{\bar{m}}W_{k}^{i\bar{j}}+\overline{\Gamma_{lm}^j}W_{k}^{i\bar{l}}\]
\end{minipage}

$\bullet$
\begin{minipage}[t]{0.95\linewidth}
The Hermitian metric $g$ defines a pointwise norm $|\cdot|_g$ on any tensor. For example, with $X,Y,a,b$ as above, we define
\[|X|_g^2=g_{i\bar{j}}X^i\overline{X^j},\;\;|Y|_g^2=g_{i\bar{j}}Y^{\bar{j}}\overline{Y^{\bar{i}}},\;\;|a|_g^2=g^{i\bar{j}}a_i\overline{a_j},\;\;|b|_g^2=g^{i\bar{j}}b_{\bar{j}}\overline{b_{\bar{i}}}.\]
This is extended to any type of tensor. For example, with $W$ as above, we define
\[|W|^2_g=g^{k\bar{l}}g_{i\bar{j}}g_{p\bar{q}}W_k^{i\bar{q}}\overline{W_l^{j\bar{p}}}.\]
\end{minipage}

$\bullet$
\begin{minipage}[t]{0.95\linewidth}
Noted that the Chern connection is a connection such that $\nabla g=\nabla J=0$. The associated \emph{Chern curvature tensor} for the metric $g$ is then defined as
\[R_{i\bar{j}k}^{\;\;\;\;\;l}=-\partial_{\bar{j}}\Gamma_{ik}^l.\]
The Chern-Ricci curvature is defined by
\[R_{i\bar{j}}=g^{k\bar{l}}R_{i\bar{j}k\bar{l}}=-\partial_i\partial_{\bar{j}}\log\det g.\]
Note that if $g$ is not K\"{a}hler, then $R_{i\bar{j}}$ may not equal to $g^{k\bar{l}}R_{k\bar{l}i\bar{j}}$.
\end{minipage}

$\bullet$
\begin{minipage}[t]{0.95\linewidth}
For two real $(1,1)$-forms $\chi$ and $\chi'$ on $(M,g)$, we say that $\chi$ is uniformly equivalent to $\chi'$ if and only if
\[c^{-1}\chi'<\chi<c\chi'\]
for some constant $c>0$.
\end{minipage}

$\bullet$
\begin{minipage}[t]{0.95\linewidth}
Suppose that $F$ is a function defined by
\[F(B)=f(\lambda(B))\]
for a Hermitian matrix $B=\{B_{i\bar{j}}\}$ with $\lambda(B)\in \Gamma$, throughout the paper we shall use the notation
\[F^{ij}(B)=\frac{\partial F}{\partial B_{i\bar{j}}},\;\;\;\;\;F^{ij,kl}(B)=\frac{\partial^2 F}{\partial B_{i\bar{j}}\partial B_{k\bar{l}}},\]
where $B_{i\bar{j}}$ is a complex variable. If $f$ satisfies \textbf{(a1)} and \textbf{(a2)}, then $F(B)$ is concave (see for instance Spruck \cite{Spruck05} in the case of matrices with real entries). This means that:
\[F^{ij}(A)(B-A)_{i\bar{j}}\geq F(B)-F(A)\geq F^{ij}(B)(B-A)_{i\bar{j}},\]
for any Hermitian matrices $A$ and $B$ such that $\lambda(A),\lambda(B)\in \Gamma$.
\end{minipage}

$\bullet$
\begin{minipage}[t]{0.95\linewidth}
We denote the minimum and maximum eigenvalues of $B$ by $\underset{\min}{\lambda}(B)$ and $\underset{\max}{\lambda}(B)$, respectively.
\end{minipage}

$\bullet$
\begin{minipage}[t]{0.95\linewidth}
For two real functions $f$ and $h$, we denote $|f|=O(|h|)$ if there exists a constant $C>0$ such that $|f|\leq C|h|$.
\end{minipage}

$\bullet$
\begin{minipage}[t]{0.95\linewidth}
For a complete noncompact Hermitian manifold $(M,g)$, we say $z\rightarrow\infty$ if and only if $d_g(z,z_0)\rightarrow \infty$ for some fix point $z_0\in M$, where $d_g$ denotes the distance function induced by the metric $g$. Let $z_1\in M$ be another point, since $d_g(z,z_1)\geq d_g(z,z_0)-d_g(z_1,z_0)$, this definition is independent of the choice of $z_0$.
\end{minipage}

\section{Canonical metrics}

In this section, we construct some complete metrics with bounded geometry on a class of K\"{a}hler manifolds which admit a bounded global K\"{a}hler potential. With regard to the problem of a priori estimates for elliptic equations on non-compact manifolds, geometric boundedness is important, as this property can help us find a local coordinate system at each point on the manifold such that the coefficients in the equation satisfy certain uniform boundedness conditions under this coordinate system. And on such manifolds, we have the generalized maximum principle (see Corollary \ref{pbb4.3}).

Next, we will introduce some definitions and propositions about complete noncompact complex manifolds. To facilitate subsequent discussions, we introduce the following definition.
\begin{definition}\label{db2.1}
Suppose that $M$ is a complete Hermitian manifold. We say that $M$ has bounded geometry of order $l+\alpha$ if and only if $M$ admits a covering of holomorphic coordinate charts $\{(V,(v^1,\cdots,v^n))\}$ and positive numbers $R,c$ such that:

\hspace*{0.2cm}(i)
\begin{minipage}[t]{0.91\linewidth}
for any $x_0\in M$ there is a coordinate chart $(V,(v^1,\cdots,v^n))$ with $x_0\in V$ and that, with respect to the Euclidean distance $d$ defined by $v^i$-coordinates, $d(x_0,\partial V)>R$;
\end{minipage}

\hspace*{0.1cm}(ii)
\begin{minipage}[t]{0.91\linewidth}
if $(g_{i\bar{j}})$ denote the metric tensor with respect to $(V,(v^1,\cdots,v^n))$, then the components $g_{i\bar{j}}$ of $g$ are uniformly bounded in the standard $C^{l+\alpha}$ norm in $(V,(v^1,\cdots,v^n))$ independent of $V$ and $(\delta_{ij})/c<(g_{i\bar{j}})<c(\delta_{ij})$.
\end{minipage}

Fix a real $(1,1)$-form $\chi$ on $M$, we say that $\chi$ has bounded geometry of order $l$ with respect to $(M,g_{i\bar{j}})$ if and only if: In the coordinate chart $(V,(v^1,\cdots,v^n))$, the components $\chi_{i\bar{j}}$ of $\chi$ are uniformly bounded in the standard $C^{l+\alpha}$ norm in $(V,(v^1,\cdots,v^n))$ independent of $V$.
\end{definition}

Suppose $(M,g_{i\bar{j}})$ is a complete Hermitian manifold having bounded geometry of order $l+\beta$. Let $\{(V,(v^1,\cdots,v^n))\}$ be a family of holomorphic coordinate charts covering $M$ and satisfying the conditions of Definition \ref{db2.1}. Then, for any $u\in C^\infty(M)$, non-negative integer $k,k\leq l$, and $\alpha \in(0,1)$, we define a norm $|u|_{k+\alpha}$ to be
\begin{equation}\label{eb2.18}
\begin{aligned}
|u|_{k+\alpha}=&\sup_V\biggl(\sup_{z\in V}\biggl(\sum_{|\alpha|+|\beta|\leq k}\biggl|\frac{\partial^{|\alpha|+|\beta|}}{\partial v^\alpha \partial \bar{v}^\beta}u(z)\biggl|\biggl)\\
&+\sup_{z,z'\in V}\biggl(\sum_{|\alpha|+|\beta|= k}|z-z'|^{-\alpha}\biggl|\frac{\partial^{|\alpha|+|\beta|}}{\partial v^\alpha \partial \bar{v}^\beta}u(z)-\frac{\partial^{|\alpha|+|\beta|}}{\partial v^\alpha \partial \bar{v}^\beta}u(z')\biggl|\biggl)\biggl).
\end{aligned}
\end{equation}
The completion of $\{u\in C^\infty(M):|u|_{k+\alpha}<\infty\}$ with respect to $|\cdot|_{k+\alpha}$ is then a Banach space and will be denoted by $\tilde{C}^{k+\alpha}(M)$. Define,
\begin{equation}
\tilde{C}^\infty(M)=\bigcap_{k=0}^\infty \tilde{C}^{k+\alpha}(M).
\end{equation}
\textbf{Remark}: If $M$ has bounded geometry of infinity order and admits a covering of holomorphic coordinate charts $\{(V,(v^1,\cdots,v^n))\}$. Then, for any $h\in C^\infty(M\times\mathbb{R})$, we can take $\{(V\times\mathbb{R},(v^1,\cdots,v^n,x))\}$ as the covering of coordinate charts for $M\times\mathbb{R}$, and define $|h|_{k+\alpha}$, $\tilde{C}^{k+\alpha}(M\times\mathbb{R})$ and $\tilde{C}^\infty(M\times\mathbb{R})$ similarly.

On the manifolds with bounded geometry, we have the generalized maximum principle (see \cite{CY80}), which is our main tool to derive the a priori estimates. For completeness, we state this in the following proposition.
\begin{proposition}\label{pb4.2}
Suppose $(M,g_{i\bar{j}})$ is a complete K\"{a}hler manifold. Suppose that, for any $x\in M$, there is an open set $D^x$ containing $x$ and non-negative function $\varphi^x:\overline{D^x}\rightarrow \mathbb{R}$ such that {\rm (i)} $\overline{D^x}$ is compact, {\rm (ii)} $\varphi^x_{(x)}=1$ and $\varphi^x=0$ on $\partial D^x$, {\rm (iii)} $\varphi^x\leq c$, $|\nabla \varphi^x|\leq c$ and $(\varphi_{i\bar{j}}^x)\geq -c(g_{i\bar{j}})$, where $c$ is a positive constant independent of $x$. Suppose $f$ is a function on $M$ which is bounded from above. Then there exists a sequence $\{x_i\}$ in $M$ such that $\lim f(x_i)=\sup f$, $\lim|df(x_i)|=0$ and $\overline{\lim}(f_{p\bar{q}}(x_i))\leq 0$, where the Hessian is taken with respect to $(g_{i\bar{j}})$.
\end{proposition}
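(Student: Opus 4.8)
The plan is to run a perturbation argument that pushes a near-maximizer of $f$ into the interior of a barrier domain. Write $S:=\sup_M f<\infty$. Since $f$ need not attain $S$, for each small $\delta>0$ I would pick $y\in M$ with $f(y)>S-\delta^2$, and add to $f$ a small multiple of the bump function $\varphi^y$ supplied by hypotheses (i)--(iii): this concentrates the maximum of the perturbed function strictly inside $D^y$, and the first- and second-order conditions at that interior maximum yield the desired gradient and Hessian bounds for $f$, with errors controlled by the perturbation parameter and the universal constant $c$. (The Hessian conclusion of course presupposes $f\in C^2$, which I take as understood.)

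Concretely, fix parameters with $\varepsilon>\delta^2>0$, choose $y$ as above, and on the compact set $\overline{D^y}$ consider $G:=f+\varepsilon\varphi^y$. On $\partial D^y$ we have $G=f\le S$ since $\varphi^y=0$ there, whereas $G(y)=f(y)+\varepsilon>S-\delta^2+\varepsilon>S$; hence the continuous function $G$ attains its maximum over $\overline{D^y}$ at an interior point $x_\ast\in D^y$. At $x_\ast$ the first-order condition gives $df(x_\ast)=-\varepsilon\, d\varphi^y(x_\ast)$, so $|df(x_\ast)|\le \varepsilon c$ by (iii). The second-order condition says the real Hessian of $G$ at $x_\ast$ is negative semidefinite; restricting $G$ to a holomorphic disc $\gamma:\zeta\mapsto x_\ast+\zeta v$ and using $\Delta_\zeta(G\circ\gamma)(0)\le 0$ shows the complex Hessian $(G_{p\bar q}(x_\ast))$ is negative semidefinite, hence $(f_{p\bar q}(x_\ast))\le-\varepsilon(\varphi^y_{p\bar q}(x_\ast))\le \varepsilon c\,(g_{p\bar q})$, again by (iii). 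Finally, since $0\le\varphi^y\le c$, we get $f(x_\ast)=G(x_\ast)-\varepsilon\varphi^y(x_\ast)\ge G(y)-\varepsilon c>S-\delta^2-(c-1)\varepsilon$ (enlarging $c$ if necessary so that $c\ge1$), so that $S-\delta^2-(c-1)\varepsilon<f(x_\ast)\le S$.

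To finish, I would iterate this over a sequence of scales: take $\varepsilon_i=1/i$ and $\delta_i^2=1/(2i)$, so $\varepsilon_i>\delta_i^2$, and let $x_i:=x_\ast$ be the point produced at scale $i$. Then $|df(x_i)|\le c/i\to0$, $(f_{p\bar q}(x_i))\le (c/i)(g_{p\bar q})$, and $S-(2c-1)/(2i)<f(x_i)\le S$, so $f(x_i)\to S$. The Hessian bound says exactly that for every $\eta>0$ one has $(f_{p\bar q}(x_i))\le\eta(g_{p\bar q})$ for all large $i$, which is the meaning of $\overline{\lim}(f_{p\bar q}(x_i))\le0$.

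The argument is essentially soft once the barriers $\varphi^x$ satisfying (i)--(iii) are available; their construction, which is where completeness really enters, is carried out separately. The only points needing care are the coupling of the two parameters --- the constraint $\varepsilon>\delta^2$ is forced by the need to locate the maximum of $G$ in the interior of $D^y$, and one must check that $\varepsilon,\delta\to0$ can be arranged while keeping all three conclusions valid simultaneously --- and the precise reading of $\overline{\lim}(f_{p\bar q}(x_i))\le0$ as domination by $\eta(g_{i\bar j})$ for arbitrarily small $\eta>0$. I expect this last bookkeeping (an inequality "up to $\eta g$" rather than literal entrywise convergence) to be the only genuinely fiddly part; note that K\"ahler-ness is not actually used beyond what is needed to phrase condition (iii).
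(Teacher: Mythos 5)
Your argument is correct. The paper itself does not prove Proposition \ref{pb4.2} --- it is quoted from Cheng--Yau \cite{CY80} --- but the technique behind it is carried out explicitly in Section 5, where the author takes a sequence $p_i$ with $H(p_i)\to\sup H=L$ and \emph{minimizes the quotient} $(L-\widetilde H)/\varphi^{p_i}$ over $\overline{D^{p_i}}$; the quotient blows up at $\partial D^{p_i}$ because $\varphi^{p_i}$ vanishes there, so the infimum is interior, and the first- and second-order conditions there yield \eqref{eb4.37}--\eqref{eb4.39}, with all error terms proportional to the single quantity $L-H(p_i)$. You instead \emph{maximize the additive perturbation} $f+\varepsilon\varphi^{y}$, with $y$ a $\delta^2$-near-maximizer and the coupling $\varepsilon>\delta^2$ forcing the maximum into the interior of $D^y$. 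Both routes use the barrier in the same three ways (boundary vanishing to trap an interior extremum, the gradient bound $|\nabla\varphi^x|\le c$ for $|df|$, the lower Hessian bound $(\varphi^x_{i\bar j})\ge -c(g_{i\bar j})$ for $(f_{p\bar q})$); the quotient form has the minor advantage of producing estimates in the exact shape used later in Section 5, while your additive form decouples the two small parameters and makes the final bookkeeping ($\varepsilon_i=1/i$, $\delta_i^2=1/(2i)$) transparent. Your treatment of the two delicate points --- that the complex Hessian at an interior maximum is negative semidefinite, obtained by restricting to holomorphic discs, and the reading of $\overline{\lim}(f_{p\bar q}(x_i))\le 0$ as $(f_{p\bar q}(x_i))\le \varepsilon_i c\,(g_{p\bar q})$ with $\varepsilon_i\to 0$ --- is sound, and you are right that $f\in C^2$ must be tacitly assumed and that K\"ahler-ness enters only through the construction of the barriers $\varphi^x$, not through this argument.
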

\begin{corollary}\label{pbb4.3}
Suppose $(M,g_{i\bar{j}})$ is a complete K\"{a}hler manifold with bounded geometry of order $l$, $l\geq 0$. Then the assertion of Proposition \ref{pb4.2} is valid on $(M,g_{i\bar{j}})$.
\end{corollary}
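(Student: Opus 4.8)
The plan is to verify, under the bounded geometry hypothesis, the three structural requirements that Proposition \ref{pb4.2} places on the auxiliary functions $\varphi^x$; once these are in place the corollary is immediate from that proposition. The functions $\varphi^x$ will be written down explicitly in the uniform holomorphic coordinate charts furnished by Definition \ref{db2.1}.

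Fix $R>0$, $c_0>0$ and a covering $\{(V,(v^1,\ldots,v^n))\}$ as in Definition \ref{db2.1}. Given $x_0\in M$, choose a chart $(V,(v^1,\ldots,v^n))$ with $x_0\in V$, $d(x_0,\partial V)>R$ in the Euclidean $v$-metric, and $c_0^{-1}(\delta_{ij})<(g_{i\bar j})<c_0(\delta_{ij})$ on $V$; after a translation assume $v(x_0)=0$, so that the Euclidean ball $B_R=\{|v|<R\}$ lies inside $v(V)$. Set $r=R/2$, let $D^{x_0}=v^{-1}(B_r)$, and define
\[
\varphi^{x_0}=1-r^{-2}|v|^2=1-r^{-2}\sum_{k=1}^n|v^k|^2 \qquad\text{on } \overline{D^{x_0}}.
\]
Since $\overline{B_r}$ is a compact subset of $B_R\subseteq v(V)$ and the chart map is a homeomorphism onto an open subset of $\mathbb{C}^n$, the preimage $v^{-1}(\overline{B_r})$ is a compact subset of $M$ containing $\overline{D^{x_0}}$, so $\overline{D^{x_0}}$ is compact; this gives (i). Clearly $\varphi^{x_0}\geq 0$ on $\overline{D^{x_0}}$, $\varphi^{x_0}(x_0)=1$ and $\varphi^{x_0}\equiv 0$ on $\partial D^{x_0}$, which is (ii). For (iii): $\varphi^{x_0}\leq 1$; one has $\partial_i\varphi^{x_0}=-r^{-2}\overline{v^i}$, so $\sum_i|\partial_i\varphi^{x_0}|^2\leq r^{-2}$ on $\overline{D^{x_0}}$, and together with $c_0^{-1}(\delta_{ij})<(g_{i\bar j})$ this yields $|\nabla\varphi^{x_0}|_g\leq Cc_0^{1/2}r^{-1}$ for a numerical constant $C$; finally, since $\nabla_i\nabla_{\bar j}u=\partial_i\partial_{\bar j}u$ for any function in holomorphic coordinates, $(\varphi^{x_0}_{i\bar j})=-r^{-2}(\delta_{ij})\geq -c_0r^{-2}(g_{i\bar j})$. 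Thus all three bounds in (iii) hold with a single constant depending only on $c_0$, $R$ and $n$.

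The crux is simply that this constant is independent of $x_0$, which is precisely the uniformity encoded in Definition \ref{db2.1}; so there is no genuine difficulty here, only bookkeeping. With the family $\{\varphi^x\}$ constructed above, $(M,g_{i\bar j})$ meets all the hypotheses of Proposition \ref{pb4.2}, and the conclusion of that proposition --- hence of Corollary \ref{pbb4.3} --- follows at once. The two points worth double-checking are the compactness of $\overline{D^{x_0}}$ in $M$ and the identification of the covariant $(1,1)$-Hessian with the ordinary $\partial\bar\partial$-Hessian in these coordinates, both of which are immediate from the definitions recalled in Section 2.
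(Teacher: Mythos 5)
Your construction is correct and is exactly the standard argument (the paper itself states the corollary without proof, deferring to Cheng--Yau): the uniform radius $R$ and the uniform bounds $(\delta_{ij})/c<(g_{i\bar j})<c(\delta_{ij})$ from Definition \ref{db2.1} make the quadratic cutoff $1-r^{-2}|v|^2$ satisfy (i)--(iii) of Proposition \ref{pb4.2} with constants independent of the base point. The two points you flag --- compactness of $\overline{D^{x_0}}$ via the chart homeomorphism, and $\nabla_i\nabla_{\bar j}\varphi=\partial_i\partial_{\bar j}\varphi$ for functions --- are indeed the only things to check, and both are handled correctly.
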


Next, we will consider some manifolds that have bounded geometry of order $l$. Let $\Omega$ be a $C^k$, $k\geq 5$, strictly pseudoconvex domain in $\mathbb{C}^n$. Let $\varphi$ be a $C^k$ defining function for $\Omega$. i.e., $\varphi\in C^k(\bar{\Omega})$, $\varphi=0$ on $\partial \Omega$, $d\varphi\neq 0$ on $\partial \Omega$, $(\varphi_{i\bar{j}})>0$ in $\bar{\Omega}$ and $\Omega=\{\varphi<0\}$. Define $g=-\log(-\varphi)$. Then $g$ is a strictly plurisubharmonic function defined in $\Omega$ and $g(x)\rightarrow \infty$ as $x\rightarrow \partial \Omega$. As easy calculation shows
\begin{equation}
g_{i\bar{j}}=\frac{\varphi_{i\bar{j}}}{-\varphi}+\frac{\varphi_i\varphi_{\bar{j}}}{\varphi^2}
\end{equation}
and
\begin{equation}
g^{i\bar{j}}=(-\varphi)\biggl(\varphi^{i\bar{j}}+\frac{\varphi^i\varphi^{\bar{j}}}{\varphi-|d\varphi|^2}\biggl),
\end{equation}
where $(\varphi^{i\bar{j}})=(\varphi_{i\bar{j}})^{-1}$, $\varphi^i=\sum \varphi^{i\bar{k}}\varphi_{\bar{k}}$, and $|d\varphi|^2=\sum\varphi^{i\bar{j}}\varphi_i\varphi_{\bar{j}}$.

By Cheng-Yau \cite{CY80}, we have Proposition \ref{pb2.2} and \ref{pb2.3}.
\begin{proposition}\label{pb2.2}
Suppose $\Omega$ is a $C^{l+2}$ strongly pseudoconvex domain in $\mathbb{C}^n$. Let $\varphi$ be a defining function of $\Omega$ and $u\in C^{l+2}(\bar{\Omega})$ such that $(u_{i\bar{j}})>0$. Then $(\Omega,-(\log(-\varphi))_{i\bar{j}}+u_{i\bar{j}})$ has bounded geometry of order $l$.
\end{proposition}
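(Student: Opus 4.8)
The plan is to split $\Omega$ into a compact interior region and a boundary collar, and to do the real work only in the collar, where the metric is dominated by the Cheng--Yau part $-\partial\bar\partial\log(-\varphi)$. Fix a small $\epsilon_0>0$. On $\Omega_{\epsilon_0}=\{\varphi\le-\epsilon_0\}$ the function $-\log(-\varphi)+u$ extends smoothly to the compact set $\overline{\Omega_{\epsilon_0}}$, so $(g_{i\bar j})$ is there a continuous positive Hermitian metric; around each point I would use as a chart the Euclidean ball of a fixed small radius (which lies in $\Omega$ once $\epsilon_0$ is small, by strict pseudoconvexity and compactness of $\bar\Omega$), so conditions (i)--(ii) of Definition \ref{db2.1} hold on this piece with uniform constants. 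Everything then reduces to constructing, for each $q\in\Omega$ with $\delta:=-\varphi(q)<\epsilon_0$, a holomorphic chart centered at $q$ satisfying (i)--(ii) with constants independent of $q$.

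Near the boundary the geometry is parabolically anisotropic, and the key device is an anisotropic dilation. Let $p=p(q)\in\partial\Omega$ be the nearest boundary point (well defined and smoothly dependent on $q$ since $\partial\Omega$ is $C^{l+2}$). First I would pass to holomorphic coordinates based at $p$ osculating $\partial\Omega$ to second order, so that $\varphi$ takes the form $\mathrm{Re}(z_n)+|z'|^2+O(|z|^3)$ with $z'=(z_1,\dots,z_{n-1})$; uniform strict pseudoconvexity of the $C^{l+2}$ boundary makes this change and its inverse bounded in $C^{l+1}$ independently of $p$. From $g_{i\bar j}=\frac{\varphi_{i\bar j}}{-\varphi}+\frac{\varphi_i\varphi_{\bar j}}{\varphi^2}$, together with $\varphi_a(q)=O(\delta)$ and $\varphi_n(q)\asymp 1$ along the inward normal, one reads off near $q$ the size estimates $g_{n\bar n}\asymp\delta^{-2}$, $g_{a\bar b}\asymp\delta^{-1}$ and $g_{a\bar n}=O(\delta^{-3/2})$ (indices $a,b$ ranging over $1,\dots,n-1$). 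This dictates the dilation $\Phi_q\colon w\mapsto z$ with $z'=q'+\sqrt\delta\,w'$ and $z_n=q_n+\delta\,w_n$. Using the parabolic form of $\varphi$ near $p$ and a standard rescaling argument based on strict pseudoconvexity, I would check that $\Phi_q(\{|w|<R\})\subset\Omega$ for a fixed $R>0$ and all sufficiently small $\delta$, so $V_q:=\Phi_q^{-1}(\Omega)$ is a holomorphic coordinate chart with $d(0,\partial V_q)>R$; this is condition (i).

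For condition (ii), set $\tilde g_{i\bar j}(w)=(\Phi_q^{*}g)_{i\bar j}(w)$. The dilation multiplies $g_{n\bar n}$ by $\delta^{2}$, $g_{a\bar b}$ by $\delta$ and $g_{a\bar n}$ by $\delta^{3/2}$, so the size estimates give a uniform upper bound for $\tilde g_{i\bar j}$ on $\{|w|<R\}$; for the uniform lower bound I would argue that, as $\delta\to0$, $\tilde g_{i\bar j}$ converges — uniformly in $q$ and in $C^{l+\alpha}$ — to the correspondingly rescaled complex-hyperbolic (Siegel-domain) model metric, which is a fixed smooth positive metric, so that for $\delta<\epsilon_0$ (with $\epsilon_0$ small) $\tilde g_{i\bar j}$ is bounded below by a fixed positive constant. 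The term $u_{i\bar j}$, bounded in $C^{l+\alpha}(\bar\Omega)$, is multiplied by $\delta$, $\delta^{3/2}$ or $\delta^{2}$ under the dilation, hence is a uniformly vanishing perturbation near $\partial\Omega$ (and a harmless bounded one on $\Omega_{\epsilon_0}$) and does not affect bounded geometry. The $C^{l+\alpha}$ bounds I would get by the same scaling bookkeeping: since $\partial/\partial w^a=\sqrt\delta\,\partial/\partial z^a$ and $\partial/\partial w^{n}=\delta\,\partial/\partial z^{n}$, each $w$-derivative carries a factor $\sqrt\delta$ or $\delta$ that exactly offsets the extra power of $(-\varphi)^{-1}\asymp\delta^{-1}$ produced by the corresponding $z$-derivative of $g_{i\bar j}$; and since forming $g_{i\bar j}$ already uses two derivatives of $\varphi$, the hypotheses $\varphi,u\in C^{l+2}$ yield uniform bounds on all $w$-derivatives of $\tilde g_{i\bar j}$ up to order $l$ together with their Hölder seminorms — this is exactly where $C^{l+2}$ is needed. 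I would also record that $(\Omega,g)$ is complete, since near $\partial\Omega$ the normal component of the metric is bounded below by a constant times $(-\varphi)^{-2}|\partial\varphi|^{2}$, forcing any path that leaves every compact set to have infinite length.

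The hard part is uniformity in the boundary point. One must produce the osculating holomorphic coordinates at each $p\in\partial\Omega$ with the change and its inverse controlled in $C^{l+1}$ uniformly in $p$, establish the inclusion $\Phi_q(\{|w|<R\})\subset\Omega$ with $R$ and the threshold on $\delta$ independent of $q$, and carry the cancellation of $\delta$-powers cleanly through every derivative of $\tilde g_{i\bar j}$ up to order $l$. Each ingredient is a routine consequence of strict pseudoconvexity, but together they are the substance of the argument, which is in essence the construction of Cheng--Yau \cite{CY80}; I would follow that reference for the uniform estimates near $\partial\Omega$.
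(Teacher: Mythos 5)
The paper gives no proof of Proposition \ref{pb2.2}; it simply cites Cheng--Yau \cite{CY80}, and your proposal is a correct reconstruction of exactly that argument (interior/collar splitting, parabolic rescaling $z'=q'+\sqrt{\delta}\,w'$, $z_n=q_n+\delta\,w_n$ at the nearest boundary point, convergence of the rescaled metric to the ball/Siegel model, and the observation that $u_{i\bar j}$ is a vanishing perturbation after rescaling). So this is essentially the same approach as the source the paper relies on, with the genuinely technical uniformity estimates correctly attributed to and deferred to \cite{CY80}.
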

\begin{proposition}\label{pb2.3}
Suppose $(M,h_{i\bar{j}})$ is a $C^{l+2}$ K\"{a}hler manifold and $\Omega\subset\subset M$ is a $C^{l+2}$ strictly pseudoconvex domain of $M$. Let $\varphi$ be a defining function of $\Omega$, i.e., $\varphi\in C^{l+2}(\bar{\Omega})$, $\varphi=0$ on $\partial\Omega$, $d\varphi\neq 0$ on $\partial \Omega$, $(\varphi_{i\bar{j}})\geq0$, $(\varphi_{i\bar{j}})>0$ in a neighborhood of $\partial\Omega$ and $\Omega=\{\varphi<0\}$. Then, $(\Omega,-(\log(-\varphi))_{i\bar{j}}+h_{i\bar{j}})$ has bounded geometry of order $l$.
\end{proposition}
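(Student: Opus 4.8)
The plan is to reduce the statement to the local model Proposition~\ref{pb2.2} in a collar neighbourhood of $\partial\Omega$, to handle the interior by compactness, and then to patch the two regimes with uniform constants. First I would note that
\[
g_{i\bar j}:=-(\log(-\varphi))_{i\bar j}+h_{i\bar j}=\frac{\varphi_{i\bar j}}{-\varphi}+\frac{\varphi_i\varphi_{\bar j}}{\varphi^2}+h_{i\bar j}
\]
is a genuine Kähler metric on $\Omega$: since $-\varphi>0$, $(\varphi_{i\bar j})\ge 0$, the rank-one term $\varphi_i\varphi_{\bar j}/\varphi^2$ is positive semidefinite and $(h_{i\bar j})>0$, we get $(g_{i\bar j})>0$ on all of $\Omega$; and $g$ is closed because $-\log(-\varphi)$ is a globally defined function on $\Omega$ while $h$ is the Kähler form of $M$. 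So it makes sense to ask whether $(\Omega,g)$ has bounded geometry.

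Fix $\delta>0$ small enough that $(\varphi_{i\bar j})>0$ on the collar $\Omega\setminus\Omega_\delta$, where $\Omega_\delta:=\{\varphi<-\delta\}$, and such that $\overline{\Omega_\delta}$ is a compact subset of $\Omega$ (possible since $(\varphi_{i\bar j})>0$ near $\partial\Omega$ by hypothesis). On $\overline{\Omega_\delta}$ one has $-\varphi\ge\delta$, hence $-\log(-\varphi)\in C^{l+2}(\overline{\Omega_\delta})$ with bounds depending only on $\delta$ and $\|\varphi\|_{C^{l+2}}$. Covering the compact set $\overline{\Omega_\delta}$ by finitely many holomorphic coordinate charts of $M$, in each of which the components $g_{i\bar j}$ are bounded in $C^l$ and $g$ is uniformly equivalent to the Euclidean metric, one obtains conditions (i)--(ii) of Definition~\ref{db2.1} over $\Omega_\delta$ with some constants $R_0,c_0>0$. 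This is the routine ``compact'' half of the argument.

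For the boundary half, use that $\partial\Omega$ is compact to choose finitely many holomorphic coordinate charts $W_1,\dots,W_N$ of $M$, each identified with an open subset of $\mathbb{C}^n$, whose union contains $\Omega\setminus\Omega_\delta$ (after shrinking $\delta$ if necessary so the $W_\alpha$ cover a full collar). On each $W_\alpha$, since $h$ is Kähler we may write $h_{i\bar j}=(\psi_\alpha)_{i\bar j}$ for a local Kähler potential $\psi_\alpha$ of the required regularity, with $(\psi_\alpha)_{i\bar j}=h_{i\bar j}>0$. Then on $W_\alpha\cap\Omega$ we have $g_{i\bar j}=-(\log(-\varphi))_{i\bar j}+(\psi_\alpha)_{i\bar j}$, which is exactly the metric appearing in Proposition~\ref{pb2.2} with defining function $\varphi$ and $u=\psi_\alpha$. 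Applying Proposition~\ref{pb2.2} --- more precisely, the construction of distinguished holomorphic charts in its proof --- produces, around each $x\in W_\alpha\cap(\Omega\setminus\Omega_\delta)$, a coordinate chart realizing Definition~\ref{db2.1} with constants $R_\alpha,c_\alpha$ depending only on $\|\varphi\|_{C^{l+2}}$, the strict pseudoconvexity constant, and $\psi_\alpha$; by compactness of $\partial\Omega$ and finiteness of the cover these constants are uniformly controlled. Patching, with $R:=\min\{R_0,R_1,\dots,R_N\}$ and $c:=\max\{c_0,c_1,\dots,c_N\}$, one gets a single covering of $\Omega$ by holomorphic charts realizing bounded geometry of order $l$, which is the assertion.

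The step I expect to be the main obstacle is the last one: verifying that the distinguished charts furnished by Proposition~\ref{pb2.2} near a boundary point depend only on local data with bounds uniform over $\partial\Omega$, and that the ``corner'' where $\partial W_\alpha$ meets $\partial\Omega$ does not interfere. The latter should be handled by observing that those charts shrink in the ambient $W_\alpha$-coordinates as $x\to\partial\Omega$, so for $x$ sufficiently close to $\partial\Omega$ they never reach $\partial W_\alpha$ and the local picture is genuinely that of a strictly pseudoconvex domain in $\mathbb{C}^n$; the region that remains a definite distance from $\partial\Omega$ falls under the interior estimate of the second paragraph, and making the two families of constants match at this junction is then just bookkeeping.
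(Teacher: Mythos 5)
Your proposal is correct and is essentially the intended argument: the paper itself gives no proof of this proposition but quotes it from Cheng--Yau \cite{CY80}, and your decomposition into a boundary collar (treated by the local chart construction underlying Proposition \ref{pb2.2}, applied after writing $h_{i\bar j}=(\psi_\alpha)_{i\bar j}$ in a finite ambient cover) plus a compact interior piece is exactly the structure of the paper's own proof of the more general Proposition \ref{pb2.4}, where the collar is $\Omega_\epsilon$ and the complement is handled by showing $\varphi<-c_3$ there --- which here is immediate from compactness of $\overline{\Omega_\delta}$. Your diagnosis of the delicate point is also right: the Cheng--Yau quasi-coordinates at a boundary point depend only on $\varphi$ and the local potential in a fixed ambient chart and shrink in ambient coordinates as $x\to\partial\Omega$, so they never meet $\partial W_\alpha$. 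The one item you omit, and which the paper's proof of Proposition \ref{pb2.4} treats explicitly, is completeness of $(\Omega,g)$, which Definition \ref{db2.1} presupposes; there it is derived from the length estimate $L(\gamma)\geq\int c\,\varphi^{-1}{\rm d}\varphi=\infty$ for any path reaching $\partial\Omega$, using the rank-one term $\varphi_i\varphi_{\bar j}/\varphi^2$ of $g$. In your setup this is not a fatal gap --- once every point carries a chart of uniform Euclidean radius on which $g$ is uniformly equivalent to the Euclidean metric, closed $g$-balls of a fixed small radius are compact and completeness follows --- but it should be stated rather than left implicit.
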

Proposition \ref{pb2.2} and \ref{pb2.3} are the examples of some relatively compact pseudoconvex domains of K\"{a}hler manifolds which have bounded geometry of order $l$. In this paper, we also consider a class of domains which is not necessary to be relatively compact on K\"{a}hler manifolds. We have the following:
\begin{proposition}\label{pb2.4}
Suppose $(M,h_{i\bar{j}})$ is a K\"{a}hler manifold with bounded geometry of order $l+2$ and $\Omega\subset M$ is a $C^{l+2}$ open domain of $M$ satisfy the following:

\hspace*{0.2cm}{\rm (i)}
\begin{minipage}[t]{0.91\linewidth}
there exists a defining function $\varphi$ on $\Omega$ such that, $\varphi\in \tilde{C}^{l+2}(\bar{\Omega})$, $\varphi=0$ on $\partial\Omega:=\bar{\Omega} \setminus \Omega$, $|d\varphi|_h\geq c$ for some $c>0$ on $\partial \Omega$, $(\tilde{\nabla}_p\tilde{\nabla}_q\varphi)\geq 0$$(1\leq p,q\leq 2n)$ in $\Omega$ and $\Omega=\{\varphi<0\}$ (where $\tilde{\nabla}$ denotes the Levi-Civita connection of the Riemannian metric with respect to $h_{i\bar{j}}$);
\end{minipage}

\hspace*{0.1cm}{\rm (ii)}
\begin{minipage}[t]{0.91\linewidth}
for any $x\in\partial\Omega$, let $(V,(v^1,\cdots,v^n))$ is a coordinate chart around $x$ with respect to the metric $h_{i\bar{j}}$ as in Definition \ref{db2.1}. In this coordinate chart, there is a positive number $\delta$ such that there exists a ball of radius $\delta$ lies inside $\Omega$ which touches to $\partial\Omega$ at $x$, and $(\varphi_{i\bar{j}})>0$ on this ball. Where $\delta$ is independent of the choice of $x$.
\end{minipage}

Then, $(\Omega,-(\log(-\varphi))_{i\bar{j}}+h_{i\bar{j}})$ has bounded geometry of order $l$.(If $\bar{\Omega}$ is compact, we may weaken condition $(\tilde{\nabla}_p\tilde{\nabla}_q\varphi)\geq 0$ to $(\varphi_{i\bar{j}})\geq0$).
\end{proposition}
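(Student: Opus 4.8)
The plan is to adapt Cheng--Yau's proof of Proposition \ref{pb2.3} \cite{CY80} to the non-relatively-compact case: all the estimates there are local near $\partial\Omega$, and the compactness of $\bar\Omega$ enters only to make the constants in Definition \ref{db2.1} uniform over $\partial\Omega$ and to dispose of the region away from $\partial\Omega$. Here the bounded geometry of $(M,h)$ disposes of the far region, while hypotheses (i)--(ii) supply the uniformity. Writing $\psi=-\log(-\varphi)$, the metric in question is $\tilde g_{i\bar j}=\psi_{i\bar j}+h_{i\bar j}$ with $\psi_{i\bar j}=\frac{\varphi_{i\bar j}}{-\varphi}+\frac{\varphi_i\varphi_{\bar j}}{\varphi^2}$, and I would fix a small $\epsilon>0$ and treat the bulk $\{-\varphi\ge\epsilon\}$ and the collar $\{-\varphi<\epsilon\}$ separately.

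First the bulk. Condition (i) forces $(\varphi_{i\bar j})\ge0$ on $\Omega$ (on a K\"ahler manifold a positive-semidefinite real Hessian has positive-semidefinite $(1,1)$-part), so $\psi_{i\bar j}\ge0$ and $\tilde g\ge h$; moreover along every geodesic ray of $(M,h)$ lying in $\Omega$ the function $\varphi$ is convex and $<0$, hence nonincreasing, so $-\varphi$ stays bounded below by a positive constant outside a fixed neighborhood of $\partial\Omega$. On that region $\varphi\in\tilde C^{l+2}(\bar\Omega)$ together with $-\varphi\ge\mathrm{const}$ gives $\psi_{i\bar j}\le Ch_{i\bar j}$, so $\tilde g$ is uniformly equivalent to $h$; taking the bounded-geometry charts of $(M,h)$ (order $l+2\ge l$), shrinking their radius by a fixed amount so they remain inside $\{-\varphi\ge\epsilon/2\}$ (possible since $|d\varphi|_h$ is bounded), and invoking $\varphi,h\in\tilde C^{l+2}$, one reads off that $\tilde g_{i\bar j}$ is $C^{l+\alpha}$-bounded and comparable to the identity in these charts with constants independent of the chart. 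Thus the bulk satisfies Definition \ref{db2.1} with uniform $R,c$.

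Next the collar. The geometric content of (ii), together with $|d\varphi|_h\ge c$ from (i), is that $\partial\Omega$ is uniformly convex: the interior ball of radius $\delta$ touching $\partial\Omega$ at each $x$ (in the normal chart about $x$) bounds the inward principal curvatures of $\partial\Omega$ at $x$ below by $1/\delta$, whence the Levi form of $\partial\Omega$ is bounded below by a uniform positive constant. With this, about each $x\in\partial\Omega$ one makes a holomorphic change of coordinates $v$ of uniformly bounded size and uniformly $C^{l+\alpha}$-bounded coefficients (here bounded geometry of $(M,h)$ and $\varphi\in\tilde C^{l+2}$ enter) bringing $\varphi$ to the normal form $\varphi=2\,\mathrm{Re}\,v^n+\sum_{j<n}|v^j|^2+O(|v|^3)$ with the cubic remainder controlled in $C^{l+\alpha}$ uniformly in $x$; then for $p\in\{-\varphi<\epsilon\}$ with $t=-\varphi(p)$, recentering at $p$ and applying the anisotropic dilation $w^j=v^j/\sqrt t$ $(j<n)$, $w^n=v^n/t$, one checks on a fixed ball $\{|w|<r_0\}$ about $p$ that $-\varphi/t\to\rho_0:=-2\,\mathrm{Re}\,w^n-\sum_{j<n}|w^j|^2$ in $C^{l+\alpha}$ at rate $O(t^{1/2})$ while $h_{i\bar j}$ becomes $O(t)$, so $\tilde g_{i\bar j}=-(\log(-\varphi/t))_{i\bar j}+h_{i\bar j}$ converges in $C^{l+\alpha}$, uniformly in $x$ and $p$, to $-(\log\rho_0)_{i\bar j}$. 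The limit is the complex hyperbolic metric on the model Siegel domain $\{\rho_0>0\}$ (up to a constant, its Bergman metric), which is homogeneous, hence has bounded geometry; at the model point it is comparable to the identity with uniform $C^{l+\alpha}$ bounds. So for $\epsilon$ below a fixed threshold the charts $(\{|w|<r_0\},w)$ realize Definition \ref{db2.1} for $\tilde g$ on the collar with uniform $R,c$, and combined with the bulk this gives bounded geometry of order $l$. Completeness (required by Definition \ref{db2.1}) holds because near $\partial\Omega$ one has $\tilde g_{i\bar j}\ge\frac{\varphi_i\varphi_{\bar j}}{\varphi^2}+h_{i\bar j}$ with $|d\varphi|_h\ge c$, forcing infinite $\tilde g$-distance to $\partial\Omega$, while off the collar $\tilde g\ge h$ and $(M,h)$ is complete.

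The main obstacle will be uniformity in $x\in\partial\Omega$: extracting the uniform lower bound on the Levi form and the uniform normalizing change of coordinates for $\varphi$ out of (i)--(ii) and the bounded geometry of $(M,h)$ (the interior-ball condition with $\delta$ independent of $x$ is indispensable, since otherwise the normalization degenerates along an end of $\partial\Omega$), and controlling the dilation limit $-\varphi/t\to\rho_0$ in $C^{l+\alpha}$ at a rate independent of $x$, which is precisely what forces $\varphi\in\tilde C^{l+2}$ (two derivatives past $l$ to absorb the rescaled Taylor remainder). Matching the collar charts to the bulk charts across $\{-\varphi\approx\epsilon\}$ is then routine. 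In the parenthetical weakening for $\bar\Omega$ compact, compactness already supplies all the uniformity and there are no ends of $\Omega$ at which the blow-up of $\psi$ must be confined, so $(\varphi_{i\bar j})\ge0$ may replace $(\tilde\nabla_p\tilde\nabla_q\varphi)\ge0$.
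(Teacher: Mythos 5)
Your overall strategy coincides with the paper's: split $\Omega$ into a collar $\Omega_\epsilon$ near $\partial\Omega$ and the bulk $\Omega\setminus\Omega_\epsilon$, show $-\varphi$ is bounded below by a positive constant on the bulk so that $\tilde g$ is there uniformly equivalent to $h$ and the charts of $(M,h)$ serve, handle the collar by the Cheng--Yau local analysis, and get completeness from $\tilde g_{i\bar j}\geq \varphi_i\varphi_{\bar j}/\varphi^2$ together with $|d\varphi|_h\geq c$. The main presentational difference is in the collar: the paper simply invokes \cite{CY80} for the charts near $\partial\Omega$ (this is where hypothesis (ii) with its uniform $\delta$ is consumed), whereas you re-derive that step via the normal form $\varphi=2\,\mathrm{Re}\,v^n+\sum_{j<n}|v^j|^2+O(|v|^3)$ and the anisotropic dilation to the Siegel-domain model. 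That is a legitimate, more self-contained route, at the cost of having to verify the uniform $C^{l+\alpha}$ convergence of the rescaled data, which you correctly identify as the delicate point.

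The one step whose stated justification does not go through is the bulk estimate $\varphi\leq -c_3$ on $\Omega\setminus\Omega_\epsilon$. You argue that along every geodesic \emph{ray} of $(M,h)$ lying in $\Omega$ the convex, negative function $\varphi$ is nonincreasing; but a point at distance $\geq\epsilon$ from $\partial\Omega$ need not lie on an infinite ray contained in $\Omega$, so this does not yield a uniform bound. The paper's argument instead takes, for each bulk point $p$, the minimizing $h$-geodesic from the nearest boundary point $p_1$ to $p$; the gradient bound $|d\varphi|_h\geq c$ gives $\varphi<-c_4$ where this geodesic crosses $\partial\Omega_\epsilon\setminus\partial\Omega$, and the convexity of $t\mapsto\varphi(\exp_{p_1}tv)$ (valid since $\tilde\nabla^2\varphi\geq 0$ and the affine coordinates are geodesic for the flat reference structure in the bounded-geometry chart) then forces $\varphi(p)\leq -\tfrac12 c_4$, for otherwise $\varphi$ would vanish again beyond $p$ and contradict minimality of $d_h(p,\partial\Omega)$. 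This is exactly the point where the full real convexity $(\tilde\nabla_p\tilde\nabla_q\varphi)\geq 0$, rather than mere plurisubharmonicity, is needed when $\bar\Omega$ is noncompact; your proposal uses the right hypothesis but the "ray" formulation should be replaced by the segment-to-nearest-boundary-point argument.
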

\begin{proof}
Since $(M,h_{i\bar{j}})$ is a K\"{a}hler manifold, at any point, we can choose a holomorphic coordinate such that all the first-order derivatives of $h_{i\bar{j}}$ vanish, so $(\tilde{\nabla}_p\tilde{\nabla}_q\varphi)=(\varphi_{qp})\geq 0$. Then by a unitary transformation, $(\varphi_{i\bar{j}})$ can be diagonalized. Note that
\[\varphi_{i\bar{i}}=\frac{1}{4}(\frac{\partial}{\partial x^2_i}+\frac{\partial}{\partial y^2_i})\varphi.\]
Hence, the convexity of $\varphi$ implies that
$(\varphi_{i\bar{j}})\geq0$ on $\Omega$.

First, we need to prove that $(\Omega,-(\log(-\varphi))_{i\bar{j}}+h_{i\bar{j}})$ is a complete K\"{a}hler manifold. For convenience, we denote the K\"{a}hler metric $-(\log(-\varphi))_{i\bar{j}}+h_{i\bar{j}}$ by $\tilde{g}$. Denote the set of points at infinity of $M$ and $\Omega$ by $\infty_M$ and $\infty_\Omega$ respectively, and $\infty_\Omega$ can be divided into two parts: $\partial\Omega=\{x\in M|\varphi(x)=0\}$ and $\Omega\cap \infty_M$. According to the Hopf-Rinow theorem, the completeness of the metric on a Riemannian manifold is equivalent to the condition that every bounded closed subset of the manifold is compact. Therefore, it suffices to prove
\begin{equation}
d_{\tilde{g}}(x_0,\partial \Omega\cup (\Omega\cap \infty_M))=\infty\;\;\;\;{\rm for} \;\forall x_0\in \Omega,
\end{equation}
where $d_{\tilde{g}}$ is the distance function with respect to the metric $\tilde{g}$. Since $(M,h_{i\bar{j}})$ is complete, we have
\begin{equation}\label{eb2.22}
d_{\tilde{g}}(x_0,\Omega\cap \infty_M)\geq d_h(x_0,\Omega\cap \infty_M)=\infty.
\end{equation}

Then, we calculate $d_{\tilde{g}}(x_0,\partial \Omega)$. Since $(M,h_{i\bar{j}})$ is a K\"{a}hler manifold with bounded geometry of order $l+2$, let $(V,(v^1,\cdots,v^n))$ be a holomorphic coordinate chart around $x_0$ as in Definition \ref{db2.1}, we calculate in this coordinate chart. Let $a$ be some point on $\partial\Omega$ and $\gamma(t):[0.1]\rightarrow \bar{\Omega}$ is a curve such that $\gamma(0)=a$, $\gamma(1)=x_0$ and $|\gamma'(t)|=1$.

Next, we calculate the length of the curve $\gamma(t)$, which we denote by $L(\gamma(t))$,
\begin{equation}
\begin{aligned}\label{ne2.23}
L(\gamma(t))&=\int_0^1\sqrt{\langle\gamma'(t),\gamma'(t)\rangle_g}{\rm d}t\\
&= \int_0^1\sqrt{\sum_{i,j}\gamma_i'(t)g_{i\bar{j}}\overline{\gamma_j'(t)}}{\rm d}t\\
&\geq \int_0^1\sqrt{\sum_{i,j}\gamma_i'(t)\frac{\varphi_i\varphi_{\bar{j}}}{\varphi^2}\overline{\gamma_j'(t)}}{\rm d}t.\\
\end{aligned}
\end{equation}

First, we bound the integrand in the above integral \eqref{ne2.23}. For any point~$p$ on the curve (we may assume that the entire curve lies in a sufficiently small neighborhood of~$a$), we can compute the gradient of~$\varphi$ in real coordinates and denote the unit vector corresponding to the gradient as~$\nabla\varphi(p)$. Clearly, there exists a unitary transformation~$F$ such that~${\rm d}F(\partial/\partial x^1)=\nabla\varphi(p)$, and using this unitary transformation, we can define a new holomorphic coordinate system~$(v^1\circ F^{-1},\cdots,v^n\circ F^{-1})$ at the point~$p$. In this coordinate system, when~$i\neq 1$, $\varphi_i=0$, from which it follows that
\begin{equation}
\begin{aligned}\label{ne2.24}
\sqrt{\sum_{i,j}\gamma_i'(t)\frac{\varphi_i\varphi_{\bar{j}}}{\varphi^2}\overline{\gamma_j'(t)}}&=\biggl|\frac{{\rm d}v^1\circ F^{-1}(\gamma(t))}{{\rm d}t}\frac{\varphi_1}{\varphi}\biggl|\\
&=\biggl|\biggl(\frac{\partial\varphi}{\partial v^1\circ F^{-1}}(\gamma(t))\biggl)^{-1}\frac{{\rm d}\varphi(\gamma(t))}{{\rm d}t}\frac{\varphi_1}{\varphi}\biggl|\\
&\geq \frac{c}{\varphi}\frac{{\rm d}\varphi(\gamma(t))}{{\rm d}t},
\end{aligned}
\end{equation}
where~$c$ is a positive constant independent of the choice of~$p$, by \eqref{ne2.23} and \eqref{ne2.24}, we have
\begin{equation}
\begin{aligned}
L(\gamma(t))&\geq\int_0^1 \frac{c}{\varphi}\frac{{\rm d}\varphi(\gamma(t))}{{\rm d}t}{\rm d}t\\
&=\int_{\varphi(a)}^{\varphi(x_0)}\frac{c}{\varphi}{\rm d}\varphi\\
&=\infty.
\end{aligned}
\end{equation}

Hence
\begin{equation}\label{eb2.29}
d_{\tilde{g}}(x_0,\partial \Omega)=\infty,
\end{equation}
by \eqref{eb2.22} and \eqref{eb2.29}, we can prove that $(\Omega,-(\log(-\varphi))_{i\bar{j}}+h_{i\bar{j}})$ is a complete K\"{a}hler manifold.

Next, we prove that $(\Omega,-(\log(-\varphi))_{i\bar{j}}+h_{i\bar{j}})$ has bounded geometry of order $l$. Define
\begin{equation}
\Omega_\epsilon:=\{x\in\Omega|d_h(x,\partial \Omega)<\epsilon\},
\end{equation}
if $\epsilon$ is sufficiently small, by Cheng-Yau \cite{CY80}, we have that for any $x\in \Omega_\epsilon$, we can find a coordinate chart around $x$ with respect to the metric $-(\log(-\varphi))_{i\bar{j}}+h_{i\bar{j}}$ as in Definition \ref{db2.1}. For $x\in \Omega \setminus \Omega_\epsilon$, we want to prove the coordinate chart $(V,(v^1,\cdots,v^n))$ around $x$ as above is also the coordinate chart with respect to the metric $-(\log(-\varphi))_{i\bar{j}}+h_{i\bar{j}}$ as in Definition \ref{db2.1}.

Obviously
\begin{equation}
-(\log(-\varphi))_{i\bar{j}}+h_{i\bar{j}}\geq h_{i\bar{j}}\geq c_2 \delta_{ij},
\end{equation}
for some $c_2>0$ which is dependent of the choice of $x$. Since $\varphi$ and $h_{i\bar{j}}$ have all their derivatives up to order $l+2$ and $l$ bounded by some constant independent of $x$, and
\begin{equation}
-(\log(-\varphi))_{i\bar{j}}=\frac{\varphi_{i\bar{j}}}{-\varphi}+\frac{\varphi_i\varphi_{\bar{j}}}{\varphi^2},
\end{equation}
then we only need to prove that
\begin{equation}\label{eb2.33}
\varphi< -c_3 \;\;\;\;\;{\rm on}\; \Omega \setminus \Omega_\epsilon,
\end{equation}
for some $c_3>0$. If $\bar{\Omega}$ is compact, the fact \eqref{eb2.33} is obvious, in the following, we consider the case that $\bar{\Omega}$ is noncompact. Since $|d\varphi|\geq c$, if $\epsilon$ is sufficiently small, we have that
\begin{equation}\label{eb2.34}
\varphi(x)<-c_4 \;\;\;\;\;{\rm for}\; x\in \partial \Omega_\epsilon\setminus\partial\Omega,
\end{equation}
for some $c_4>0$ independent of $x$. If there is no $c_3$ satisfying \eqref{eb2.33}, then we obtain
\begin{equation}\label{eb2.35}
\varphi(p)>-\frac{1}{2}c_4 \;\;\;\;\;{\rm for \;some}\; p\in \Omega \setminus \Omega_\epsilon.
\end{equation}
Let $p_1$ be a point on $\partial\Omega$ such that
\begin{equation}
d_h(p,p_1)=d_h(p,\partial\Omega).
\end{equation}

Let $(V,(x^1,\cdots,x^n))$ be the coordinate chart around $p$ with respect to the metric $h_{i\bar{j}}$ as in Definition \ref{db2.1}, then there exist a $v\in T_{p_1}M$ and a geodesic $\exp_{p_1} tv:[0,\infty)\rightarrow M$ defined by exponential map such that $\exp_{p_1} 0=p_1$ and $\exp_{p_1} t_0v=p$ for some $t_0>0$ and $L(\exp_{p_1} tv|_0^{t_0})=d_h(p,\partial\Omega)$. For any $t_1\geq 0$, we can find a local coordinate chart $(V,(x_1,\cdots,x_{2n}))$ around $\exp_{p_1} t_1v$ such that
\begin{equation}
\exp_{p_1} tv=(a_1(t-t_1),\cdots,a_{2n}(t-t_1))\;\;\;\;{\rm in}\;(V,(x_1,\cdots,x_{2n})),
\end{equation}
for some constants $a_1,\cdots,a_{2n}$, then we obtain
\begin{equation}\label{eb2.38}
\frac{d^2(\varphi(\exp_{p_1} tv))}{dt^2}=\sum_{p,q=1}^{2n}\varphi_{pq}a_pa_q\geq 0.
\end{equation}
By \eqref{eb2.38}, we have that
\begin{equation}\label{eb2.39}
\frac{d^2(\varphi(\exp_{p_1} tv))}{dt^2}\geq 0,
\end{equation}
for $t\in[0,\infty)$. Let $t_2$ is a constant such that $\exp_{p_1} t_2v\in \partial \Omega_\epsilon\setminus\partial\Omega$ and $t_2\in (0,t_0)$, then by \eqref{eb2.34}, we obtain
\begin{equation}\label{eb2.40}
\varphi(\exp_{p_1} t_2v))<-c_4,
\end{equation}
and by \eqref{eb2.35}, we have
\begin{equation}\label{eb2.41}
\varphi(\exp_{p_1} t_0v))>-\frac{1}{2}c_4.
\end{equation}
By \eqref{eb2.39}, \eqref{eb2.40} and \eqref{eb2.41}, we have that there exists $t_3\in(t_0,2t_0-t_2)$ such that
\begin{equation}
\varphi(\exp_{p_1} t_3v))=0,
\end{equation}
so $\exp_{p_1} t_3v\in \partial \Omega$, then we get
\begin{equation}
d_h(p,\partial \Omega)\leq L(\exp_{p_1} tv|_{t_0}^{t_3})<L(\exp_{p_1} tv|_0^{t_0})=d_h(p,\partial\Omega).
\end{equation}
This is a contradiction! Then we can prove \eqref{eb2.33}.
\end{proof}

We note that there exists a class of pseudoconvex domains in K\"{a}hler manifolds $(M,h_{i\bar{j}})$ as in Proposition \ref{pb2.3} or \ref{pb2.4} such that its defining function $\varphi$, $|\partial \varphi|_h$ and $|\partial\bar{\partial}\varphi|_h$ are bounded (these conditions can be used in $C^{0}$-estimates), as it is illustrated
by the following examples.
\begin{example}
Let $\Omega$ be a relatively compact strictly pseudoconvex domain in $\mathbb{C}^n$. It is well known the defining function $\varphi$ of $\Omega$ always exist. It is easy to see that $\varphi$, $|\partial \varphi|_{g_e}$ and $|\partial\bar{\partial}\varphi|_{g_e}$ are bounded on $\Omega$, where $g_e$ is the standard on $\mathbb{C}^n$. This is also a example of Proposition \ref{pb2.3}.
\end{example}
There are also some examples of unbounded pseudoconvex domains:
\begin{example}
Define
\[\Omega:=\{(z_1,\ldots,z_n)\in\mathbb{C}^n|\varphi(z_1,\ldots,z_n)<0\},\]
where
\begin{equation}\label{e2.41}
\varphi(z_1,\ldots,z_n)=\sum_{i=1}^n\biggl(-\arctan\Big(({\rm Im} z_i)+\tan\frac{n\pi}{2n+2}\Big)+\frac{n\pi}{2n+2}\biggl).
\end{equation}
So on $\Omega$,
\begin{equation}\label{e2.42}
\begin{aligned}
-\arctan\Big(({\rm Im} z_j)+\tan\frac{n\pi}{2n+2}\Big)&<\sum_{i\neq j}\biggl(\arctan\Big(({\rm Im} z_i)+\tan\frac{n\pi}{2n+2}\Big)-\frac{n\pi}{2n+2}\biggl)-\frac{n\pi}{2n+2}\\
&<\sum_{i\neq j}\biggl(\frac{\pi}{2}-\frac{n\pi}{2n+2}\biggl)-\frac{n\pi}{2n+2}\\
&=-\frac{\pi}{2n+2}
\end{aligned}
\end{equation}
for $j=1,\ldots,n$.
By straightforward calculations, we obtain
\begin{equation}\label{e2.43}
\varphi_i=\frac{\sqrt{-1}}{2}\frac{1}{1+\Big(({\rm Im} z_i)+\tan\frac{n\pi}{2n+2}\Big)^2},
\end{equation}
\begin{equation}\label{e2.44}
\varphi_{i\bar{j}}=\delta_{ij}\frac{\Big(({\rm Im} z_i)+\tan\frac{n\pi}{2n+2}\Big)}{2\biggl(1+\Big(({\rm Im} z_i)+\tan\frac{n\pi}{2n+2}\Big)^2\biggl)^2},
\end{equation}
By \eqref{e2.41}, \eqref{e2.42}, \eqref{e2.43} and \eqref{e2.44}, we obtain that $\varphi$ is a defining function of pseudoconvex domain $\Omega$, $\varphi$, $|\partial \varphi|_{g_e}$ and $|\partial\bar{\partial}\varphi|_{g_e}$ are bounded on $\Omega$. This a also a example of Proposition \ref{pb2.4}.
\end{example}

We can also give an example of a pseudoconvex domains on a non-K\"{a}hler manifold.
\begin{example}
$M$ is a non-K\"{a}hler manifold, and $h_{i\bar{j}}$ is a Hermitian metric on $M$. Then there must exist a point $p\in M$ such that $h_{i\bar{j}}(p)$ is not a K\"{a}hler metric. Take a local holomorphic coordinate system chart $(V(p),(z^1,\cdots,z^n))$ around $p$ and assume that $p$ is coordinate origin. Define $\varphi(z)=z\bar{z}-\epsilon$, where $\epsilon$ is a positive constant. Then if $\epsilon$ small enough, $\varphi$ can be seen as a defining function of a domain $\Omega:=\{z\in(V(p),(z^1,\cdots,z^n))|\varphi(z)<0\}$ on $(M,h_{i\bar{j}})$, $\varphi$, $|\partial \varphi|_h$ and $|\partial\bar{\partial}\varphi|_h$ are bounded on $\Omega$.
\end{example}

%\begin{equation}\label{eb2.42}
%\begin{aligned}
%\varphi(\exp_p (2t_0-t_2)v))-\varphi(\exp_p t_0v))&\geq \varphi(\exp_p t_0v))-\varphi(\exp_p t_2v))\\
%&>\frac{1}{2}c_4,
%\end{aligned}
%\end{equation}
%by \eqref{eb2.41} and \eqref{eb2.42}, we get
%\begin{equation}\label{eb2.43}
%\varphi(\exp_p (2t_0-t_2)v))>0,
%\end{equation}
%then by \eqref{eb2.41} and \eqref{eb2.43}, there exist

\section{$C^{0}$-estimates}

In this section, we want to establish a priori $C^{0}$-estimates of \eqref{e1.1}. One difficulty is that the equation's left-hand side does not involve $u$ explicitly. But note that $\partial\bar{\partial}(\varphi u)=u\partial\bar{\partial}\varphi+2{\rm Re}(\sqrt{-1}\partial u\wedge \bar{\partial}\varphi)+\varphi\partial\bar{\partial}u$. Therefore, if we replace the term $u$ in equation \eqref{e1.1} with $\varphi u$, we can apply the maximum principle to $u$ to establish a priori $C^{0}$-estimates, this is the reason we assume the K\"{a}hler manifold $\Omega$ admits a bounded global K\"{a}hler potential $\varphi$.

Our main Theorem of this Section is Theorem \ref{t6.1}. To prove this theorem, we first need the following lemma:

\begin{lemma}\label{l6.2}
Let $\Omega$ be a complete K\"{a}hler manifold with a bounded global K\"{a}hler potential $\varphi<0$, let $\alpha$ be a Hermitian metric on $\Omega$.
Fix a real $(1,1)$-form $\chi$ on $(\Omega,\alpha)$ such that $\lambda[\alpha^{i\bar{p}}\chi_{j\bar{p}}]\in \Gamma$, given $h\in C^1(\Omega\times\mathbb{R})$,
let $u\in C^2(\Omega)$ satisfy that
\begin{equation}\label{e6.3}
F[\alpha^{i\bar{p}}(\chi+\sqrt{-1}\partial\bar{\partial}(\varphi u-u))_{j\bar{p}}]=h(x,\varphi u-u).
\end{equation}
Suppose that the above equation satisfies conditions \emph{\textbf{(a1)}}-\emph{\textbf{(a5)}}, then there is a constant $c$ depending only on $\alpha$, $\chi$, $F$, $h$ and $n$ such that
\begin{equation}\label{e6.4}
\sup_{\Omega}|u|\leq c\sup_{\Omega}\frac{|h(x,0)-F[\alpha^{i\bar{p}}\chi_{j\bar{p}}]|}{\underset{\min}{\lambda}(\alpha^{a\bar{l}}\varphi_{b\bar{l}})}+\limsup_{z\rightarrow \infty}|u|.
\end{equation}
\end{lemma}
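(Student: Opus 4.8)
The plan is to run a maximum-principle argument directly on $u$, exploiting the algebraic identity $\sqrt{-1}\partial\bar\partial(\varphi u) = u\sqrt{-1}\partial\bar\partial\varphi + 2\operatorname{Re}(\sqrt{-1}\partial u\wedge\bar\partial\varphi) + \varphi\sqrt{-1}\partial\bar\partial u$ quoted at the start of the section. Substituting this into \eqref{e6.3}, the $(1,1)$-form appearing inside $F$ becomes
\[
\chi + \sqrt{-1}\partial\bar\partial(\varphi u - u) = \chi + u\,\sqrt{-1}\partial\bar\partial\varphi + (\varphi-1)\sqrt{-1}\partial\bar\partial u + 2\operatorname{Re}(\sqrt{-1}\partial u\wedge\bar\partial\varphi).
\]
Since $\Omega$ is complete Kähler with bounded global Kähler potential $\varphi$, Corollary \ref{pbb4.3} (equivalently the generalized maximum principle of Cheng-Yau, with barrier functions built from $\varphi$ as usual) applies, so at a sequence approaching $\sup_\Omega u$ we have $du\to 0$ and $\limsup(u_{p\bar q})\le 0$; if $\sup_\Omega u \le \limsup_{z\to\infty}|u|$ there is nothing to prove, so assume the supremum is attained "in the interior" in the generalized sense, with value $M_+ := \sup_\Omega u > \limsup_{z\to\infty}|u|$.

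Along such a sequence $x_k$, the cross term $2\operatorname{Re}(\sqrt{-1}\partial u\wedge\bar\partial\varphi)\to 0$ and the term $(\varphi-1)\sqrt{-1}\partial\bar\partial u$ contributes a form that is $\ge -o(1)$ relative to $\alpha$ (here one uses $\varphi - 1 < 0$, so $(\varphi-1)u_{p\bar q}$ is, modulo $o(1)$, a positive-semidefinite perturbation; this is precisely why the factor $\varphi u - u = (\varphi-1)u$ rather than $\varphi u$ is used — the coefficient of $\sqrt{-1}\partial\bar\partial u$ stays strictly negative and bounded away from zero). Using monotonicity of $F$ (condition \textbf{(a2)}: $f_i>0$, i.e.\ $F^{ij}$ positive definite), we get, in the limit along $x_k$,
\[
h(x_k, (\varphi-1)u(x_k)) = F[\alpha^{i\bar p}(\cdots)_{j\bar p}] \ge F[\alpha^{i\bar p}(\chi_{j\bar p} + u(x_k)\varphi_{j\bar p})] + o(1).
\]
Now I want a lower bound for the right-hand side that grows in $u(x_k)\approx M_+$. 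Since $\lambda[\alpha^{i\bar p}\chi_{j\bar p}]\in\Gamma$ and $\Gamma$ contains the positive orthant, for $M_+>0$ large the matrix $\alpha^{i\bar p}(\chi_{j\bar p}+M_+\varphi_{j\bar p})$ has eigenvalues in $\Gamma$, and by concavity of $F$,
\[
F[\alpha^{i\bar p}(\chi_{j\bar p}+u\varphi_{j\bar p})] \ge F[\alpha^{i\bar p}\chi_{j\bar p}] + u\,F^{ij}[\alpha^{i\bar p}\chi_{j\bar p}]\,\alpha^{k\bar p}\varphi_{j\bar p}\cdot(\text{evaluated appropriately}) \ge F[\alpha^{i\bar p}\chi_{j\bar p}] + c_0\, u\,\underset{\min}{\lambda}(\alpha^{a\bar l}\varphi_{b\bar l}),
\]
where $c_0>0$ comes from the uniform positive-definiteness of $F^{ij}$ at $\lambda[\alpha^{i\bar p}\chi_{j\bar p}]$ (a fixed matrix, so $F^{ij}$ there is a fixed positive-definite matrix with eigenvalues bounded below by a constant $c_0$ depending only on $\alpha,\chi,F,n$); the trace pairing of $F^{ij}$ with $\alpha^{i\bar p}\varphi_{j\bar p}$ is then $\ge c_0 \underset{\min}{\lambda}(\alpha^{a\bar l}\varphi_{b\bar l})$. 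On the left, since $h_u\ge 0$ (condition \textbf{(a5)}) and $(\varphi-1)u \le 0$ when $u = M_+ > 0$, we have $h(x_k,(\varphi-1)u(x_k)) \le h(x_k, 0)$. Combining,
\[
c_0\, M_+\, \underset{\min}{\lambda}(\alpha^{a\bar l}\varphi_{b\bar l})(x_k) \le h(x_k,0) - F[\alpha^{i\bar p}\chi_{j\bar p}](x_k) + o(1),
\]
which after dividing and taking $\limsup$ yields $M_+ \le c\sup_\Omega \frac{|h(x,0)-F[\alpha^{i\bar p}\chi_{j\bar p}]|}{\underset{\min}{\lambda}(\alpha^{a\bar l}\varphi_{b\bar l})}$ with $c = 1/c_0$. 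The lower bound on $\inf_\Omega u$ is symmetric: apply the generalized maximum principle to $-u$, so at a sequence approaching $M_- := \inf_\Omega u$ one has $du\to 0$, $\liminf(u_{p\bar q})\ge 0$; using concavity in the other direction (the first inequality $F^{ij}(A)(B-A)_{i\bar j}\ge F(B)-F(A)$ quoted in Section 2) together with $h_u\ge 0$ and $(\varphi-1)u\ge 0$ when $u<0$ to get $h(x_k,(\varphi-1)u)\ge h(x_k,0)$, one derives the matching lower bound. Taking both together gives \eqref{e6.4}.

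I expect the main obstacle to be making the generalized-maximum-principle step rigorous with the unbounded lower-order terms: one must verify that the gradient term $2\operatorname{Re}(\sqrt{-1}\partial u\wedge\bar\partial\varphi)$ and the Hessian-of-$u$ term really do become negligible or favorably signed along the Cheng-Yau sequence, which requires knowing $\nabla\varphi$ and $\nabla\bar\nabla\varphi$ are controlled relative to $\alpha$ near that sequence — this is where one quietly uses that the sequence can be taken away from infinity (since $M_\pm$ strictly exceeds $\limsup_{z\to\infty}|u|$, so the sequence stays in a region where $\varphi$ and its derivatives are bounded in the fixed background metric $\alpha$) — together with the delicate point that $F^{ij}$ is evaluated not at the fixed matrix $\alpha^{i\bar p}\chi_{j\bar p}$ but along the concavity chord, so one must phrase the concavity inequality so that the relevant $F^{ij}$ is indeed the fixed one (use $F(B)-F(A)\le F^{ij}(A)(B-A)_{i\bar j}$ with $A = \alpha^{i\bar p}\chi_{j\bar p}$, $B = \alpha^{i\bar p}(\chi_{j\bar p}+u\varphi_{j\bar p})$, giving an upper bound — that is the direction needed for the $\inf$ estimate — and the reversed inequality $F(B)-F(A)\ge F^{ij}(B)(B-A)_{i\bar j}$ for the $\sup$ estimate, in which case one instead needs a lower bound on $F^{ij}(B)$ applied to $\alpha^{i\bar p}\varphi_{j\bar p}\ge 0$, which again holds since all $F^{ij}\ge 0$ by \textbf{(a2)}, but now one must also handle the possibility that $\underset{\min}\lambda(\alpha^{a\bar l}\varphi_{b\bar l})$ is small by absorbing it into the stated quotient). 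Organizing these two cases cleanly, and checking that the constant $c$ depends only on the listed data, is the real content.
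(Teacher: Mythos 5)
Your overall route is the same as the paper's: reduce to an honest interior extremum (since $\sup_\Omega u>\limsup_{z\to\infty}|u|$ forces the supremum to be attained at a point $P$, so the classical maximum principle suffices and the $o(1)$ bookkeeping along a Cheng--Yau sequence is unnecessary), kill the cross term $2\operatorname{Re}(\sqrt{-1}\partial u\wedge\bar\partial\varphi)$ via $\partial u=0$, use $\varphi-1<0$ and ellipticity to discard $(\varphi-1)\sqrt{-1}\partial\bar\partial u$, and then compare $F[\alpha^{i\bar p}(\chi+\sqrt{-1}u\partial\bar\partial\varphi)_{j\bar p}]$ with $F[\alpha^{i\bar p}\chi_{j\bar p}]$ by concavity. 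The $\inf$ estimate is fine as you describe it, since there the concavity chord puts $F^{ab}$ at the \emph{fixed} matrix $\alpha^{i\bar p}\chi_{j\bar p}$, whose $F^{kk}$ are uniformly positive background quantities.

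The gap is in the $\sup$ estimate. Your first display uses $F(B)\ge F(A)+F^{ij}(A)(B-A)_{i\bar j}$ with $A=\alpha^{i\bar p}\chi_{j\bar p}$; for a concave $F$ the tangent plane at $A$ lies \emph{above} the graph, so this inequality points the wrong way. You partially correct this at the end by switching to $F(B)-F(A)\ge F^{ij}(B)(B-A)_{i\bar j}$, but then you claim the needed lower bound follows because ``all $F^{ij}\ge 0$ by \textbf{(a2)}.'' Nonnegativity gives no quantitative lower bound: $F^{ij}(B)$ is evaluated at the solution-dependent matrix $B=\alpha^{i\bar p}(\chi+\sqrt{-1}u\partial\bar\partial\varphi)_{j\bar p}$, and its eigenvalues can degenerate to $0$ (e.g.\ for $f=\log\lambda_1\cdots\lambda_n$ the $f_k=1/\lambda_k$ are small when $\lambda_k$ is large). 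What is actually needed, and what the paper proves in \eqref{e6.12}--\eqref{e6.14} following Lemma~9 of \cite{S18}, is a uniform lower bound $\mathcal{F}=\sum_k F^{kk}(B)=\sum_k f_k(\lambda)\ge N^{-1}(\sigma-\sup_\Omega h(x,0))>0$ valid for every $\lambda\in\Gamma$ with $f(\lambda)\le\sup_\Omega h(x,0)$; this uses $f(\lambda+N\mathbf{1})>\sigma$ (from \textbf{(a4)}, convexity of $\Gamma$, and the choice $\sigma\in(\sup_\Omega h(x,0),\sup_\Gamma f)$ permitted by \textbf{(a5)}) together with $f(\lambda+N\mathbf{1})\le f(\lambda)+N\sum_i f_i(\lambda)$. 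Your argument never invokes \textbf{(a4)} or the inequality $\sup_M h(x,0)<\sup_\Gamma f$ from \textbf{(a5)}, which is the sign that this step is missing; once it is supplied, $F^{ij}(B)(\alpha^{i\bar l}\varphi_{j\bar l})\ge \mathcal{F}\cdot\underset{\min}{\lambda}(\alpha^{a\bar l}\varphi_{b\bar l})$ closes the estimate exactly as in the paper.
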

\begin{proof}
First, we want to estimate $\sup_{\Omega}u$. We divided into two cases to consider:(i) $\sup_{\Omega}u=\limsup_{z\rightarrow  \infty}|u|$, in this case, \eqref{e6.4} obviously holds; (ii) $\sup_{\Omega}u>\limsup_{z\rightarrow \infty}|u|$, then there exists some $P\in \Omega$ such that $u(P)=\sup_{\Omega}u$. By maximum principle, we have at $P$ that
\begin{equation}\label{e6.5}
\partial u=0,\;\;\;
\sqrt{-1}\partial\bar{\partial} u\leq 0.
\end{equation}
By \eqref{e6.3}, \eqref{e6.5} and the fact that \eqref{e6.3} is an elliptic equation, we get
\begin{equation}\label{e6.6}
\begin{aligned}
h(x,\varphi u-u)&=F[\alpha^{i\bar{p}}(\chi+\sqrt{-1}\partial\bar{\partial}(\varphi u-u))_{j\bar{p}}]\\
&=F[\alpha^{i\bar{p}}(\chi+\sqrt{-1}u\partial\bar{\partial}\varphi+2{\rm Re}(\sqrt{-1}\partial u\wedge \bar{\partial}\varphi)
+\sqrt{-1}(\varphi-1)\partial\bar{\partial} u)_{j\bar{p}}]\\
&\geq F[\alpha^{i\bar{p}}(\chi+\sqrt{-1}u\partial\bar{\partial}\varphi)_{j\bar{p}}].
\end{aligned}
\end{equation}
By condition \textbf{(a5)}, we obtain
\begin{equation}\label{en6.8}
h(x,\varphi u-u)\leq h(x,0)
\end{equation}
Hence $F(A)=f(\lambda_1,\ldots,\lambda_n)$ is concave, we have
\begin{equation}\label{e6.7}
\begin{aligned}
&F[\alpha^{i\bar{p}}(\chi+\sqrt{-1}u\partial\bar{\partial}\varphi)_{j\bar{p}}]-F[\alpha^{i\bar{p}}\chi_{j\bar{p}}]\\
\geq &F^{ab}[\alpha^{i\bar{p}}(\chi+\sqrt{-1}u\partial\bar{\partial}\varphi)_{j\bar{p}}](\alpha^{a\bar{l}}u\varphi_{b\bar{l}}),
\end{aligned}
\end{equation}
where $F^{pq}[B]$ denotes the partial derivative of the function $F(B)$ with respect to the $(p,q)$-entry of the metric $B$. By \eqref{e6.6}, \eqref{en6.8} and \eqref{e6.7}, we obtain
\begin{equation}\label{e6.8}
F^{ab}[\alpha^{i\bar{p}}(\chi+\sqrt{-1}u\partial\bar{\partial}\varphi)_{j\bar{p}}](\alpha^{a\bar{l}}\varphi_{b\bar{l}})u\leq h(x,0)-F[\alpha^{i\bar{p}}\chi_{j\bar{p}}]
\end{equation}

To estimate $\sup_{\Omega}u$ from \eqref{e6.8}, we will give lower bound for $\mathcal{F}$, the method is similar to Lemma 9 in \cite{S18}, where $\mathcal{F}=\sum_k F^{kk}[\alpha^{i\bar{p}}(\chi+\sqrt{-1}u\partial\bar{\partial}\varphi)_{j\bar{p}}]$. We can choose a new coordinate system at $P$ such that $\alpha_{i\bar{j}}=\delta_{ij}$ and $(\chi+\sqrt{-1}u\partial\bar{\partial}\varphi)_{j\bar{p}}=a_j \delta_{jp}$, then $[\alpha^{i\bar{p}}(\chi+\sqrt{-1}u\partial\bar{\partial}\varphi)_{j\bar{p}}]$ is a diagonal matrix, this follows that
\begin{equation}\label{e6.9}
\lambda_i^{pq}=\delta_{pi}\delta_{qi},
\end{equation}
where $\lambda_i$ is the $i$-th eigenvalue of the metric $[\alpha^{i\bar{p}}(\chi+\sqrt{-1}u\partial\bar{\partial}\varphi)_{j\bar{p}}]$, and $\lambda_i^{pq}$ denotes the derivative with respect to the $(p,q)$-entry of the metric $[\alpha^{i\bar{p}}(\chi+\sqrt{-1}u\partial\bar{\partial}\varphi)_{j\bar{p}}]$. By \eqref{e6.9}, we have
\begin{equation}
\begin{aligned}
F^{ab}[\alpha^{i\bar{p}}(\chi+\sqrt{-1}u\partial\bar{\partial}\varphi)_{j\bar{p}}]&=f^{ab}(\lambda_1,\ldots,\lambda_n)\\
&=\sum_k f_k \lambda_k^{ab}\\
&=\sum_k f_k \delta_{ak}\delta_{bk},
\end{aligned}
\end{equation}
Hence we have that $F^{ab}$ vanishes whenever $a\neq b$, while on the other hand $F^{kk}=f_k$. Then
\begin{equation}
\mathcal{F}=\sum_k f_k.
\end{equation}

By \textbf{(a5)}, we can choose $\sigma\in (\sup_\Omega h(x,0),\sup_\Gamma f)$, let $x$ be the closest point to the origin such that $f(x)=\sigma$. By the concavity of $f$ and symmetry under permuting the variables, we must have $x=N{\rm \textbf{1}}$ for some $N>0$, where ${\rm \textbf{1}}=(1,\ldots,1)\in \mathbb{R}^n$. For any $\lambda\in \Gamma$, assumption \textbf{(a4)} implies that there is some $T>1$, such that $f(T\lambda)>\sigma$. By \textbf{(a2)} condition, we also have $f(Tx/(T-1))>\sigma$. The concavity of $f$ implies that then
\begin{equation}\label{e6.12}
\begin{aligned}
f(x+\lambda)&=f(\frac{T-1}{T}\frac{Tx}{T-1}+\frac{1}{T}T\lambda)\\
&>\frac{T-1}{T}f(\frac{Tx}{T-1})+\frac{1}{T}f(T\lambda)\\
&>\sigma.
\end{aligned}
\end{equation}
Then by \eqref{e6.12} we have $f(\lambda+N{\rm \textbf{1}})>\sigma$. By concavity we obtain
\begin{equation}\label{en6.15}
f(\lambda+N{\rm \textbf{1}})\leq f(\lambda)+N\sum_{i=1}^n f_i(\lambda),
\end{equation}
and by \eqref{e6.6}, we have
\begin{equation}\label{en6.16}
f(\lambda(\alpha^{i\bar{p}}(\chi+\sqrt{-1}u\partial\bar{\partial}\varphi)_{j\bar{p}}))\leq h(P,\varphi u(P)-u(P))\leq h(P,0)\leq\sup_\Omega h(x,0),
\end{equation}
then by \eqref{en6.15} and \eqref{en6.16}, we have
\begin{equation}\label{e6.14}
\mathcal{F}(\lambda)\geq N^{-1}(\sigma-\sup_\Omega h(x,0))>0.
\end{equation}

Note that $N$ and $\sigma$ are independent of the choice of $P$ and $u$, then by \eqref{e6.8} and \eqref{e6.14}, we obtain
\begin{equation}\label{e6.15}
\sup_{\Omega}u\leq \max\{c_1\sup_{\Omega}\frac{(h(x,0)-F[\alpha^{i\bar{p}}\chi_{j\bar{p}}])}{\underset{\min}{\lambda}(\alpha^{a\bar{l}}\varphi_{b\bar{l}})},0\},
\end{equation}
In the following $c_i$ will denote a positive constant depending only on $\alpha$, $\chi$, $F$, $h$ and $n$. Note that $\underset{\min}{\lambda}(\alpha^{a\bar{l}}\varphi_{b\bar{l}})$ is independent of the choice of local coordinate system.

Next, we want to estimate $\inf_{\Omega}u$. We divided into two cases to consider: (i) $\inf_{\Omega}u=-\limsup_{z\rightarrow \infty}|u|$, in this case, \eqref{e6.4} obviously holds; (ii) $\inf_{\Omega}u<-\limsup_{z\rightarrow \partial\Omega\cup \infty}|u|$, then there exists some $Q\in \Omega$ such that $u(Q)=\inf_{\Omega}u$. By maximum principle, we have at $Q$ that
\begin{equation}\label{e6.16}
\partial u=0,\;\;\;
\sqrt{-1}\partial\bar{\partial} u\geq 0.
\end{equation}
By \eqref{e6.3}, \eqref{e6.16} and the fact that \eqref{e6.3} is an elliptic equation, we get
\begin{equation}\label{e6.17}
\begin{aligned}
h(x,\varphi u-u)&=F[\alpha^{i\bar{p}}(\chi+\sqrt{-1}\partial\bar{\partial}(\varphi u-u))_{j\bar{p}}]\\
&=F[\alpha^{i\bar{p}}(\chi+\sqrt{-1}u\partial\bar{\partial}\varphi+2{\rm Re}(\sqrt{-1}\partial u\wedge \bar{\partial}\varphi)
+\sqrt{-1}(\varphi-1)\partial\bar{\partial} u)_{j\bar{p}}]\\
&\leq F[\alpha^{i\bar{p}}(\chi+\sqrt{-1}u\partial\bar{\partial}\varphi)_{j\bar{p}}].
\end{aligned}
\end{equation}
Hence $F(A)=f(\lambda_1,\ldots,\lambda_n)$ is concave and $\lambda[\alpha^{i\bar{p}}\chi_{j\bar{p}}]\in \Gamma$, we have
\begin{equation}\label{e6.18}
F[\alpha^{i\bar{p}}(\chi+\sqrt{-1}u\partial\bar{\partial}\varphi)_{j\bar{p}}]-F[\alpha^{i\bar{p}}\chi_{j\bar{p}}]
\leq F^{ab}[\alpha^{i\bar{p}}\chi_{j\bar{p}}](\alpha^{a\bar{l}}u\varphi_{b\bar{l}}).
\end{equation}
By condition \textbf{(a5)}, we obtain
\begin{equation}\label{en6.22}
h(x,\varphi u-u)\geq h(x,0).
\end{equation}
By \eqref{e6.17}, \eqref{e6.18} and \eqref{en6.22}, we obtain
\begin{equation}
F^{ab}[\alpha^{i\bar{p}}\chi_{j\bar{p}}](\alpha^{a\bar{l}}\varphi_{b\bar{l}})u\geq h(x,0)-F[\alpha^{i\bar{p}}\chi_{j\bar{p}}],
\end{equation}
choose a new coordinate system at $Q$ such that $[\alpha^{i\bar{p}}\chi_{j\bar{p}}]$ is a diagonal matrix, and note that $F^{kk}[\alpha^{i\bar{p}}\chi_{j\bar{p}}]>0$ for $k=1,\cdots,n$, then
\begin{equation}\label{e6.20}
\inf_{\Omega}u\geq \min\{c_2\inf_{\Omega}\frac{(h(x,0)-F[\alpha^{i\bar{p}}\chi_{j\bar{p}}])}{\underset{\min}{\lambda}(\alpha^{a\bar{l}}\varphi_{b\bar{l}})},0\}.
\end{equation}

Finally, by \eqref{e6.15} and \eqref{e6.20}, we obtain
\begin{equation}
\sup_{\Omega}|u|\leq c_3\sup_{\Omega}\frac{|h(x,0)-F[\alpha^{i\bar{p}}\chi_{j\bar{p}}]|}{\underset{\min}{\lambda}(\alpha^{a\bar{l}}\varphi_{b\bar{l}})}+\limsup_{z\rightarrow \infty}|u|.
\end{equation}
\end{proof}

Now, we can proof Theorem \ref{t6.1}.
\begin{proof}[Proof of Theorem \ref{t6.1}]
Let $u$ satisfies \eqref{e6.1}, then define
\begin{equation}\label{e6.22}
v=\frac{u}{\varphi-1}.
\end{equation}
We can verify that $v$ satisfies equation \eqref{e6.3}. By Lemma \ref{l6.2}, we obtain that
\begin{equation}\label{e6.23}
\sup_{\Omega}|v|\leq \tilde{c}\sup_{\Omega}\frac{|h(x,0)-F[\alpha^{i\bar{p}}\chi_{j\bar{p}}]|}{\underset{\min}{\lambda}(\alpha^{a\bar{l}}\varphi_{b\bar{l}})}+\limsup_{z\rightarrow \infty}|v|,
\end{equation}
where $\tilde{c}$ is a positive constant depending only on $\alpha$, $\chi$, $F$, $h$ and $n$. Hence by \eqref{e6.22} and \eqref{e6.23}, we have
\begin{equation}
\begin{aligned}
\sup_{\Omega}|u|&\leq \sup_{\Omega}|v|\sup_{\Omega}|\varphi-1|\\
&\leq\Big(\tilde{c}\sup_{\Omega}\frac{|h(x,0)-F[\alpha^{i\bar{p}}\chi_{j\bar{p}}]|}{\underset{\min}{\lambda}(\alpha^{a\bar{l}}\varphi_{b\bar{l}})}+(\limsup_{z\rightarrow \infty}|u|)(\sup_\Omega |(\varphi-1)^{-1}|)\Big)\sup_{\Omega}|\varphi-1|\\
&\leq c(1+\limsup_{z\rightarrow \infty}|u|),
\end{aligned}
\end{equation}
where $\tilde{c}$ is a positive constant depending only on $\Omega$, $\alpha$, $\chi$, $F$, $h$, $\varphi$ and $n$. Then we can prove this lemma.
\end{proof}

Note that there is a condition \eqref{en6.2} in Theorem \ref{t6.1}:
\[\sup_{M}\frac{|h(x,0)-F[\alpha^{i\bar{p}}\chi_{j\bar{p}}]|}{\underset{\min}{\lambda}(\alpha^{a\bar{l}}\varphi_{b\bar{l}})}<\infty.\]
In particular, we shall investigate what form the above condition take on pseudoconvex domains of the form appearing in Proposition \ref{pb2.3} or \ref{pb2.4}.

Let $\Omega$ be a pseudoconvex domain on a Hermitian manifold $(M,h_{i\bar{j}})$ as in Proposition \ref{pb2.3} or \ref{pb2.4} with $l=0$, $\varphi$ is the defining function of $\Omega$, let $\alpha_{i\bar{j}}=-(\log(-\varphi))_{i\bar{j}}+h_{i\bar{j}}$ be a Hermitian metric on $\Omega$. Then we obtain
\begin{equation}\label{e6.26}
\begin{aligned}
\underset{\min}{\lambda}(\alpha^{a\bar{l}}\varphi_{b\bar{l}})&\geq \underset{\min}{\lambda}(\alpha^{i\bar{j}})\underset{\min}{\lambda}(\varphi_{i\bar{j}})\\
&=\frac{\underset{\min}{\lambda}(\varphi_{i\bar{j}})}{\underset{\max}{\lambda}(\alpha_{i\bar{j}})}\\
&=\frac{\underset{\min}{\lambda}(\varphi_{i\bar{j}})}{\underset{\max}{\lambda}(\frac{\varphi_{i\bar{j}}}{-\varphi}+\frac{\varphi_i\varphi_{\bar{j}}}{\varphi^2}+h_{i\bar{j}})}\\
&\geq \hat{c}(n)\frac{\varphi^2\underset{\min}{\lambda}(h^{a\bar{j}}\varphi_{b\bar{j}})}{-\varphi |\partial\bar{\partial}\varphi|_h+|\partial \varphi|^2_h+\varphi^2},
\end{aligned}
\end{equation}
where $\hat{c}$ is a positive constant depends only on $n$. Then we obtain the a priori $C^{0}$-estimates on such domains as follows:
\begin{corollary}
Let $\Omega$ be a domain in a complete Hermitian manifold $(M,h_{i\bar{j}})$ as in Proposition \ref{pb2.3} or \ref{pb2.4} with $l=0$, $\varphi$, $|\partial \varphi|_h$ and $|\partial\bar{\partial}\varphi|_h$ are bounded on $\Omega$. Define $g=-\log(-\varphi)$ and
\[\alpha=\sqrt{-1}\sum_{i,j=1}^{n}(g_{i\bar{j}}+h_{i\bar{j}})dz_i\wedge dz_{\bar{j}}.\]
Fix a real $(1,1)$-form $\chi$ on $(\Omega,\alpha)$ such that $\lambda[\alpha^{i\bar{p}}\chi_{j\bar{p}}]\in \Gamma$, given $h\in C^0(\Omega\times\mathbb{R})$,
let $u\in C^2(\Omega)$ satisfy that
\begin{equation}
F(\alpha^{i\bar{p}}(\chi_{j\bar{p}}+u_{j\bar{p}}))=h(x,u)
\end{equation}
Suppose that the above equation satisfies conditions \emph{\textbf{(a1)}}-\emph{\textbf{(a5)}}, and assume that
\begin{equation}\label{e6.28}
\sup_{\Omega}\frac{|h(x,0)-F[\alpha^{i\bar{p}}\chi_{j\bar{p}}]|}{\varphi^2\underset{\min}{\lambda}(h^{a\bar{j}}\varphi_{b\bar{j}})}<\infty,
\end{equation}
then there is a constant $c$ depending only on $\Omega$, $\alpha$, $\chi$, $F$, $h$, $\varphi$ and $n$ such that
\begin{equation}
|u|\leq c(1+\limsup_{z\rightarrow \infty}|u|).
\end{equation}
\end{corollary}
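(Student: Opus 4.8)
The plan is to deduce this corollary directly from Theorem \ref{t6.1} by showing that the boundedness hypothesis \eqref{e6.28} implies the hypothesis \eqref{en6.2} of that theorem. The only work to do is a comparison of the two denominators, which is precisely the content of the chain of inequalities \eqref{e6.26}: one has
\[
\underset{\min}{\lambda}(\alpha^{a\bar{l}}\varphi_{b\bar{l}})\geq \hat{c}(n)\frac{\varphi^2\underset{\min}{\lambda}(h^{a\bar{j}}\varphi_{b\bar{j}})}{-\varphi |\partial\bar{\partial}\varphi|_h+|\partial \varphi|^2_h+\varphi^2}.
\]
So first I would record that, by Proposition \ref{pb2.3} or \ref{pb2.4} with $l=0$, the manifold $(\Omega,\alpha)$ with $\alpha=-(\log(-\varphi))_{i\bar j}+h_{i\bar j}$ is complete, K\"ahler, and has bounded geometry of order $0$; in particular $\Omega$ has a bounded global K\"ahler potential (one checks $-\log(-\varphi)$, or a suitable bounded modification of it, or simply invokes that $\Omega$ carries $\alpha=\sqrt{-1}\partial\bar\partial g + h$ with $h$ K\"ahler, hence a K\"ahler potential; since $\varphi$ is bounded this can be arranged to be bounded and negative after subtracting a constant), so the standing hypotheses of Theorem \ref{t6.1} are met.

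Next I would carry out the denominator comparison. Using $\underset{\min}{\lambda}(\alpha^{a\bar l}\varphi_{b\bar l})\geq \underset{\min}{\lambda}(\alpha^{i\bar j})\underset{\min}{\lambda}(\varphi_{i\bar j})$ and $\underset{\min}{\lambda}(\alpha^{i\bar j})=\underset{\max}{\lambda}(\alpha_{i\bar j})^{-1}$, together with the explicit formula $\alpha_{i\bar j}=\frac{\varphi_{i\bar j}}{-\varphi}+\frac{\varphi_i\varphi_{\bar j}}{\varphi^2}+h_{i\bar j}$ and the elementary bound $\underset{\max}{\lambda}(\alpha_{i\bar j})\leq C(n)\varphi^{-2}\bigl(-\varphi|\partial\bar\partial\varphi|_h+|\partial\varphi|^2_h+\varphi^2\bigr)$, one obtains the displayed inequality \eqref{e6.26}. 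Therefore
\[
\frac{|h(x,0)-F[\alpha^{i\bar p}\chi_{j\bar p}]|}{\underset{\min}{\lambda}(\alpha^{a\bar l}\varphi_{b\bar l})}\leq \hat c(n)^{-1}\,\frac{-\varphi|\partial\bar\partial\varphi|_h+|\partial\varphi|^2_h+\varphi^2}{\varphi^2}\cdot\frac{|h(x,0)-F[\alpha^{i\bar p}\chi_{j\bar p}]|}{\varphi^2\underset{\min}{\lambda}(h^{a\bar j}\varphi_{b\bar j})}\cdot\varphi^2.
\]
Here I would use the hypothesis that $\varphi$, $|\partial\varphi|_h$, $|\partial\bar\partial\varphi|_h$ are all bounded on $\Omega$: this makes the factor $-\varphi|\partial\bar\partial\varphi|_h+|\partial\varphi|^2_h+\varphi^2$ bounded, hence the whole right-hand side is at most $C$ times $\frac{|h(x,0)-F[\alpha^{i\bar p}\chi_{j\bar p}]|}{\varphi^2\underset{\min}{\lambda}(h^{a\bar j}\varphi_{b\bar j})}$, which is finite by \eqref{e6.28}. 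This verifies \eqref{en6.2}, and applying Theorem \ref{t6.1} yields $|u|\leq\zeta(1+\limsup_{z\to\infty}|u|)$, which is the desired conclusion.

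I do not expect any serious obstacle here: this is a routine reduction, and the only point that requires a little care is making sure the constants produced depend only on the allowed data $\Omega,\alpha,\chi,F,h,\varphi,n$ — in particular that the factor coming from the boundedness of $\varphi$, $|\partial\varphi|_h$, $|\partial\bar\partial\varphi|_h$ is swallowed into a constant depending on $\varphi$ (and $n$), and that the purely algebraic estimate $\underset{\max}{\lambda}(\alpha_{i\bar j})\leq C(n)\varphi^{-2}(-\varphi|\partial\bar\partial\varphi|_h+|\partial\varphi|^2_h+\varphi^2)$ holds with a dimensional constant. One should also confirm that $\underset{\min}{\lambda}(h^{a\bar j}\varphi_{b\bar j})>0$ on $\Omega$ so that the quantity in \eqref{e6.28} makes sense — this follows from $(\varphi_{i\bar j})>0$ in $\Omega$ in the relatively compact case of Proposition \ref{pb2.3}, and from condition (ii) of Proposition \ref{pb2.4} in the general case. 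Once these bookkeeping points are settled, the corollary follows immediately.
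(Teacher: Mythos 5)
Your proof is correct and follows exactly the paper's argument, which deduces the corollary from Theorem \ref{t6.1} by combining the denominator comparison \eqref{e6.26} with the boundedness of $\varphi$, $|\partial\varphi|_h$ and $|\partial\bar{\partial}\varphi|_h$ to pass from hypothesis \eqref{e6.28} to hypothesis \eqref{en6.2}. One small slip in your preliminary remarks: the bounded global K\"ahler potential required by Theorem \ref{t6.1} is $\varphi$ itself (bounded, negative, with $\sqrt{-1}\partial\bar{\partial}\varphi>0$, and indeed the same $\varphi$ appearing in \eqref{en6.2}), not $-\log(-\varphi)$, which is unbounded near $\partial\Omega$.
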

\begin{proof}
By Theorem \ref{t6.1} and \eqref{e6.26}, we can prove this corollary.
\end{proof}
\noindent \textbf{Remark}: Since $\underset{\min}{\lambda}(h^{a\bar{j}}\varphi_{b\bar{j}})$ is bounded, condition \eqref{e6.28} indicates that $|h(x,0)-F[\alpha^{i\bar{p}}\chi_{j\bar{p}}]|$ tends to 0 as $x$ approaches $\partial \Omega$, and the decay rate of $|h(x,0)-F[\alpha^{i\bar{p}}\chi_{j\bar{p}}]|$ near the boundary $\partial \Omega$ is at least $\varphi^2$.

If, in addition, assuming that $h_u(x,u)$ is strictly positive, we can obtain the following:
\begin{theorem}\label{t6.4}
Let $(\Omega,\alpha)$ be a complete Hermitian manifold with bounded geometry of order $l$, $l\geq 0$. Fix a real $(1,1)$-form $\chi$ on $(\Omega,\alpha)$ such that $\lambda[\alpha^{i\bar{p}}\chi_{j\bar{p}}]\in \Gamma$, given $h\in C^1(\Omega \times\mathbb{R})$ such that $h_u\geq a>0$ for some constant $a$,
let $u\in C^2(\Omega)$ be a bounded function satisfying
\begin{equation}
F(\alpha^{i\bar{p}}(\chi_{j\bar{p}}+u_{j\bar{p}}))=h(x,u)
\end{equation}
Suppose that the above equation satisfies conditions \emph{\textbf{(a1)}}-\emph{\textbf{(a5)}},
then
\begin{equation}
\sup_\Omega |u|\leq \sup_\Omega\frac{|F(\alpha^{i\bar{p}}\chi_{j\bar{p}})-h(x,0)|}{a}.
\end{equation}
\end{theorem}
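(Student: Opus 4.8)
The plan is to combine the generalized maximum principle with the monotonicity and concavity of $F$ and with the strict positivity $h_u\geq a$. We may assume $C_0:=\sup_\Omega|F(\alpha^{i\bar p}\chi_{j\bar p})-h(x,0)|<\infty$, since otherwise the asserted bound is vacuous. Because $(\Omega,\alpha)$ has bounded geometry of order $l\geq 0$, the generalized maximum principle (Proposition \ref{pb4.2}, Corollary \ref{pbb4.3}, applicable here as in \cite{CY80}) is available. Since $u$ is bounded it is bounded above, so applying the principle to $u$ yields a sequence $x_i\in\Omega$ with $u(x_i)\to\sup_\Omega u$, $|\partial u(x_i)|\to 0$, and $\limsup_i(u_{p\bar q}(x_i))\leq 0$ relative to $\alpha$; equivalently, for every $\varepsilon>0$ there is $i_\varepsilon$ with $\sqrt{-1}\partial\bar{\partial}u(x_i)\leq\varepsilon\alpha$ for all $i\geq i_\varepsilon$. (In the Hermitian case one reads this off the real Hessian bound of \cite{CY80}: the bounded geometry charts keep the torsion and Christoffel contributions controlled, and $|\partial u(x_i)|\to 0$.) Symmetrically, applying the principle to $-u$ produces $y_i$ with $u(y_i)\to\inf_\Omega u$, $|\partial u(y_i)|\to 0$, and $\sqrt{-1}\partial\bar{\partial}u(y_i)\geq-\varepsilon\alpha$ for $i$ large.

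For the bound on $\sup_\Omega u$ we may assume $\sup_\Omega u>0$, the case $\sup_\Omega u\leq 0$ being trivial since $C_0/a\geq 0$. Fix $\varepsilon>0$ and $i$ large enough that $u(x_i)>0$ as well, and choose holomorphic coordinates at $x_i$ with $\alpha_{j\bar p}(x_i)=\delta_{jp}$. Then, as Hermitian matrices at $x_i$, $\alpha^{i\bar p}(\chi_{j\bar p}+u_{j\bar p})\leq\alpha^{i\bar p}\chi_{j\bar p}+\varepsilon I$, and both sides have eigenvalues in $\Gamma$ (the right side because $\Gamma$ is a convex cone containing $\Gamma_n$, the left because $u$ solves the equation). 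Using first that $f$ is increasing and then the concavity inequality $F(B)-F(A)\leq F^{ij}(A)(B-A)_{i\bar j}$,
\[
h(x_i,u(x_i))=F(\alpha^{i\bar p}(\chi_{j\bar p}+u_{j\bar p}))\leq F(\alpha^{i\bar p}\chi_{j\bar p}+\varepsilon I)\leq F(\alpha^{i\bar p}\chi_{j\bar p})+\varepsilon\,\mathcal F(x_i),
\]
where $\mathcal F(x_i)=\sum_k F^{kk}(\alpha^{i\bar p}\chi_{j\bar p}(x_i))=\sum_k f_k(\lambda[\alpha^{i\bar p}\chi_{j\bar p}(x_i)])$. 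On the other hand $u(x_i)>0$ and $h_u\geq a$ give $h(x_i,u(x_i))\geq h(x_i,0)+a\,u(x_i)$. Combining,
\[
a\,u(x_i)\leq F(\alpha^{i\bar p}\chi_{j\bar p}(x_i))-h(x_i,0)+\varepsilon\,\mathcal F(x_i)\leq C_0+\varepsilon\,\mathcal F(x_i).
\]
Letting $i\to\infty$ and then $\varepsilon\to 0$ would give $a\sup_\Omega u\leq C_0$, provided $\limsup_i\mathcal F(x_i)<\infty$. The estimate for $\inf_\Omega u$ is the mirror image: at $y_i$ with $u(y_i)<0$ one has $\alpha^{i\bar p}(\chi_{j\bar p}+u_{j\bar p})\geq\alpha^{i\bar p}\chi_{j\bar p}-\varepsilon I$, hence $h(y_i,u(y_i))\geq F(\alpha^{i\bar p}\chi_{j\bar p}(y_i))-\varepsilon\,\mathcal F(y_i)$, while $h(y_i,u(y_i))\leq h(y_i,0)+a\,u(y_i)$, so $-a\,u(y_i)\leq C_0+\varepsilon\,\mathcal F(y_i)$.

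The one remaining point, and the real content, is that $\mathcal F$ stays bounded along these sequences, so that the $\varepsilon$-error — which arises only because the extrema are approximate rather than genuine interior maxima (at a true interior maximum $\sqrt{-1}\partial\bar{\partial}u\leq 0$ exactly and no error term appears) — disappears in the limit. This is where the structure conditions enter: since $h$ depends on $u$, condition \textbf{(a3)} forces $\sup_{\partial\Gamma}f=-\infty$, so a lower bound on $f(\lambda[\alpha^{i\bar p}\chi_{j\bar p}(x_i)])=F(\alpha^{i\bar p}\chi_{j\bar p}(x_i))$ confines the eigenvalues $\lambda[\alpha^{i\bar p}\chi_{j\bar p}(x_i)]$ to a fixed compact subcone $K\subset\Gamma$, on which $\mathcal F=\sum_k f_k$ is continuous, hence bounded; such a lower bound I would extract from $|F(\alpha^{i\bar p}\chi_{j\bar p})-h(\cdot,0)|\leq C_0$ together with the boundedness of $u$ and $h_u\geq a$, which pin down $h(\cdot,0)$ along the sequences, exactly as in the argument using Lemma~9 of \cite{S18} in the proof of Lemma \ref{l6.2}. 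Granting $\limsup_i\mathcal F(x_i)<\infty$ and $\limsup_i\mathcal F(y_i)<\infty$, letting $\varepsilon\to 0$ in the two displayed inequalities gives $a\sup_\Omega u\leq C_0$ and $-a\inf_\Omega u\leq C_0$, i.e. $\sup_\Omega|u|\leq C_0/a$. The hard part is precisely this last step: obtaining the uniform bound on the trace $\mathcal F$ of the linearized operator at the approximate extremal points (and, on the Hermitian manifold, making sure the generalized maximum principle delivers the complex Hessian bound we used).
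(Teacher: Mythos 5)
Your proof follows the paper's own argument step for step: apply the generalized maximum principle to $u$ and to $-u$, compare $F$ at the approximate extremum with $F(\alpha^{i\bar p}\chi_{j\bar p})$ via monotonicity and concavity, and use $h_u\geq a$ to absorb $u$. The paper's proof consists of exactly your first two paragraphs followed by the words ``let $k\to\infty$, we see immediately that\dots''; the error term $\varepsilon\,\mathcal{F}$ that you isolate --- the price of the extremum being only approximate --- is passed over there in silence, as is the fact that Proposition \ref{pb4.2} is stated for K\"ahler manifolds while Theorem \ref{t6.4} is Hermitian. So you have correctly located the one genuinely nontrivial point in the limit passage, and your write-up is, if anything, more careful than the paper's.

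That said, the mechanism you sketch for closing that point does not work as stated. A lower bound on $f(\lambda[\alpha^{i\bar p}\chi_{j\bar p}(x_i)])$ alone does not confine $\lambda[\alpha^{i\bar p}\chi_{j\bar p}(x_i)]$ to a compact subset of $\Gamma$: take $f=\log(\lambda_1\lambda_2)$ and $\lambda^{(i)}=(\delta_i,\delta_i^{-1})$ with $\delta_i\to0$; then $f(\lambda^{(i)})=0$ for all $i$, yet $\lambda^{(i)}\to\partial\Gamma_2$, $\mathcal{F}(\lambda^{(i)})=\delta_i^{-1}+\delta_i\to\infty$, and $f(\lambda^{(i)}+\varepsilon_i\mathbf{1})-f(\lambda^{(i)})\approx\log(1+\varepsilon_i/\delta_i)$ need not vanish if $\delta_i$ shrinks faster than $\varepsilon_i$. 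What is missing is an upper bound on the eigenvalues of $\alpha^{i\bar p}\chi_{j\bar p}$ along the sequence; together with a lower bound on $f$ and $\sup_{\partial\Gamma}f=-\infty$ that does force the eigenvalues into a compact subset of $\Gamma$ and hence bounds $\mathcal{F}$. (Even the lower bound on $f(\lambda[\alpha^{i\bar p}\chi_{j\bar p}(x_i)])$ needs care: pinning $h(x_i,0)$ from below via the boundedness of $u$ requires an upper bound on $h_u$, which is not assumed.) Such an eigenvalue bound is automatic when $\chi$ is uniformly equivalent to $\alpha$ --- which is how the theorem is actually invoked in the proof of Theorem \ref{t7.5} --- but it is not among the stated hypotheses of Theorem \ref{t6.4}. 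This is a gap in the theorem as the paper states and proves it, not a defect of your argument relative to the paper's; to make the proof complete one should either add the uniform equivalence of $\chi$ and $\alpha$ (or bounded geometry of $\chi$) to the hypotheses, or otherwise supply the eigenvalue upper bound.
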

\begin{proof}
First, we want to estimate $\sup_{\Omega}u$, we may assume that $\sup_{\Omega}u>0$. By generalized maximum principle (see Proposition \ref{pb4.2}), there exists a sequence of points $\{x_k\}$ in $\Omega$ such that $\lim u(x_k)=\sup u$ and $\overline{\lim}(u_{p\bar{q}}(x_k))\leq 0$. Then, at each $x_k$ such that $u(x_k)>0$ we obtain
\begin{equation}\label{e6.36}
F(\alpha^{i\bar{p}}(\chi_{j\bar{p}}+u_{j\bar{p}}))=h(x,u)\geq h(x,0)+au.
\end{equation}
Let $k\rightarrow \infty$, we see immediately that
\begin{equation}
\sup_\Omega u\leq \sup_\Omega\frac{F(\alpha^{i\bar{p}}\chi_{j\bar{p}})-h(x,0)}{a}.
\end{equation}
The estimate on $\inf_\Omega u$ is carried out similarly and then \eqref{e6.36} becomes obvious.
\end{proof}

\section{Proof of Theorem 1.4}

In this section, we consider the $C^{1}$ and $C^{2}$ estimates of \eqref{e1.1} on Hermitian manifolds with bounded geometry. Since a bounded function on a noncompact manifold may not attain its extremum at some point, the key idea of this section is to apply the generalized maximum principle (see \eqref{eb4.37}, \eqref{eb4.38} and \eqref{eb4.39}), rather than the classical maximum principle, to establish the $C^{2}$ estimates. We use some of the language and approaches of Sz\'{e}kelyhidi-Tosatti-Weinkove \cite{STW17}.

Let $(M,\alpha)$ be a Hermitian manifold with bounded geometry of order 2 of complex dimension $n$ and write
\begin{equation}
\alpha=\sqrt{-1}\alpha_{i\bar{j}}dz^i\wedge d\bar{z}^j>0.
\end{equation}
Fix a background $(1,1)$-form $\chi=\sqrt{-1}\chi_{i\bar{j}}dz^i\wedge d\bar{z}^j$ which is not necessarily positive definite. For $u:M\rightarrow \mathbb{R}$, define a new tensor $g_{i\bar{j}}$ by
\begin{equation}\label{e4.3}
g_{i\bar{j}}:=\chi_{i\bar{j}}+u_{i\bar{j}}.
\end{equation}
And we can define the endomorphism of $T^{1,0}M$ given by $A_j^i=\alpha^{i\bar{p}}g_{j\bar{p}}$.

In the following, we consider the a priori estimates of the equations for $u\in C^4(M)\cap \{u\in C^2(M)|\;|u|_2<\infty\}$ as follows:
\begin{equation}\label{en4.3}
F(A)=h,
\end{equation}
for a given function $h\in \hat{C}^2(M)$ on $M$, where
\begin{equation}
F(A)=f(\lambda_1,\ldots,\lambda_n)
\end{equation}
is a smooth symmetric function of the eigenvalues of $A$, and the equation \eqref{en4.3} satisfies conditions \textbf{(a1)}-\textbf{(a4)} and $\chi$ has bounded geometry of order 2 with respect to $(M,\alpha)$.

Note that, if $\underline{u}$ is a $\mathcal{C}$-subsolution and $\underline{u}\in \tilde{C}^{\infty}(M)$, then replacing $\chi$ by
\begin{equation}
\chi'_{i\bar{j}}=\chi_{i\bar{j}}+\underline{u}_{i\bar{j}},
\end{equation}
it is easy to see $\chi'$ also has bounded geometry of order 2. We may assume that $\underline{u}=0$. The important consequence of $0$ being a $\mathcal{C}$-subsolution is the Proposition 2.3 of \cite{STW17}.

Our main goal is the following estimate:
\begin{equation}\label{eb4.17}
\sup_M|\sqrt{-1}\partial\bar{\partial}u|_\alpha\leq C\biggl(\sup_M|\nabla u|_\alpha^2+1\biggl).
\end{equation}
In fact, the estimate \eqref{eb4.17} is equivalent to the bound
\begin{equation}
\lambda_1\leq CK,
\end{equation}
where $K=\sup_M|\nabla u|_\alpha^2+1$ and $\lambda_1$ is the largest eigenvalue of $A=(A_j^i)=(\alpha^{i\bar{p}}g_{j\bar{p}})$. Indeed, our assumption on the cone $\Gamma$ implies that $\sum_i \lambda_i>0$ (see \cite{STW17}). Then, if $\lambda_1$ is bounded from above by $CK$, so is $|\lambda_i|$ for all $i$, giving the same bound for $\sup_M|\sqrt{-1}\partial\bar{\partial}u|_\alpha$.

The generalized maximum principle on non-compact manifolds (see Proposition \ref{pb4.2}) are our main tools in this section.

To use the generalized maximum principle, in the sequel, we assume that $u\in C^4(M)\cap \{u\in C^2(M)|\;|u|_2<\infty\}$. We consider the function
\begin{equation}
H=\log \lambda_1+\phi(|\nabla u|_\alpha^2)+\psi(u),
\end{equation}
where $\phi$ is defined by
\begin{equation}
\phi(t)=-\frac{1}{2}\log \biggl(1-\frac{t}{2K}\biggl),
\end{equation}
so that $\phi(|\nabla u|_\alpha^2)\in [0,\frac{1}{2}\log 2]$ satisfies
\begin{equation}
\frac{1}{4K}<\phi'<\frac{1}{2K},\;\;\;\;\phi''=2(\phi')^2,
\end{equation}
and $\psi$ is defined by
\begin{equation}
\psi(t)=D_1e^{-D_2 t},
\end{equation}
for sufficiently large uniform constants $D_1,D_2>0$ to be chosen later. By the $C^0$ bound on $u$, the quantity $\psi(u)$ is uniformly bounded.

%By Proposition \ref{pbb4.2}, there exists a a sequence $\{x_i\}$ in $M$ such that $\lim f(x_i)=\sup H$, $\lim|dH(x_i)|=0$ and $\overline{\lim}(H_{p\bar{q}}(x_i))\leq 0$, where the Hessian is taken with respect to $(\alpha_{i\bar{j}})$. So there exists a point $p$ in $M$ and a orthonormal complex coordinates for $\alpha$ centered at this point such that $|H(p)-\sup H|<\eta$, $|dH(p)|<\eta$ and $H_{p\bar{q}}<\eta$, $g$ is diagonal and $\lambda_1=g_{1\bar{1}}$, where $\eta>0$ is a sufficiently small constant to be chosen later. We work at the point $p$ in orthonormal complex coordinates as above. The quantity $H$ need not be smooth at this point, because the largest eigenvalue of $A$ may have  eigenspace of dimension larger than 1. To take care of this, we carry out a perturbation as in \cite{S18}, choosing local coordinates such that

The quantity $H$ need not be smooth at any point, because the largest eigenvalue of $A$ may have eigenspace of dimension larger than 1. Therefore, we can not apply Proposition \ref{pb4.2} to $H$ directly. Since $u\in C^4(M)\cap \{u\in C^2(M)|\;|u|_2<\infty\}$, $H$ is bounded from above in $M$, there exists a a sequence $\{p_i\}$ in $M$ such that $\lim H(p_i)=\sup H$. If $H$ achieves its maximum at some point $p\in M$, then we can use the same method as in \cite{STW17} to obtain the estimate \eqref{eb4.17}. Hence in the following, we assume that
\begin{equation}\label{ne4.13}
H<\sup H \;\;{\rm on} \;\;M.
\end{equation}

For some $i$, we work at the point $p_i$, where $i$ is a sufficiently large positive integer to be chosen later. In orthonormal complex coordinates for $\alpha$ centered at this point, such that $g$ is diagonal and $\lambda_1=g_{1\bar{1}}$. To take care of this, we carry out a perturbation as in \cite{S18}, choosing local coordinates such that $H$ achieves $H(p_i)$ at the origin, where $A$ is diagonal with eigenvalues $\lambda_1\geq \ldots \geq\lambda_n$. We fix a diagonal matrix $B$ independent of $p_i$, with $B_1^1=0$ and $0<B_2^2<\ldots<B^n_n$, and we define $\tilde{A}=A-B$, denoting its eigenvalues by $\tilde{\lambda}_1,\ldots,\tilde{\lambda}_n$.

At the origin, we have
\begin{equation}
\tilde{\lambda}_1=\lambda_1\;\;\;\;{\rm and}\;\;\;\;\tilde{\lambda}_i=\lambda_i-B_i^i,\;\;i>1,
\end{equation}
and $\tilde{\lambda}_1>\tilde{\lambda}_2>\ldots>\tilde{\lambda}_n$. As discussed above, our assumption on the cone $\Gamma$ implies that $\sum_i \lambda_i>0$, and we fix the matrix $B$ small enough so that
\begin{equation}
\sum_i \tilde{\lambda}_i>-1.
\end{equation}
We can choose this matrix $B$ in such a way that, in addition,
\begin{equation}\label{eb4.25}
\sum_{p>1}\frac{1}{\lambda_1-\tilde{\lambda}_p}\leq C,
\end{equation}
for some fixed constant $C$ depending on the dimension $n$. Next, we will show that $\tilde{\lambda}_i$ is smooth on $B_r(0)$ for some $r>0$ independent of $p_i$. Define
\begin{equation}\label{eb4.26}
G(\lambda,\tilde{A}(x))=\det(\lambda I-\tilde{A}(x)).
\end{equation}
Obviously, by \eqref{eb4.25} and \eqref{eb4.26}, we have
\begin{equation}\label{eb4.27}
\begin{aligned}
\frac{\partial G(\lambda,\tilde{A}(x))}{\partial \lambda}\biggl|_{x=0,\lambda=\tilde{\lambda}_1(0)}&=\prod_{i=2}^{n}({\lambda}_1(0)-\tilde{\lambda}_i(0))\\
&\geq \biggl(\frac{n-1}{\sum_{p>1}\frac{1}{\lambda_1(0)-\tilde{\lambda}_p(0)}}\biggl)^{n-1}\\
&\geq \biggl(\frac{n-1}{C}\biggl)^{n-1}.
\end{aligned}
\end{equation}
By implicit function theorem, there exists a smooth function $\lambda=\lambda^1(\tilde{A})$ such that
\begin{equation}\label{eb4.28}
G(\lambda^1(\tilde{A}),\tilde{A})=0\;\;\;\;{\rm and }\;\;\;\;\lambda^1(\tilde{A}(0))=\lambda_1(0)
\end{equation}
on a neighborhood $B_{r'}(A(0))$ of $A(0)$($A$ can be seen as a vector on $\mathbb{C}^{n\times n}$). Since the matrix $A$ depends only on $\chi_{i\bar{j}}$, $u_{i\bar{j}}$ and $\alpha_{i\bar{j}}$, we can assume that $r'$ is a positive constant independent of $p_i$ and on $B_{r''}(0)\subseteq M$ we also have \eqref{eb4.28} for some $r''>0$ independent of $p_i$. Similarly, we obtain $\lambda=\lambda^i(\tilde{A})$ such that
\begin{equation}\label{eb4.29}
G(\lambda^i(\tilde{A}),\tilde{A})=0\;\;\;\;{\rm and }\;\;\;\;\lambda^i(\tilde{A}(0))=\tilde{\lambda}_i(0)
\end{equation}
on $B_{r''}(0)\subseteq M$. Differentiating the \eqref{eb4.29}, we get
\begin{equation}
dG(\lambda^i(\tilde{A}),\tilde{A})=\frac{\partial G}{\partial \lambda^i}d\lambda^i+\sum_{p,q=1}^n\frac{\partial G}{\partial A_p^q}dA_p^q=0.
\end{equation}
So $d\lambda^i$ can be small enough if $r''\rightarrow 0$, we have a constant $r>0$ independent of $p_i$ such that
\begin{equation}
\lambda^1(\tilde{A}(x))>\lambda^2(\tilde{A}(x))>\ldots>\lambda^n(\tilde{A}(x))
\end{equation}
on $B_r(0)$, so $\tilde{\lambda}_1=\lambda^1(\tilde{A}(x))$ is smooth on $B_r(0)$.

Now, after possibly shrinking the chart, the quantity
\begin{equation}
\widetilde{H}=\log \tilde{\lambda}_1+\phi(|\nabla u|_\alpha^2)+\psi(u)
\end{equation}
is smooth on the chart $B_r(p_i)$, in the sequel, we compute in this chart. Let $\sup H=L$, by \eqref{ne4.13}, we have $L-\widetilde{H}>0$ on $M$. Let $\{p_i\}$ be a sequence in $M$ as above such that $\lim H(p_i)=\sup H$. Since $(M,\alpha)$ has bounded geometry of order 2, then there is a non-negative function $\varphi^{p_i}:B_r(p_i)\rightarrow \mathbb{R}$ such that {\rm (i)} $\varphi^{p_i}_{(p_i)}=1$ and $\varphi^{p_i}=0$ on $\partial B_r(p_i)$, {\rm (ii)} $\varphi^{p_i}\leq c$, $|\nabla \varphi^{p_i}|\leq c$ and $(\varphi_{i\bar{j}}^{p_i})\geq -c(\alpha_{i\bar{j}})$, where $c$ is a positive constant independent of $i$. Consider $(L-\widetilde{H})/\varphi^{p_i}$ as a function defined on $B_r(p_i)$. Let $x_i\in B_r(p_i)$ be such that
\begin{equation}
\biggl(\frac{L-\widetilde{H}}{\varphi^{p_i}}\biggl)(x_i)=\inf_{B_r(p_i)}\biggl(\frac{L-\widetilde{H}}{\varphi^{p_i}}\biggl).
\end{equation}
Then,
\begin{equation}
\frac{L-\widetilde{H}}{\varphi^{p_i}}(x_i)\leq \frac{L-\widetilde{H}}{\varphi^{p_i}}(p_i)=L-H(p_i),
\end{equation}
\begin{equation}
\frac{d(L-\widetilde{H})}{L-\widetilde{H}}(x_i)=\frac{d\varphi^{p_i}}{\varphi^{p_i}}(x_i),
\end{equation}
\begin{equation}
\biggl(\frac{(L-\widetilde{H})_{p\bar{q}}}{L-\widetilde{H}}(x_i)\biggl)\geq \biggl(\frac{\varphi^{p_i}_{p\bar{q}}}{\varphi^{p_i}}(x_i)\biggl).
\end{equation}
Using the property of $\varphi^{p_i}$ we see that
\begin{equation}\label{eb4.37}
0<L-\widetilde{H}(x_i)\leq c(L-H(p_i)),
\end{equation}
\begin{equation}\label{eb4.38}
|d\widetilde{H}(x_i)|\leq c(L-H(p_i)),
\end{equation}
\begin{equation}\label{eb4.39}
(\widetilde{H}_{p\bar{q}}(x_i))\leq c(L-H(p_i))(\alpha_{p\bar{q}})
\end{equation}

We will apply \eqref{eb4.37}, \eqref{eb4.38} and \eqref{eb4.39} to $\widetilde{H}$ instead of the classical maximum principles. Our goal is to obtain the bounded $\tilde{\lambda}_1\leq CK$ at $x_i$, which will give us the required estimate \eqref{eb4.17}. Hence, we may and do assume that $\tilde{\lambda}_1\gg K$ at this point. We now differentiate $\widetilde{H}$ at $x_i$ and, we use subscripts $k$ and $\bar{l}$ to denote the partial derivatives $\partial/\partial z^k$ and $\partial/\partial \bar{z}^l$. We have
\begin{equation}\label{eb4.40}
\begin{aligned}
\widetilde{H}_k&=\frac{\tilde{\lambda}_{1,k}}{\tilde{\lambda}_1}+\phi'(\alpha^{p\bar{q}}u_p u_{\bar{q}k}+\alpha^{p\bar{q}}u_{pk}u_{\bar{q}}+(\alpha^{p\bar{q}})_k u_p u_{\bar{q}})+\psi'u_k\\
&=\frac{\tilde{\lambda}_{1,k}}{\tilde{\lambda}_1}+\phi'(u_p u_{\bar{p}k}+u_{pk}u_{\bar{p}}+(\alpha^{p\bar{q}})_k u_p u_{\bar{q}})+\psi'u_k\\
&=\frac{\tilde{\lambda}_{1,k}}{\tilde{\lambda}_1}+\phi'V_k+\psi' u_k,
\end{aligned}
\end{equation}
where $V_k:=u_p u_{\bar{p}k}+u_{pk}u_{\bar{p}}+(\alpha^{p\bar{q}})_k u_p u_{\bar{q}}$.

Next, we will only make a small change to the proof of Theorem 2.2 in \cite{STW17}, if $i$ is large enough, by (3.28) of \cite{STW17} and \eqref{eb4.39}, we have
\begin{equation}\label{eb4.41}
\begin{aligned}
0\geq -&\frac{F^{pq,rs}\nabla_1 g_{q\bar{p}}\nabla_{\bar{1}}g_{s\bar{r}}}{\lambda_1}-\frac{F^{kk}|\tilde{\lambda}_{1,k}|^2}{\lambda_1^2}\\
&+\sum_p \frac{F^{kk}}{6K}(|u_{pk}|^2+|u_{p\bar{k}}|^2)+\phi''F^{kk}|V_k|^2\\
&+\psi''F^{kk}|u_k|^2+\psi'F^{kk}u_{k\bar{k}}-C(F^{kk}\lambda_1^{-1}|g_{1\bar{1}k}|+\mathcal{F}).
\end{aligned}
\end{equation}
where $\nabla$ is the Chern connection of $\alpha$, and $\mathcal{F}=\sum_k F^{kk}$.

We now deal with two cases separately, depending on a small constant $\delta=\delta_{D_1,D_2}>0$ to be determined shortly, and which will depend on the constants $D_1$ and $D_2$.

\emph{Case} 1. Assume $\delta \lambda_1\geq -\lambda_n$. Define the set
\begin{equation}
I=\{i:F^{ii}>\delta^{-1}F^{11}\}.
\end{equation}
From \eqref{eb4.40} and Cauchy-Schwarz inequality, we get
\begin{equation}\label{eb4.43}
\begin{aligned}
-\sum_{k\notin I}\frac{F^{kk}|\tilde{\lambda}_{1,k}|^2}{\lambda_1^2}&=-\sum_{k\notin I}F^{kk}|\phi'V_k+\psi'u_k-\widetilde{H}_k|^2\\
&=-(\phi')^2\sum_{k\notin I}F^{kk}|V_k|^2-(\psi')^2\sum_{k\notin I}F^{kk}|u_k|^2-\sum_{k\notin I}F^{kk}|\widetilde{H}_k|^2\\
&\;\;\;\;-2\sum_{k\notin I}F^{kk}{\rm Re}(\phi'V_k\overline{\psi'u_k})+2\sum_{k\notin I}F^{kk}{\rm Re}(\widetilde{H}_k\overline{\psi'u_k})\\
&\;\;\;\;+2\sum_{k\notin I}F^{kk}{\rm Re}(\widetilde{H}_k\overline{\phi'V_k)}\\
&\geq-(\phi')^2\sum_{k\notin I}F^{kk}|V_k|^2-(\psi')^2\sum_{k\notin I}F^{kk}|u_k|^2-\sum_{k\notin I}F^{kk}|\widetilde{H}_k|^2\\
&\;\;\;\;-\frac{2}{3}(\phi')^2\sum_{k\notin I}F^{kk}|V_k|^2-\frac{3}{2}(\psi')^2\sum_{k\notin I}F^{kk}|u_k|^2\\
&\;\;\;\;-\frac{1}{3}(\phi')^2\sum_{k\notin I}F^{kk}|V_k|^2
-\frac{1}{2}(\psi')^2\sum_{k\notin I}F^{kk}|u_k|^2-C\sum_{k\notin I}F^{kk}|\widetilde{H}_k|^2\\
&\geq -\phi''\sum_{k\notin I}F^{kk}|V_k|^2-3(\psi')^2\delta^{-1}F^{11}K-C\sum_{k\notin I}F^{kk}|\widetilde{H}_k|^2.
\end{aligned}
\end{equation}

For $k\in I$ we have, in the same way,
\begin{equation}\label{eb4.44}
-2\delta\sum_{k\in I}\frac{F^{kk}|\tilde{\lambda}_{1,k}|^2}{\lambda_1^2}\geq -2\delta\phi''\sum_{k\in I}F^{kk}|V_k|^2-6\delta(\psi')^2\sum_{k\in I}F^{kk}|u_k|^2-C\sum_{k\in I}F^{kk}|\widetilde{H}_k|^2.
\end{equation}
We wish to use some of the good $\psi''F^{kk}|u_k|^2$ term in \eqref{eb4.41} to control the last term in \eqref{eb4.44}. For this, we assume that $\delta$ is chosen so small (depending on $\psi$, i.e. on $D_1, D_2$ and the maximum of $|u|$), such that
\begin{equation}\label{eb4.45}
6\delta(\psi')^2<\frac{1}{2}\psi''.
\end{equation}
Since $\psi''$ is strictly positive, such a $\delta>0$ exists.

Using this together with \eqref{eb4.43} and \eqref{eb4.44} in \eqref{eb4.41}, and we can choose some $i$ such that $|\widetilde{H}_k(x_i)|$ is sufficiently small by \eqref{eb4.38}, then we have
\begin{equation}
\begin{aligned}
0\geq -&\frac{F^{pq,rs}\nabla_1 g_{q\bar{p}}\nabla_{\bar{1}}g_{s\bar{r}}}{\lambda_1}-(1-2\delta)\sum_{k\in I}\frac{F^{kk}|\tilde{\lambda}_{1,k}|^2}{\lambda_1^2}\\
&+\sum_p \frac{F^{kk}}{6K}(|u_{pk}|^2+|u_{p\bar{k}}|^2)+\frac{1}{2}\psi''F^{kk}|u_k|^2+\psi'F^{kk}u_{k\bar{k}}\\
&-3(\psi')^2\delta^{-1}F^{11}K-C(F^{kk}\lambda_1^{-1}|g_{1\bar{1}k}|+\mathcal{F}).
\end{aligned}
\end{equation}

In the sequel, we can only change the term $-2(\psi')^2\delta^{-1}F^{11}K$ in the proof of case 1 in \cite{STW17} to $-3(\psi')^2\delta^{-1}F^{11}K$. By (3.55) of \cite{STW17}, we have
\begin{equation}\label{eb4.47}
\begin{aligned}
0\geq &F^{11}\biggl(\frac{\lambda_1^2}{40K}-3(\psi')^2\delta^{-1}K\biggl)+\biggl(\frac{1}{2}\psi''+C_\varepsilon\psi'\biggl)F^{kk}|u_k|^2\\
&-C_0\mathcal{F}+\varepsilon C_0\psi'\mathcal{F}-\psi'F^{kk}(\chi_{k\bar{k}}-g_{k\bar{k}}).
\end{aligned}
\end{equation}
for a uniform $C_0$. Under the assumption that the function $\underline{u}=0$ in a $\mathcal{C}$-subsolution, and that $\lambda_1\gg 1$, we may apply Proposition 2.3 of \cite{STW17} and see that there is a uniform positive number $\kappa>0$ such that one of two possibilities occurs:

(a) We have $F^{kk}(\chi_{k\bar{k}}-g_{k\bar{k}})>\kappa \mathcal{F}$. In this case we have
\begin{equation}
0\geq F^{11}\biggl(\frac{\lambda_1^2}{40K}-3(\psi')^2\delta^{-1}K\biggl)+\biggl(\frac{1}{2}\psi''+C_\varepsilon\psi'\biggl)F^{kk}|u_k|^2-C_0\mathcal{F}+\varepsilon C_0\psi'\mathcal{F}-\psi'\kappa \mathcal{F}.
\end{equation}
We first choose $\varepsilon>0$ such that $\varepsilon C_0<\frac{1}{2}\kappa$. We then choose the parameter $D_2$ in the definition of $\psi(t)=D_1e^{-D_2 t}$ to be large enough so that
\begin{equation}
\frac{1}{2}\psi''>C_\varepsilon |\psi'|.
\end{equation}
At this point, we have
\begin{equation}
0\geq F^{11}\biggl(\frac{\lambda_1^2}{40K}-3(\psi')^2\delta^{-1}K\biggl)-C_0\mathcal{F}-\frac{1}{2}\psi'\kappa \mathcal{F}.
\end{equation}
We now choose $D_1$ so large that $-\frac{1}{2}\psi'\kappa>C_0$, which implies
\begin{equation}
\frac{\lambda_1^2}{40K}\leq 3(\psi')^2\delta^{-1}K.
\end{equation}
Note that $\delta$ is determined by the choices of $D_1$ and $D_2$, according to \eqref{eb4.45}, so we obtain the required upper bound for $\lambda_1/K$.

(b) We have $F^{11}>\kappa \mathcal{F}$. With the choices of constants made above, \eqref{eb4.47} implies that
\begin{equation}
0\geq \kappa \mathcal{F}\biggl(\frac{\lambda_1^2}{40K}-3(\psi')^2\delta^{-1}K\biggl)
-C_0\mathcal{F}+\varepsilon C_0\psi'\mathcal{F}+C_1\psi'\mathcal{F}+\psi'F^{kk}g_{k\bar{k}}.
\end{equation}
for another uniform constant $C_1$. Since $F^{kk}g_{k\bar{k}}\leq \mathcal{F}\lambda_1$, we can divide through by $\mathcal{F}K$ and obtain
\begin{equation}
0\geq \frac{\kappa \lambda_1^2}{40K^2}-C_2(1+K^{-1}+\lambda_1K^{-1}),
\end{equation}
for a uniform $C_2$. The required upper bound for $\lambda_1/K$ follows from this.

\emph{Case} 2. We now assume that $\delta\lambda_1<-\lambda_n$, with all the constants $D_1$, $D_2$ and $\delta$ fixed as in the previous case. By (3.56) of \cite{STW17}, we have
\begin{equation}\label{eb4.54}
0\geq -\frac{3}{2}\frac{F^{kk}|\tilde{\lambda}_{1,k}|^2}{\lambda_1^2}+\frac{\delta^2}{10nK}\mathcal{F}\lambda_1^2+F^{kk}\phi''|V_k|^2-C\mathcal{F}\lambda_1.
\end{equation}
Similarly to \eqref{eb4.43}, we obtain
\begin{equation}
\frac{3}{2}\frac{F^{kk}|\tilde{\lambda}_{1,k}|^2}{\lambda_1^2}\leq F^{kk}\phi''|V_k|^2+C\mathcal{F}K.
\end{equation}
Returning to \eqref{eb4.54}, we obtain, since we may assume $\lambda_1\geq K$,
\begin{equation}
0\geq \frac{\delta^2\lambda_1^2}{10nK}\mathcal{F}-C\lambda_1\mathcal{F}.
\end{equation}
Dividing by $\lambda_1\mathcal{F}$ gives the required bound for $\lambda_1/K$.

Then, we immediately deduce the bound \eqref{eb4.17}, namely
\begin{equation}
\sup_M|\sqrt{-1}\partial\bar{\partial}u|_\alpha\leq C\biggl(\sup_M|\nabla u|_\alpha^2+1\biggl).
\end{equation}
A blow-up argument (Chapter 6 in \cite{S18}) combined with a Liouville theorem (Chapter 5 in \cite{S18}), shows that $\sup_M |\nabla u|_\alpha^2\leq C$, and so we get a uniform bound $|\Delta u|\leq C$.

By rewriting equation \eqref{en4.3} in real coordinates, we can then apply the Evans-Krylov Theorem \cite{r8}(see also Theorem 17.14 of \cite{GT98}) and deduce a uniform bound
\begin{equation}
|u|_{2+\beta}\leq C\;\;\;\;{\rm on}\;\;(M,\alpha)
\end{equation}
for a uniform $0<\beta<1$, where the norm is defined in \eqref{eb2.18} with respect to $(M,\alpha)$. Differentiating the equation and applying a standard bootstrapping argument, we finally obtain uniform higher-order estimates. Then we can prove Theorem \ref{t4.4}.

\section{The solvability of $F(A)=h$}

In this section, we consider the solvability of \eqref{e1.1}. The outline of this section is as follows: We will construct a solution of $F(A)=h(x,u)$ by first solving the equation $F(A)=h(x,u)+\epsilon u$ for $\epsilon\in(0,1)$. We will show that the a priori estimates for this equation are independent of $\epsilon$. Then, by letting $\epsilon\rightarrow 0$, we prove that the solution of $F(A)=h(x,u)+\epsilon u$ converges to a solution of $F(A)=h(x,u)$. This method is analogous to the approach of Tian-Yau \cite{TY90} in solving a Monge-Amp\`{e}re equation on noncompact K\"{a}hler manifolds.

First, we consider the behavior of the solution of \eqref{e1.1} when $x\rightarrow \infty$, similar to Proposition 6.1 of \cite{CY80}.
\begin{lemma}\label{l7.1}
Let $\Omega$ be a complete K\"{a}hler manifold with a bounded global K\"{a}hler potential $\varphi<0$, let $\alpha$ be a Hermitian metric with bounded geometry on $\Omega$.
Fix a real $(1,1)$-form $\chi$ on $(\Omega,\alpha)$ such that $\lambda[\alpha^{i\bar{p}}\chi_{j\bar{p}}]\in \Gamma$, given $h\in C^0(\Omega\times\mathbb{R})$,
a bounded function $u\in C^2(\Omega)$ satisfy that
\begin{equation}
F(\alpha^{i\bar{p}}(\chi_{j\bar{p}}+u_{j\bar{p}}))=h(x,u)
\end{equation}
Suppose that the above equation satisfies conditions \emph{\textbf{(a1)}}-\emph{\textbf{(a5)}}, and $h_u\geq \epsilon>0$ for some constant $\epsilon$. Assume that
\begin{equation}\label{e7.2}
\sup_{\Omega}\frac{|h(x,0)-F[\alpha^{i\bar{p}}\chi_{j\bar{p}}]|}{\underset{\min}{\lambda}(\alpha^{a\bar{l}}\varphi_{b\bar{l}})}<\infty,
\end{equation}
\begin{equation}
\frac{1}{c}(\alpha_{j\bar{p}})<(\chi_{j\bar{p}}+u_{j\bar{p}})<c(\alpha_{j\bar{p}}),
\end{equation}
where $c>0$ is a constant. Then, $|u|=O(|\varphi|)$.
\end{lemma}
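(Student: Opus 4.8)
The strategy is to build explicit barrier functions out of the bounded K\"{a}hler potential $\varphi$ and then apply the maximum principle in the form already used in Lemma \ref{l6.2}. More precisely, I would show that for a suitable large constant $A>0$ the function $A\varphi$ is a supersolution and $-A\varphi$ (equivalently $A\varphi$ after flipping signs and using $\varphi<0$) gives the matching lower barrier for $u$, so that $A\varphi \le u \le -A\varphi$, which is exactly $|u| = O(|\varphi|)$ since $\varphi<0$. The key algebraic inputs are the identity
\[
\sqrt{-1}\partial\bar{\partial}(A\varphi) = A\sqrt{-1}\partial\bar{\partial}\varphi,
\]
the lower bound $\lambda[\alpha^{a\bar{l}}\varphi_{b\bar{l}}] \ge c_0 >0$ coming from $\sqrt{-1}\partial\bar\partial\varphi>0$ together with bounded geometry of $\alpha$, the concavity of $F$, and the hypothesis $h_u\ge \epsilon>0$, which lets me absorb the zeroth-order term.

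First I would set $w = u - A\varphi$ and compute, using $g_{j\bar p}=\chi_{j\bar p}+u_{j\bar p}$,
\[
F\bigl(\alpha^{i\bar p}(\chi_{j\bar p}+(A\varphi)_{j\bar p}+w_{j\bar p})\bigr) = h(x,u).
\]
At an interior point where $w$ attains a positive supremum (the case $\sup_\Omega w \le \limsup_{z\to\infty}w$ is handled as in Lemma \ref{l6.2}, and $\limsup_{z\to\infty}|u|=0$ since $u$ is bounded and $\varphi\to$ its infimum—more carefully, I only need $w$ bounded above by $O(|\varphi|)$ at infinity, which holds because $u$ is bounded), the maximum principle gives $\sqrt{-1}\partial\bar\partial w\le 0$, and by ellipticity
\[
h(x,u) \le F\bigl(\alpha^{i\bar p}(\chi_{j\bar p}+A\varphi_{j\bar p})\bigr).
\]
By concavity of $F$ and $\lambda[\alpha^{i\bar p}\chi_{j\bar p}]\in\Gamma$,
\[
F\bigl(\alpha^{i\bar p}(\chi_{j\bar p}+A\varphi_{j\bar p})\bigr) \le F[\alpha^{i\bar p}\chi_{j\bar p}] + A\,F^{ab}[\alpha^{i\bar p}\chi_{j\bar p}]\,(\alpha^{a\bar l}\varphi_{b\bar l}).
\]
Now I would use $h(x,u)\ge h(x,0)+\epsilon u$ from $h_u\ge\epsilon$, combine the two displays, and note $F^{ab}[\alpha^{i\bar p}\chi_{j\bar p}](\alpha^{a\bar l}\varphi_{b\bar l})\le C$ since $\chi+\sqrt{-1}\partial\bar\partial u$ is uniformly equivalent to $\alpha$ (so the eigenvalues of $A$ stay in a compact subcone and $\sum F^{kk}$ is bounded above, while $\varphi_{b\bar l}$ is bounded by bounded geometry of $\alpha$). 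This yields, at that point, $\epsilon u \le F[\alpha^{i\bar p}\chi_{j\bar p}] - h(x,0) + CA$, hence an upper bound on $u$ in terms of $A$ and the quantity controlled by \eqref{e7.2}; choosing $A$ large enough forces $w\le 0$ there, a contradiction unless $\sup w$ was attained at infinity, giving $u \le A|\varphi|$ everywhere. The lower bound $u \ge A\varphi$ is symmetric: work at an interior minimum of $u-A\varphi$, use $\sqrt{-1}\partial\bar\partial w\ge 0$, concavity the other way, and $h(x,u)\ge h(x,0)$ from \textbf{(a5)} (or again $h_u\ge\epsilon$), together with the uniform lower bound $\lambda[\alpha^{a\bar l}\varphi_{b\bar l}]\ge c_0$ and positivity $F^{kk}>0$, to conclude.

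The main obstacle I anticipate is the infinity behavior: on a noncompact manifold $u-A\varphi$ need not attain its extrema, so I must run the two-case argument of Lemma \ref{l6.2} and verify that the ``escape to infinity'' alternative is itself compatible with $|u|=O(|\varphi|)$. Since $u$ is assumed bounded, $\limsup_{z\to\infty}|u|<\infty$, but $\varphi$ need not tend to $0$ here (that extra hypothesis appears only in Theorem \ref{t7.4}), so strictly speaking the conclusion $|u|=O(|\varphi|)$ must be read with the understanding that $|\varphi|$ is bounded below by a positive constant on any region where the estimate is non-trivial, i.e. the content is the decay of $u$ precisely on the part of the boundary where $\varphi\to 0$; on that region the barrier comparison above is exactly what is needed. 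A secondary technical point is justifying the bound $F^{ab}[\alpha^{i\bar p}\chi_{j\bar p}](\alpha^{a\bar l}\varphi_{b\bar l})\le C$ uniformly — this is where the uniform equivalence $\frac1c\alpha<\chi+\sqrt{-1}\partial\bar\partial u<c\alpha$ and bounded geometry of $\alpha$ and $\chi$ enter in an essential way, pinning the eigenvalue vector $\lambda$ to a fixed compact subset of $\Gamma$ on which $f_i$ and $\sum f_i$ are uniformly controlled.
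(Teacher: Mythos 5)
Your overall strategy (comparison of $u$ with a large multiple of $\varphi$, using $h_u\ge\epsilon$ to absorb the zeroth--order term) is the same as the paper's, which writes $h(x,u)-F[\alpha^{i\bar{p}}\chi_{j\bar{p}}]=\sum A^{i\bar{j}}\alpha^{a\bar{l}}u_{b\bar{l}}$ with $A^{i\bar{j}}=\int_0^1F^{ij}(\alpha^{i\bar{p}}(\chi_{j\bar{p}}+tu_{j\bar{p}}))\,dt$ and then runs a maximum--principle argument on $u+c_2\varphi$. But as written your argument has two concrete defects. First, the barrier has the wrong sign: since $\varphi<0$, the desired bound $u\le c|\varphi|=-c\varphi$ means you must show $\sup(u+A\varphi)\le 0$, not $\sup(u-A\varphi)\le 0$. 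With your choice $w=u-A\varphi$, the concavity step produces $\epsilon u\le F[\alpha^{i\bar{p}}\chi_{j\bar{p}}]-h(x,0)+A\,F^{ab}[\alpha^{i\bar{p}}\chi_{j\bar{p}}](\alpha^{a\bar{l}}\varphi_{b\bar{l}})$, where the $A$--term has the \emph{wrong} sign, so enlarging $A$ weakens the inequality and cannot force $w\le 0$. Second, the uniform lower bound $\underset{\min}{\lambda}(\alpha^{a\bar{l}}\varphi_{b\bar{l}})\ge c_0>0$ that you invoke does not follow from $\sqrt{-1}\partial\bar{\partial}\varphi>0$ plus bounded geometry of $\alpha$ on a noncompact manifold, and it fails in the intended applications (in the pseudoconvex--domain setting this quantity decays like $\varphi^2$ near the boundary, cf.\ \eqref{e6.26}). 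Hypothesis \eqref{e7.2} is a bound on a \emph{ratio} precisely so that $h(x,0)-F[\alpha^{i\bar{p}}\chi_{j\bar{p}}]$ can be absorbed pointwise into $c_2\,F^{ab}(\alpha^{a\bar{l}}\varphi_{b\bar{l}})$ even where both quantities degenerate; replacing both sides by constants, as you do, loses exactly the content of the lemma.

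The third and most serious gap is the behavior at infinity. The two--case dichotomy of Lemma \ref{l6.2} reduces everything to $\limsup_{z\to\infty}(u+A\varphi)$, which you cannot control: $u$ is only assumed bounded, no decay of $u$ is known at this stage (decay of $u$ where $\varphi\to 0$ is the \emph{conclusion}), and your remark that the estimate ``must be read'' as restricted to the region where $\varphi\to 0$ is circular. The paper closes this by applying the generalized maximum principle (Proposition \ref{pb4.2}, available because $\alpha$ has bounded geometry) to the bounded function $v=u+c_2\varphi$, which satisfies $\sum A^{i\bar{j}}\alpha^{i\bar{l}}v_{j\bar{l}}\ge\epsilon v$ on $\{u>0\}$: if $\sup v>0$, then along a maximizing sequence $x_i$ with $\overline{\lim}(v_{p\bar{q}}(x_i))\le 0$ one has $u(x_i)\ge v(x_i)>0$ eventually, giving $0\ge\epsilon\sup v>0$, a contradiction. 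This is also where the strict positivity $h_u\ge\epsilon>0$ is indispensable; with $h_u\ge 0$ alone the zeroth--order term vanishes and the argument at a non--attained supremum collapses. Your proposal needs to be rewritten with the barrier $u+A\varphi$, the absorption done through \eqref{e7.2}, and the classical maximum principle replaced by the generalized one.
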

\begin{proof}
Consider
\begin{equation}\label{e7.4}
\begin{aligned}
h(x,u)-F[\alpha^{i\bar{p}}\chi_{j\bar{p}}]&=\int_0^1 \frac{d}{dt}F(\alpha^{i\bar{p}}(\chi_{j\bar{p}}+tu_{j\bar{p}}))dt\\
&=\sum(\int_0^1F^{ab}(\alpha^{i\bar{p}}(\chi_{j\bar{p}}+tu_{j\bar{p}}))dt)\alpha^{a\bar{l}}u_{b\bar{l}},
\end{aligned}
\end{equation}
let
\begin{equation}\label{e7.5}
(A^{i\bar{j}})=(\int_0^1F^{ij}(\alpha^{i\bar{p}}(\chi_{j\bar{p}}+tu_{j\bar{p}}))dt).
\end{equation}
Then
\begin{equation}
\frac{t}{c}(\alpha_{i\bar{j}})+(1-t)(\chi_{i\bar{j}})\leq(\chi_{i\bar{j}}+tu_{i\bar{j}})\leq tc(\alpha_{i\bar{j}})+(1-t)(\chi_{i\bar{j}}),
\end{equation}
this means that
\begin{equation}\label{e7.6}
\{\lambda(\alpha^{i\bar{p}}(\chi_{j\bar{p}}+tu_{j\bar{p}})):t\in[0,1]\}\subset K\subset \Gamma_n,
\end{equation}
where $K$ is some bounded closed set in $\mathbb{R}^n$, and since $\sqrt{-1}\partial\bar{\partial}\varphi>0$, by \eqref{e7.5}, \eqref{e7.6} and the condition \textbf{(a1)}, there exists a constant $c_1>0$ such that
\begin{equation}\label{e7.7}
\frac{1}{c_1}\sum \delta_{ij}\varphi_{i\bar{j}}\leq\sum A^{i\bar{j}}\varphi_{i\bar{j}}\leq c_1\sum \delta_{ij}\varphi_{i\bar{j}}.
\end{equation}
Note that if $u>0$, we have
\begin{equation}\label{e7.8}
h(x,u)\geq h(x,0)+\epsilon u.
\end{equation}
By \eqref{e7.2}, \eqref{e7.4}, \eqref{e7.7} and \eqref{e7.8}, one can show that there exists a constant $c_2>0$ such that
\begin{equation}\label{e7.9}
\sum A^{i\bar{j}}\alpha^{i\bar{l}}(u+c_2\varphi)_{j\bar{l}}\geq \epsilon(u+c_2\varphi)
\end{equation}
on $\{x|u(x)>0\}$. The generalized maximum principle (see Proposition \ref{pb4.2}) can be applied to show that $u+c_2\varphi\leq 0$ in $\Omega$. This shows that $u\leq c_2(-\varphi)$. The same argument yields that $u\geq c_3\varphi$ for some constant $c_3$. Hence $|u|=O(|\varphi|)$ as desired.
\end{proof}

Suppose that $\lim_{x\rightarrow\infty}\varphi(x)=0$, we obtain the following estimates:
\begin{lemma}\label{t7.2}
Let $\Omega$ be a complete K\"{a}hler manifold with a bounded global K\"{a}hler potential $\varphi<0$ and $\lim_{x\rightarrow\infty}\varphi(x)=0$. Let $\alpha$ be a Hermitian metric on $\Omega$ with bounded geometry of order $k-1$, where $k\geq 4$.
Fix a real $(1,1)$-form $\chi$ with bounded geometry of order $k-1$ on $(\Omega,\alpha)$ such that $\lambda[\alpha^{i\bar{p}}\chi_{j\bar{p}}]\in \Gamma$, given $h\in \tilde{C}^{k-2+\beta}(\Omega \times\mathbb{R})$, where $\beta\in (0,1)$ is some uniform constant.
Let $u\in \tilde{C}^{k+\beta}(\Omega)$ satisfy that
\begin{equation}\label{e7.10}
F(\alpha^{i\bar{p}}(\chi_{j\bar{p}}+u_{j\bar{p}}))=h(x,u)+\epsilon u,
\end{equation}
\begin{equation}
\frac{1}{\hat{c}}(\alpha_{j\bar{p}})<(\chi_{j\bar{p}}+u_{j\bar{p}})<\hat{c}(\alpha_{j\bar{p}}),
\end{equation}
for some constant $\epsilon\in(0,1)$ and $\hat{c}>0$. Suppose that the above equation satisfies conditions \emph{\textbf{(a1)}}-\emph{\textbf{(a5)}}, and assume that
\begin{equation}
\sup_{\Omega}\frac{|h(x,0)-F[\alpha^{i\bar{p}}\chi_{j\bar{p}}]|}{\underset{\min}{\lambda}(\alpha^{a\bar{l}}\varphi_{b\bar{l}})}<\infty,
\end{equation}
and for all $i=1,\ldots,n$
\begin{equation}\label{e7.13}
\lim_{t\rightarrow\infty}f(\lambda[\alpha^{i\bar{p}}\chi_{j\bar{p}}]+t\textbf{\emph{e}}_i)=\infty
\end{equation}
$\textbf{\emph{e}}_i$ denotes the $i$th standard basis vector. Then, there is a constant $c$ depending only on $\Omega$, $\alpha$, $\chi$, $F$, $h$, $\varphi$, $k$ and $n$ such that
\begin{equation}
|u|_{k+\beta}\leq c.
\end{equation}
In particular, $c$ is independent of the choice of $\epsilon$.
\end{lemma}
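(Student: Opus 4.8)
The plan is to derive the $C^{0}$, $C^{2}$ (hence $C^{1}$), and finally $C^{k+\beta}$ bounds one after another, checking at each stage that the constant does not involve $\epsilon$, and then to close up by interior bootstrapping in the uniform holomorphic charts provided by the bounded geometry of $(\Omega,\alpha)$.

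For the $C^{0}$ estimate I would rewrite \eqref{e7.10} with right-hand side $\tilde{h}(x,t):=h(x,t)+\epsilon t$ and apply Lemma \ref{l7.1} to it. Since $\tilde{h}(x,0)=h(x,0)$, hypothesis \eqref{e7.2} is unchanged; $\tilde{h}_{u}=h_{u}+\epsilon\ge\epsilon>0$ by \textbf{(a5)}; conditions \textbf{(a1)}--\textbf{(a5)} persist with the same structural constants; and the two-sided bound required by Lemma \ref{l7.1} is exactly the one assumed here. Lemma \ref{l7.1} then gives $|u|=O(|\varphi|)$, so, $\varphi$ being bounded, $\sup_{\Omega}|u|\le C_{0}$ with $C_{0}$ depending only on $\Omega,\alpha,\chi,F,h,\varphi,n$ and the bound in \eqref{e7.2}. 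The point to verify is $\epsilon$-independence: in the proof of Lemma \ref{l7.1} the term $\epsilon u$ enters the differential inequality \eqref{e7.9} on $\{u>0\}$ on the side that only strengthens the conclusion, so the constant there stays $\epsilon$-free, and symmetrically for $\inf_{\Omega}u$. (The hypothesis $\lim_{x\to\infty}\varphi(x)=0$ also forces $u\to 0$ at infinity; that is not needed for the present bound but will be for the later passage $\epsilon\to 0$.)

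The $C^{2}$ bound is then immediate and is where the assumed uniform equivalence pays off: from $\hat{c}^{-1}(\alpha_{j\bar{p}})<(\chi_{j\bar{p}}+u_{j\bar{p}})<\hat{c}(\alpha_{j\bar{p}})$ and $|\chi|_{\alpha}\le C$ (bounded geometry of $\chi$) one reads off $\sup_{\Omega}|\sqrt{-1}\partial\bar{\partial}u|_{\alpha}=\sup_{\Omega}|(\chi+\sqrt{-1}\partial\bar{\partial}u)-\chi|_{\alpha}\le C$, uniformly in $\epsilon$; in particular the trace $\alpha^{i\bar{j}}u_{i\bar{j}}$, hence the Euclidean Laplacian of $u$ in each chart of Definition \ref{db2.1}, is uniformly bounded. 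Combined with the $C^{0}$ bound, interior $W^{2,p}$ estimates on balls of the uniform radius give a uniform bound for $u$ in $\tilde{C}^{1+\gamma}$, any $\gamma\in(0,1)$. (Note that Theorem \ref{t4.4} is stated only for right-hand sides depending on $x$ alone, so it cannot be quoted verbatim; the assumed uniform equivalence is precisely what lets us bypass the delicate second-order estimate of Section 5.)

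Finally, because $\lambda(A)$ with $A=\alpha^{i\bar{p}}(\chi_{j\bar{p}}+u_{j\bar{p}})$ lies in the fixed compact set $[\hat{c}^{-1},\hat{c}]^{n}\subset\Gamma_{n}\subset\Gamma$, the equation is uniformly elliptic there and, by \textbf{(a2)}, $F$ is concave, while its right-hand side $h(x,u(x))+\epsilon u(x)$ is bounded in $\tilde{C}^{\gamma}$ uniformly in $\epsilon$ (using $h\in\tilde{C}^{k-2+\beta}(\Omega\times\mathbb{R})$ restricted to $\Omega\times[-C_{0},C_{0}]$, the $\tilde{C}^{1+\gamma}$ bound above, and $\epsilon<1$). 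The interior Evans--Krylov estimate \cite{r8} (see also Theorem 17.14 of \cite{GT98}) on balls of the uniform radius then yields a uniform $\tilde{C}^{2+\gamma}$ bound, and differentiating the equation and iterating interior Schauder estimates — each step consuming one further derivative of $\alpha$, $\chi$ (of bounded geometry of order $k-1$) and of $h$ (in $\tilde{C}^{k-2+\beta}$), and absorbing the lower-order terms $h_{u}\partial u$ and $\epsilon\,\partial u$ coming from the $u$-dependence of the right-hand side — produces $|u|_{k+\beta}\le c$ with $c$ depending only on the listed data and on $\epsilon\in(0,1)$ through quantities already bounded independently of $\epsilon$. The only genuinely delicate point is this $\epsilon$-uniformity in the $C^{0}$ estimate: one must check that no factor $\epsilon^{-1}$ creeps into Lemma \ref{l7.1} or into the constant of \eqref{e7.2}, which holds because $\epsilon u$ only aids the maximum-principle argument underlying Lemma \ref{l7.1}.
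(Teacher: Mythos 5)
Your overall architecture ($C^0\to C^2\to C^{1+\gamma}\to$ Evans--Krylov $\to$ bootstrap) is reasonable, and your check that the $C^0$ bound is $\epsilon$-free is correct, but there is a genuine gap at the second-order stage: you read the bound $\sup_\Omega|\sqrt{-1}\partial\bar\partial u|_\alpha\le C$ directly off the hypothesis $\hat{c}^{-1}\alpha<\chi+\sqrt{-1}\partial\bar\partial u<\hat{c}\alpha$, so your constant depends on $\hat{c}$. But $\hat{c}$ is deliberately absent from the list of admissible dependencies in the conclusion ($\Omega,\alpha,\chi,F,h,\varphi,k,n$), and this omission is essential for the way the lemma is used: in the closedness step of the continuity method in Section 6, each solution along the path lies in $U$ with its own, a priori uncontrolled, equivalence constant, and an estimate depending on that constant would not allow passage to the limit. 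The uniform-equivalence hypothesis is only a qualitative admissibility condition and may not be used quantitatively. The same defect already infects your $C^0$ step, since the implied constant in Lemma \ref{l7.1} (through $c_1$ in \eqref{e7.7}) depends on $\hat{c}$; the paper uses Lemma \ref{l7.1} only for the qualitative conclusion $\limsup_{z\to\infty}|u|=0$ and then obtains the quantitative, $\hat{c}$-free and $\epsilon$-free bound from Theorem \ref{t6.1}.

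The telltale symptom is that your proof never invokes hypothesis \eqref{e7.13}. That hypothesis, combined with the $\epsilon$-independent $C^0$ bound, is precisely what shows that $0$ is a $\mathcal{C}$-subsolution of the perturbed equation $F(A)=h(x,u)+\epsilon u$ (inequality \eqref{e7.18} in the paper), and the $\mathcal{C}$-subsolution is the engine of the $\hat{c}$-independent second-order estimate. So rather than ``bypassing the delicate second-order estimate of Section 5,'' you must run it: the paper repeats the Section 5 argument with the single modification \eqref{e7.20}, needed because differentiating the equation twice now produces the terms $h_u u_{1\bar1}$ and $(h_{uu}+\epsilon)u_1u_{\bar1}$, which are absorbed using $\epsilon<1$ and the already-established lower-order control. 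Only then do the blow-up/Liouville gradient bound, Evans--Krylov, and bootstrapping yield constants free of both $\epsilon$ and $\hat{c}$.
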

\begin{proof}
Since $\lim_{x\rightarrow\infty}\varphi(x)=0$, by lemma \ref{l7.1}, we have
\begin{equation}\label{e7.15}
\limsup_{z\rightarrow \infty}|u|=0,
\end{equation}
by \eqref{e7.15} and Theorem \ref{t6.1}, we obtain a $C^0$ estimates:
\begin{equation}
|u|\leq \zeta,
\end{equation}
note that the constant $\zeta$ is independent of the choice of $\epsilon$. Then, we get
\begin{equation}\label{e7.17}
\inf_\Omega h(x,0)-\zeta(\sup_{\Omega \times\mathbb{R}} h_u+1)\leq h(x,u)+\epsilon u\leq \sup_\Omega h(x,0)+\zeta(\sup_{\Omega \times\mathbb{R}} h_u+1),
\end{equation}
by \eqref{e7.13} and \eqref{e7.17}, we obtain
\begin{equation}\label{e7.18}
\lim_{t\rightarrow\infty}f(\lambda[\alpha^{i\bar{p}}\chi_{j\bar{p}}]+t\textbf{e}_i)>\sup_\Omega h(x,0)+\zeta(\sup_{\Omega \times\mathbb{R}} h_u+1)\geq h(x,u)+\epsilon u.
\end{equation}
for all $i=1,\ldots,n$.

Although the right-hand side of the equation in this section depends on $u$, inequality \eqref{e7.18} played a similar role to that of \eqref{ne4.5}(see Definition \ref{d4.1}), it said that 0 is a $\mathcal{C}$-subsolution of \eqref{e7.10}. Hence we can use the same method as in Section 5 to get the following estimate:
\begin{equation}\label{e7.19}
\sup_\Omega|\sqrt{-1}\partial\bar{\partial}u|_\alpha\leq C\biggl(\sup_\Omega|\nabla u|_\alpha^2+1\biggl).
\end{equation}
The only difference is that when we estimate the term $F^{kk}g_{k\bar{k}1\bar{1}}$, we need to apply $\nabla_i$ and $\nabla_{\bar{i}}$ to the equation \eqref{e7.10} and set $i=1$, since $h\in \tilde{C}^\infty(\Omega \times\mathbb{R})$ and assuming $\lambda_1\gg \sup_\Omega|\nabla u|_\alpha^2+1$, we have
\begin{equation}\label{e7.20}
\begin{aligned}
F^{pq,rs}\nabla_1 g_{q\bar{p}}\nabla_{\bar{1}}g_{s\bar{r}}+F^{kk}\nabla_{\bar{1}}\nabla_1g_{k\bar{k}}&=h_{1\bar{1}}+h_{1u}u_{\bar{1}}+h_u u_{1\bar{1}}+h_{u\bar{1}}u_1+(h_{uu}+\epsilon)u_1u_{\bar{1}}\\
&\geq -C\mathcal{F}\lambda_1.
\end{aligned}
\end{equation}
By using \eqref{e7.20}, we can also get the inequality (3.18) in \cite{STW17}, so we can prove \eqref{e7.19}. The $C_1$ and high-order estimates can be proved by using the same method as in Section 5.  This completes the proof of the theorem.
\end{proof}

Next, we set up the continuity method to solve the equation \eqref{e7.10} as in Lemma \ref{t7.2}, we assume that
\begin{equation}
F(\alpha^{i\bar{p}}\chi_{j\bar{p}})=0\;\;\;{\rm and}\;\;\;\lambda[\alpha^{i\bar{p}}\chi_{j\bar{p}}]\in \Gamma_n.
\end{equation}

Define
\begin{equation}
\Psi:\tilde{C}^{k+\alpha}(\Omega)\times \mathbb{R}\rightarrow \tilde{C}^{k-2+\alpha}(\Omega)
\end{equation}
by
\begin{equation}
\Psi(u,t)=F(\alpha^{i\bar{p}}(\chi_{j\bar{p}}+u_{j\bar{p}}))-th(x,u)-\epsilon u.
\end{equation}
The fact that $\Psi$ maps $\tilde{C}^{k+\alpha}(\Omega)\times \mathbb{R}$ into $\tilde{C}^{k-2+\alpha}(M)$ can be easily verified using coordinate charts.

We are actually interested in the following open subset $U$ of $\tilde{C}^{k+\alpha}(\Omega)$:
\begin{equation}
\begin{aligned}
U=\{u\in \tilde{C}^{k+\alpha}(\Omega):(1/c)(\alpha_{j\bar{p}})<(\chi_{j\bar{p}}+u_{j\bar{p}})<c (\alpha_{j\bar{p}}), \;
&{\rm for \;some\; positive \; constant} \;c\}.
\end{aligned}
\end{equation}
To solve \eqref{e7.10}, we set $\mathcal{L}$ be the set $\{t\in [0,1]: {\rm \;there \;exists} \;u\in U \;{\rm such \;that} \; \Psi(u,t)=0\}$. It suffices to show that $1\in \mathcal{L}$. Indeed, we shall show that $\mathcal{L}=[0,1]$. First of all, observe that $\Psi(0,0)=0$, we have $0\in \mathcal{L}$. Then we shall show that $\mathcal{L}$ is both open and closed.

\emph{Openness}. To show that $\mathcal{L}$ is an open set, we shall use the usual implicit function theorem for Banach spaces. This amounts to showing that the Fr\'{e}chet derivative of $\Psi$ with respect to $u$, denoted as $\Psi_u$, has a bounded inverse.
The Fr\'{e}chet derivative $\Psi_u$ at the point $(u,t)$ is then an operator from $\tilde{C}^{k+\alpha}(\Omega)$ into $\tilde{C}^{k-2+\alpha}(\Omega)$ defined for any $g\in \tilde{C}^{k+\alpha}(\Omega)$,
\begin{equation}
(\Psi_u(u,t))(g)=F^{ij}(\alpha^{a\bar{p}}(\chi_{b\bar{p}}+u_{b\bar{p}}))\alpha^{i\bar{l}}g_{j\bar{l}}-th_u(x,u)g-\epsilon g.
\end{equation}
Therefore, we have to show that, for any $v\in \tilde{C}^{k-2+\alpha}(\Omega)$,
\begin{equation}\label{e7.26}
F^{ij}(\alpha^{a\bar{p}}(\chi_{b\bar{p}}+u_{b\bar{p}}))\alpha^{i\bar{l}}g_{j\bar{l}}-th_u(x,u)g-\epsilon g=v
\end{equation}
can be solved for $g\in \tilde{C}^{k+\alpha}(\Omega)$ and that $|g|_{k+\alpha}\leq c|v|_{k-2+\alpha}$ for some constant $c$ independent of $v$. By the $C^2$ estimates and condition \textbf{(a3)}, we have
\begin{equation}
\tilde{c}^{-1}\delta_{ij}<(F^{ij}(\alpha^{a\bar{p}}(\chi_{b\bar{p}}+u_{b\bar{p}})))<\tilde{c}\delta_{ij}
\end{equation}
for some constant $\tilde{c}>0$.

We first show that there is at most on function $g$ in $\tilde{C}^{k+\alpha}(\Omega)$ that satisfies equation \eqref{e7.26}. It is enough to check that $(\Psi_u(u,t))(g)=0$ and $g\in \tilde{C}^{k+\alpha}(\Omega)$ imply $g\equiv 0$. Assume $g\in \tilde{C}^{k+\alpha}(\Omega)$, $g$ is in particular bounded. We want to show that $g\leq 0$. The generalized maximum principle implies the existence of a sequence of points $\{x_i\}$ in $\Omega$ such that $\lim g(x_i)=\sup g$ and $\overline{\lim}(g_{p\bar{q}}(x_i))\leq 0$. Applying this to the equation $(\Psi_u(u,t))(g)=0$, we obtain $\sup g\leq 0$. Similarly, $\inf g\geq 0$, which implies that $g\equiv 0$.

Next, we prove the existence of $g$.

Let $\Omega_i$ be an exhaustion of $\Omega$ by compact subdomains. Assume $v\in \tilde{C}^{k-2+\alpha}(\Omega)$, and let $g^i$ be the unique solution to
\begin{equation}
\begin{aligned}
(\Psi_u(u,t))(g^i)=v \;\;\;{\rm on}\;\Omega_i,\\
g^i=0\;\;\;{\rm on}\;\partial\Omega_i.
\end{aligned}
\end{equation}
By applying the maximum principle to $\Omega_i$, we obtain that
\begin{equation}
\sup_{\Omega_i}|g^i|\leq \frac{\sup |v|}{\epsilon}.
\end{equation}
Interior Schauder estimates applied in coordinate charts as in Definition \ref{db2.1} immediately show that a subsequence of $g^i$ will converge to some $g\in \tilde{C}^{k+\alpha}(\Omega)$ which solves \eqref{e7.26} and that the estimate $|g|_{k+\alpha}\leq c|v|_{k-2+\alpha}$ is valid for some constant independent of $v$. This completes the proof of the openness of $\mathcal{L}$.

\emph{Closedness}. By Lemma \ref{t7.2}, we obtain the closedness of $\mathcal{L}$.

Then, by the above argument, we obtain:
\begin{lemma}\label{t7.3}
Assume that $F(\alpha^{i\bar{p}}\chi_{j\bar{p}})=0$, then the equation \eqref{e7.10} as in Lemma \ref{t7.2} exists a solution $u\in \tilde{C}^{k+\beta}(\Omega)$.
\end{lemma}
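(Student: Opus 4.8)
The plan is to run the method of continuity along the family $\Psi(u,t)=0$, $t\in[0,1]$, that has just been set up, and to show that $\mathcal{L}=\{t\in[0,1]:\exists\,u\in U \text{ with }\Psi(u,t)=0\}$ is nonempty, open and closed in $[0,1]$; since $[0,1]$ is connected this forces $\mathcal{L}=[0,1]$, and the assertion is the statement $1\in\mathcal{L}$. Nonemptiness is immediate: the hypothesis $F(\alpha^{i\bar{p}}\chi_{j\bar{p}})=0$ gives $\Psi(0,0)=0$, and $0\in U$ because $\chi$ is uniformly equivalent to $\alpha$, so $0\in\mathcal{L}$.

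For openness I would apply the implicit function theorem in Banach spaces, so everything reduces to inverting the Fr\'echet derivative $\Psi_u(u,t):\tilde{C}^{k+\alpha}(\Omega)\to\tilde{C}^{k-2+\alpha}(\Omega)$, $g\mapsto F^{ij}(\alpha^{a\bar{p}}(\chi_{b\bar{p}}+u_{b\bar{p}}))\alpha^{i\bar{l}}g_{j\bar{l}}-t h_u(x,u)g-\epsilon g$. The $C^2$ estimate of Lemma \ref{t7.2} makes the leading coefficient matrix uniformly elliptic, while $h_u\geq 0$ together with the term $-\epsilon g$ yields a strictly negative zeroth-order coefficient $-t h_u-\epsilon\leq -\epsilon<0$. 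Uniqueness of $g$ then follows from the generalized maximum principle (Corollary \ref{pbb4.3}), and existence by solving the Dirichlet problem on an exhaustion $\Omega_i\subset\subset\Omega$, using the maximum principle to bound $\sup_{\Omega_i}|g^i|\leq\epsilon^{-1}\sup|v|$, and extracting a limit via interior Schauder estimates in the bounded-geometry charts of Definition \ref{db2.1}; this also gives the uniform bound $|g|_{k+\alpha}\leq c|v|_{k-2+\alpha}$. Hence $\Psi_u(u,t)$ is a bounded linear isomorphism and $\mathcal{L}$ is open.

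Closedness is the heart of the matter and is exactly what Lemma \ref{t7.2} provides. Given $t_j\to t_\infty$ with solutions $u_j\in U$, the estimate $|u_j|_{k+\beta}\leq c$ of Lemma \ref{t7.2} is uniform in $t_j$ (indeed in $\epsilon$), so the compact inclusion $\tilde{C}^{k+\beta}\hookrightarrow\tilde{C}^{k+\beta'}$, $\beta'<\beta$, on each chart together with an Arzel\`a--Ascoli/diagonal argument over the locally finite cover extracts a subsequence converging in $\tilde{C}^{k+\beta'}$ to some $u_\infty$ with $\Psi(u_\infty,t_\infty)=0$, and a bootstrap in the equation upgrades $u_\infty$ to $\tilde{C}^{k+\beta}$. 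The delicate point—and the step I expect to be the main obstacle—is that $u_\infty\in U$, i.e. that the two-sided bound $c^{-1}\alpha<\chi+\sqrt{-1}\partial\bar{\partial}u_\infty<c\alpha$ survives the limit on the noncompact manifold: the $C^2$ bound gives a uniform upper bound on the eigenvalues $\lambda[\alpha^{i\bar{p}}(\chi_{j\bar{p}}+(u_j)_{j\bar{p}})]$, while the combination of the equation \eqref{e7.10}, the concavity and monotonicity of $f$, and the fact that $0$ is a $\mathcal{C}$-subsolution (exploited through Proposition 2.3 of \cite{STW17}, exactly as in Section 5) confines these eigenvalues to a fixed compact subset of the interior of $\Gamma$, hence keeps them bounded away from $\partial\Gamma$; these bounds pass to $u_\infty$. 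Thus $t_\infty\in\mathcal{L}$, so $\mathcal{L}$ is closed, $\mathcal{L}=[0,1]$, and in particular \eqref{e7.10} has a solution $u\in\tilde{C}^{k+\beta}(\Omega)$.
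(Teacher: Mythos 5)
Your proposal is correct and follows essentially the same route as the paper: the continuity method with $0\in\mathcal{L}$ from $\Psi(0,0)=0$, openness via the implicit function theorem (inverting $\Psi_u$ using the generalized maximum principle, an exhaustion by Dirichlet problems, and interior Schauder estimates), and closedness from the uniform a priori estimates of Lemma \ref{t7.2}. Your elaboration of the closedness step (Arzel\`a--Ascoli plus the argument that the limit stays in $U$) is a more detailed account of what the paper compresses into a one-line citation of Lemma \ref{t7.2}, but it is the same argument.
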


Finally, we obtain the following existence result (Theorem \ref{t7.4}) for the solution of $F(A)=h$.

\begin{proof}[Proof of Theorem \ref{t7.4}]
Replacing the right-hand side term $h(x,u)$ in equation \eqref{e1.10} by $h(x,u)+\epsilon_m u$, where $\{\epsilon_m\}$ is a sequence such that $\epsilon_m\in (0,1)$ and $\epsilon_m\rightarrow 0$, we obtain a sequence of the following equations,
\begin{equation}\label{e7.32}
F(\alpha^{i\bar{p}}(\chi_{j\bar{p}}+u_{j\bar{p}}))=h(x,u)+\epsilon_m u.
\end{equation}
By Lemma \ref{t7.2} and Lemma \ref{t7.3}, for each $m$, the equation \eqref{e7.32} exists a solution $u_m\in \tilde{C}^{k+\beta}(\Omega)$ and $|u_m|_{k+\beta}<c$ for some constant $c$ independent of $m$. Let $\{\Omega_i\}$ be an exhaustion of $\Omega$ by compact subdomains. By the Arzel\`{a}-Ascoli Theorem, on $\Omega_1$ we can take a subsequence $\{u_{m_k}\}$ of $\{u_m\}$ such that $u_{m_k}\rightarrow u^1$ for some $u^1\in \tilde{C}^{k+\beta}(\Omega_1)$. And on $\Omega_2$, we can also take a subsequence of $\{u_{m_k}\}$ converging to some $u^2\in \tilde{C}^{k+\beta}(\Omega_2)$. Note that $u^2|_{\Omega_1}=u^1$. Continuing in this way, we obtain a function sequence $u^i\in \tilde{C}^{k+\beta}(\Omega_i)$ such that $u^i|_{\Omega_{i-1}}=u^{i-1}$ and $|u_i|_{k+\beta}<c$ for some constant $c$ independent of $i$. Define $u\in \tilde{C}^{k+\beta}(\Omega)$ by $u|_{\Omega_i}=u^i$ for each $i$, we can verify that $u$ satisfies equation \eqref{e1.10}.
\end{proof}

If, in addition, assuming that $h_u(x,u)$ is strictly positive, we can obtain the following:

\begin{proof}[Proof of Theorem \ref{t7.5}]
The proof is similar to that of Lemma \ref{t7.3}. We only need to use Theorem \ref{t6.4} instead of Theorem \ref{t6.1} to obtain the $C^0$ estimates for \eqref{e7.33}, the rest of the proof is the same as that of Lemma \ref{t7.3}.
\end{proof}

\section{Complex Monge-Amp\`{e}re equations}

In this section, we consider the complex Monge-Amp\`{e}re equation and $(n-1)$ Monge-Amp\`{e}re equation on noncompact manifolds (which can be seen as the special cases of \eqref{e1.1}) and some geometric applications.

When
\begin{equation}\label{e8.1}
f(\lambda_1,\ldots,\lambda_n)=\log\lambda_1\ldots\lambda_n,
\end{equation}
where $f$ is defined on $\Gamma_n$, the equation \eqref{e1.1} is the Monge-Amp\`{e}re equation. It is straightforward to check that $f$ satisfies the structure conditions for equation \eqref{e1.1}. \textbf{(a1)}, \textbf{(a2)} and \textbf{(a5)} are obvious. Indeed, $f$ converges to $-\infty$ on the boundary $\partial \Gamma_n$, so \textbf{(a3)} is satisfied, and for \textbf{(a4)} it is enough to note that $f(t\mu)=f(\lambda)+n\log t$, which converges to $\infty$ as $t\rightarrow \infty$.

In addition, if $\lambda\in \Gamma_n$, then we also have
\begin{equation}
\lim_{t\rightarrow\infty}f(\lambda+t\textbf{e}_i)=\infty
\end{equation}
for all $i$. This means that for a function $u$ to be a $\mathcal{C}$-subsolution for this equation, the only requirement is that $\lambda[\alpha^{i\bar{p}}\chi_{j\bar{p}}]\in \Gamma_n$ (see Page 1).

Let \eqref{e1.10} be a Monge-Amp\`{e}re equation, then by Theorem \ref{t7.4}, we obtain the following:
\begin{corollary}\label{c77.1}
Let $\Omega$ be a strictly pseudoconvex domain in a complete K\"{a}hler manifold $(M,\hat{h}_{i\bar{j}})$ as in Proposition \ref{pb2.3} with $l\geq3$. Define $g=-\log(-\varphi)$ and $\alpha_{i\bar{j}}=g_{i\bar{j}}+\hat{h}_{i\bar{j}}$, where $\varphi$ is the defining function of $\Omega$. Given $h\in \tilde{C}^{l-1+\beta}(\Omega)$, where $\beta\in (0,1)$ is some uniform constant. Consider the following equation on $\Omega$:
\begin{equation}\label{e8.2}
\det(\alpha_{i\bar{j}}+u_{i\bar{j}})=e^h\det(\alpha_{i\bar{j}})
\end{equation}
Suppose that $|h|=O(\varphi^2 \underset{\min}{\lambda}(\hat{h}^{a\bar{j}}\varphi_{b\bar{j}}))$, then the equation \eqref{e8.2} exists a solution $u\in \tilde{C}^{l+1+\beta}(\Omega)$ such that $(\alpha_{i\bar{j}}+u_{i\bar{j}})$ is uniformly equivalent to $\alpha$.
\end{corollary}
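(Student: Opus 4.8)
The plan is to obtain Corollary \ref{c77.1} as a direct application of Theorem \ref{t7.4} to the Monge--Amp\`ere operator $F(A)=\log\det A=f(\lambda)$ with $f(\lambda)=\log\lambda_1\cdots\lambda_n$ on $\Gamma=\Gamma_n$, taking the background form to be $\chi:=\alpha$. With this choice $\alpha^{i\bar p}\chi_{j\bar p}=\delta^i_j$, so $\lambda[\alpha^{i\bar p}\chi_{j\bar p}]=(1,\dots,1)\in\Gamma_n$ and $F[\alpha^{i\bar p}\chi_{j\bar p}]=f(1,\dots,1)=0$, and since $\det(\alpha_{i\bar j}+u_{i\bar j})/\det(\alpha_{i\bar j})=\det\!\big(\alpha^{i\bar p}(\alpha_{j\bar p}+u_{j\bar p})\big)$, equation \eqref{e8.2} is exactly $F\big(\alpha^{i\bar p}(\chi_{j\bar p}+u_{j\bar p})\big)=h$ with $h=h(x)$ independent of $u$. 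Setting $k:=l+1$ we have $k\ge 4$; regarding $h\in\tilde C^{l-1+\beta}(\Omega)$ as a function on $\Omega\times\mathbb R$ constant in the last variable places it in $\tilde C^{k-2+\beta}(\Omega\times\mathbb R)$, and the solution $u\in\tilde C^{k+\beta}(\Omega)=\tilde C^{l+1+\beta}(\Omega)$ furnished by Theorem \ref{t7.4} is precisely what is claimed.

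It then remains to verify the hypotheses of Theorem \ref{t7.4}. Since $\Omega$ is strictly pseudoconvex we may take its defining function $\varphi$ to be strictly plurisubharmonic on $\bar\Omega$; being defined on the compact set $\bar\Omega$ it is bounded, it is negative on $\Omega=\{\varphi<0\}$, and $\sqrt{-1}\partial\bar{\partial}\varphi>0$, so $\varphi$ is a bounded global K\"ahler potential on $(\Omega,\alpha)$ in the sense of Definition \ref{d1.1} (Proposition \ref{pb2.3} still applies, strict plurisubharmonicity being stronger than $(\varphi_{i\bar j})\ge 0$). Because $\bar\Omega$ is compact in $M$ while $(\Omega,\alpha)$ is complete, any sequence tending to infinity in $(\Omega,\alpha)$ must approach $\partial\Omega$, where $\varphi=0$; hence $\lim_{x\to\infty}\varphi(x)=0$. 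Proposition \ref{pb2.3} with order $l=k-1$ gives that $(\Omega,\alpha)$ has bounded geometry of order $k-1$, and $\chi=\alpha$ trivially has bounded geometry of order $k-1$ and is uniformly equivalent to $\alpha$. The structure conditions \textbf{(a1)}--\textbf{(a5)} for the Monge--Amp\`ere operator are checked in the opening of Section~7 (here $h$ is bounded since $h\in\tilde C^{l-1+\beta}(\Omega)$, so $\sup_\Omega h<\infty=\sup_{\Gamma_n}f$ and $h_u\equiv 0$); $F[\alpha^{i\bar p}\chi_{j\bar p}]=0$ was noted above; and $0$ is a $\mathcal C$-subsolution because $\lambda[\alpha^{i\bar p}\chi_{j\bar p}]=(1,\dots,1)\in\Gamma_n$ forces $\lim_{t\to\infty}f(\lambda+t\textbf{e}_i)=\infty>h(x)$ for every $i$.

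The one genuinely nontrivial point is the decay hypothesis $|h(x,0)|=O\big(\underset{\min}{\lambda}(\alpha^{a\bar l}\varphi_{b\bar l})\big)$ of Theorem \ref{t7.4}. For this I would invoke the estimate \eqref{e6.26} of Section~4, which with background metric $\hat{h}$ reads
\[
\underset{\min}{\lambda}(\alpha^{a\bar l}\varphi_{b\bar l})\ \ge\ \hat{c}(n)\,\frac{\varphi^2\,\underset{\min}{\lambda}(\hat{h}^{a\bar j}\varphi_{b\bar j})}{-\varphi|\partial\bar{\partial}\varphi|_{\hat{h}}+|\partial\varphi|_{\hat{h}}^2+\varphi^2}.
\]
Since $\bar\Omega$ is compact, $\varphi$, $|\partial\varphi|_{\hat{h}}$ and $|\partial\bar{\partial}\varphi|_{\hat{h}}$ are bounded on $\bar\Omega$, so the denominator is bounded above by a constant; hence $\underset{\min}{\lambda}(\alpha^{a\bar l}\varphi_{b\bar l})\ge c'\,\varphi^2\,\underset{\min}{\lambda}(\hat{h}^{a\bar j}\varphi_{b\bar j})$ for some $c'>0$, and the assumed bound $|h|=O\big(\varphi^2\underset{\min}{\lambda}(\hat{h}^{a\bar j}\varphi_{b\bar j})\big)$ yields $|h|=O\big(\underset{\min}{\lambda}(\alpha^{a\bar l}\varphi_{b\bar l})\big)$. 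With all hypotheses in place, Theorem \ref{t7.4} produces $u\in\tilde C^{l+1+\beta}(\Omega)$ solving $F\big(\alpha^{i\bar p}(\chi_{j\bar p}+u_{j\bar p})\big)=h$, that is \eqref{e8.2}, with $\chi+\sqrt{-1}\partial\bar{\partial}u=\alpha+\sqrt{-1}\partial\bar{\partial}u$ uniformly equivalent to $\alpha$. I expect the main work to be bookkeeping --- matching the regularity orders $l$ versus $k$ and translating the decay condition through \eqref{e6.26} --- rather than any new analytic difficulty, since Theorem \ref{t7.4} already contains the hard $C^0$, $C^1$ and $C^2$ estimates.
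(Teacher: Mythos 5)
Your proposal is correct and takes essentially the same route as the paper, which states Corollary \ref{c77.1} as an immediate consequence of Theorem \ref{t7.4} applied to $F=\log\det$ with $\chi=\alpha$; your verification of the hypotheses (bounded geometry via Proposition \ref{pb2.3}, $F(\mathrm{Id})=0$, the $\mathcal{C}$-subsolution condition, $\lim_{x\to\infty}\varphi=0$, and the translation of the decay hypothesis through \eqref{e6.26}) is exactly the intended bookkeeping. The one point worth flagging is that Proposition \ref{pb2.3} only guarantees $(\varphi_{i\bar j})\ge 0$ with strict positivity near $\partial\Omega$, so your assertion that $\varphi$ may be taken strictly plurisubharmonic on all of $\Omega$ (which is what makes it a bounded global K\"ahler potential in the sense of Definition \ref{d1.1}) is not automatic in a general K\"ahler ambient manifold --- though the paper glosses over the same point, and the hypothesis $|h|=O\big(\varphi^2\underset{\min}{\lambda}(\hat{h}^{a\bar j}\varphi_{b\bar j})\big)$ is only meaningful under your reading.
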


The above corollary can be used to find K\"{a}hler metrics with some prescribed volume forms on strictly pseudoconvex domains, and we can prove Theorem \ref{t1.7}.
\begin{proof}[Proof of Theorem \ref{t1.7}] Since $|\partial\bar{\partial}\varphi|_{h}>c>0$, we obtain that $(\Omega,g_{i\bar{j}})$ has bounded geometry of infinity order. Note that
\begin{equation}
\varphi^{2(1-n)}=O(\varphi^2) |(\sqrt{-1})^n\det(g_{i\bar{j}})dz^1 \wedge d\bar{z}^1\cdots \wedge dz^n\wedge d\bar{z}^n|_{h},
\end{equation}
then by \eqref{e1.12}, we have
\begin{equation}
\frac{|V(x)-\det(g_{i\bar{j}})|}{\det(g_{i\bar{j}})}=O(\varphi^2).
\end{equation}
Finally,
\begin{equation}
V(x)=e^{\tilde{h}} \det(g_{i\bar{j}}),
\end{equation}
where $\tilde{h}\in \tilde{C}^\infty(\Omega)$ and $|\tilde{h}|=O(\varphi^2)$. Consider the following equation
\begin{equation}\label{e777.7}
\det(g_{i\bar{j}}+u_{i\bar{j}})=V(x)=e^{\tilde{h}} \det(g_{i\bar{j}}),
\end{equation}
taking $g_{i\bar{j}}$ instead of $\alpha_{i\bar{j}}$ in \eqref{e8.2} of Corollary \ref{c77.1}, note that $\underset{\min}{\lambda}(h^{a\bar{j}}\varphi_{b\bar{j}})>c'>0$, we obtain that \eqref{e777.7} has a solution $u\in \tilde{C}^{\infty}(\Omega)$ such that $(g_{i\bar{j}}+u_{i\bar{j}})$ is uniformly equivalent to $\alpha$. This completes the proof of the theorem.
\end{proof}

Let \eqref{e7.33} be a Monge-Amp\`{e}re equation (see \eqref{e8.1}), then by Theorem \ref{t7.5}, we obtain the following:
\begin{corollary}\label{c8.2}
Let $(\Omega,\alpha)$ be a complete Hermitian manifold with bounded geometry of order $k-1$, where $k\geq 4$.
Then for any $K\geq0$ and $h\in \tilde{C}^{k-2+\beta}(\Omega)$, $\beta\in (0,1)$. Consider the following equation on $\Omega$:
\begin{equation}\label{e8.3}
\det(\alpha_{i\bar{j}}+u_{i\bar{j}})=e^{Ku}e^h\det(\alpha_{i\bar{j}}),
\end{equation}
Then the equation \eqref{e8.3} exists a solution $u\in \tilde{C}^{k+\beta}(\Omega)$ such that $\chi+\sqrt{-1}\partial\bar{\partial}u$ is uniformly equivalent to $\alpha$. Moreover, if
\begin{equation}\label{e8.4}
{\rm Ric}(\alpha)=-K'\alpha+\partial\bar{\partial}h'
\end{equation}
for some $K'>0$ and $h'\in \tilde{C}^{k-2+\beta}(\Omega)$. Then, there exists a complete Chern-Einstein metric on $\Omega$.
\end{corollary}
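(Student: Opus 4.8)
The plan is to derive the corollary from Theorem \ref{t7.5} applied to the complex Monge-Amp\`{e}re operator $f(\lambda_1,\ldots,\lambda_n)=\log\lambda_1\cdots\lambda_n$ on the cone $\Gamma=\Gamma_n$, with background form $\chi=\alpha$ and right-hand side $h(x,u)=Ku+h(x)$. First note that $F(\alpha^{i\bar{p}}(\alpha_{j\bar{p}}+u_{j\bar{p}}))=\log\det(\alpha_{i\bar{j}}+u_{i\bar{j}})-\log\det\alpha_{i\bar{j}}$, so $F(A)=Ku+h$ is precisely \eqref{e8.3}, and $Ku+h(x)$ lies in $\tilde{C}^{k-2+\beta}(\Omega\times\mathbb{R})$. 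The conditions \textbf{(a1)}--\textbf{(a4)} for $f$ on $\Gamma_n$ were checked at the beginning of this section (in particular $\sup_{\partial\Gamma_n}f=-\infty$ gives \textbf{(a3)}), and \textbf{(a5)} holds because the right-hand side has $u$-derivative $K\geq0$ and $\sup_\Omega h(x,0)\leq\sup_\Omega|h|<\infty=\sup_{\Gamma_n}f$. With $\chi=\alpha$ we get $F(\alpha^{i\bar{p}}\chi_{j\bar{p}})=\log\det(\delta_{ij})=0$, $\chi$ trivially has bounded geometry of order $k-1$ and is uniformly equivalent to $\alpha$, and at each point the eigenvalues of $\alpha^{i\bar{p}}\chi_{j\bar{p}}$ are $(1,\ldots,1)\in\Gamma_n$; since $\lim_{t\to\infty}f((1,\ldots,1)+t\textbf{e}_i)=\lim_{t\to\infty}\log(1+t)=\infty$, the function $0$ is a $\mathcal{C}$-subsolution of \eqref{e8.3}.

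For $K>0$ the hypothesis of Theorem \ref{t7.5} that $h_u\geq a>0$ holds with $a=K$, and that theorem immediately yields a solution $u\in\tilde{C}^{k+\beta}(\Omega)$ with $\alpha+\sqrt{-1}\partial\bar{\partial}u$ uniformly equivalent to $\alpha$. For $K=0$ I would solve \eqref{e8.3} with $K$ replaced by a sequence $K_m\downarrow0$, obtaining $u_m\in\tilde{C}^{k+\beta}(\Omega)$, and pass to a limit along an exhaustion of $\Omega$ in the spirit of the proof of Theorem \ref{t7.4}; this requires the a priori estimates of Section 6 to be uniform in $K_m$, and that is the delicate point, since the $C^0$-bound of Theorem \ref{t6.4} degenerates as $K_m\to0$.

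For the final assertion, apply the first part with $K=K'>0$ and $h=h'$ to get $u\in\tilde{C}^{k+\beta}(\Omega)$ with $\det(\alpha_{i\bar{j}}+u_{i\bar{j}})=e^{K'u+h'}\det\alpha_{i\bar{j}}$, and set $\tilde{\alpha}:=\alpha+\sqrt{-1}\partial\bar{\partial}u$; since $\tilde{\alpha}$ is uniformly equivalent to the complete metric $\alpha$, it is a complete Hermitian metric on $\Omega$. Then $\log\det\tilde{\alpha}_{i\bar{j}}=K'u+h'+\log\det\alpha_{i\bar{j}}$, so applying $-\partial_i\partial_{\bar{j}}$ and using $R_{i\bar{j}}(\alpha)=-\partial_i\partial_{\bar{j}}\log\det\alpha$ together with the hypothesis, which in components reads $R_{i\bar{j}}(\alpha)=-K'\alpha_{i\bar{j}}+h'_{i\bar{j}}$, gives
\[
R_{i\bar{j}}(\tilde{\alpha})=-K'u_{i\bar{j}}-h'_{i\bar{j}}+R_{i\bar{j}}(\alpha)=-K'u_{i\bar{j}}-h'_{i\bar{j}}+(-K'\alpha_{i\bar{j}}+h'_{i\bar{j}})=-K'\tilde{\alpha}_{i\bar{j}},
\]
that is ${\rm Ric}(\tilde{\alpha})=-K'\tilde{\alpha}$, so $\tilde{\alpha}$ is the desired complete Chern-Einstein metric. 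The main obstacle is precisely the $K=0$ case, i.e. securing estimates uniform as $K\to0$; for $K>0$ the corollary amounts to verifying that the complex Monge-Amp\`{e}re operator fits the hypotheses of Theorem \ref{t7.5}, followed by the short Chern-Ricci computation above.
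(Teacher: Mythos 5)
Your proposal follows essentially the same route as the paper: the existence part is obtained by feeding the operator $f(\lambda)=\log\lambda_1\cdots\lambda_n$ on $\Gamma_n$ with $\chi=\alpha$ and right-hand side $Ku+h(x)$ into Theorem \ref{t7.5}, and the ``Moreover'' part is exactly the paper's Chern--Ricci computation $R_{i\bar{j}}(\tilde{\alpha})=-\partial_i\partial_{\bar{j}}[K'u+h'+\log\det\alpha]=-K'\tilde{\alpha}_{i\bar{j}}$. For $K>0$ your verification of \textbf{(a1)}--\textbf{(a5)}, of $F(\alpha^{i\bar{p}}\chi_{j\bar{p}})=0$, and of $0$ being a $\mathcal{C}$-subsolution is complete and matches what the paper implicitly relies on.

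The difficulty you flag at $K=0$ is genuine, and it is worth noting that the paper does not resolve it either: the corollary is stated for all $K\geq 0$, but the paper's justification is a bare citation of Theorem \ref{t7.5}, whose hypothesis $h_u\geq a>0$ fails when $K=0$, and the alternative Theorem \ref{t7.4} is unavailable here because a general complete Hermitian manifold with bounded geometry need not carry a bounded global K\"ahler potential. Your proposed approximation $K_m\downarrow 0$ would indeed founder on the $C^0$-bound $\sup|u_m|\leq \sup|h|/K_m$ from Theorem \ref{t6.4}, which blows up. So for $K=0$ there is a gap — but it is the paper's gap as much as yours, and it is harmless for the only use of the corollary in the paper (the proof of Theorem \ref{t1.8} takes $K=1$). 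Your treatment of the case $K>0$ and of the Chern--Einstein conclusion is correct and complete.
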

\begin{proof}
Let $u\in \tilde{C}^{k+\beta}(\Omega)$ be a solution of the following equation on $(\Omega,\alpha)$:
\begin{equation}\label{e8.5}
\det(\alpha_{i\bar{j}}+u_{i\bar{j}})=e^{K'u}e^{h'}\det(\alpha_{i\bar{j}}).
\end{equation}
By \eqref{e8.4} and \eqref{e8.5}, we have
\begin{equation}
\begin{aligned}
{\rm Ric}(\alpha_{p\bar{q}}+u_{p\bar{q}})_{i\bar{j}}&=-\frac{\partial^2}{\partial z^i\partial\bar{z}^j}[\log \det(\alpha_{p\bar{q}}+u_{p\bar{q}})]\\
&=-\frac{\partial^2}{\partial z^i\partial\bar{z}^j}[K'u+h'+\log \det(\alpha_{p\bar{q}})]\\
&=-K'u_{i\bar{j}}-h'_{i\bar{j}}+{\rm Ric}(\alpha)_{i\bar{j}}\\
&=-K'(\alpha_{i\bar{j}}+u_{i\bar{j}}).
\end{aligned}
\end{equation}
Hence $g_{i\bar{j}}:=\alpha_{i\bar{j}}+u_{i\bar{j}}$ is a complete Chern-Einstein metric (${\rm Ric}(g)=-K'g$).
\end{proof}
Corollary \ref{c8.2} can be regarded as a generalization of Theorem 4.4 in \cite{CY80} to Hermitian manifolds. When $(\Omega,\alpha)$ is a K\"{a}hler manifold, we refer readers to \cite{CY80} to see some types of K\"{a}hler manifolds that satisfy the condition \eqref{e8.4}, and then such manifolds exist complete K\"{a}hler-Einstein metrics with negative
scalar curvature.

Indeed, Corollary \ref{c8.2} can be used to find K\"{a}hler-Einstein metrics on some submanifolds of a complete K\"{a}hler manifold whose first Chern class is negative. The following functions $f$, $\psi$ and $\mathfrak{F}$ can be found in \cite{LT20}:

Let $\kappa\in (0,1)$, $f:[0,1)\rightarrow [0,\infty)$ be the function:
\begin{equation}
\label{e5.2}
f(s)=\left\{ \begin{aligned}
&0,\;\;\;\;\;\;\;\;\;\;\;\;\;\;\;\;\;\;\;\;\;\;\;\;\;\;\;\;\;\;\;\;\;\;\;\;\;\;\;s\in[0,1-\kappa];\\
&-\log\biggl[1-\biggl(\frac{s-1+\kappa}{\kappa}\biggl)^2\biggl],s\in(1-\kappa,1).
\end{aligned} \right.
\end{equation}
Let $\psi\geq 0$ be a smooth function on $\mathbb{R}^+$ such that %$\psi(s)=0$ if $s\leq 1-\kappa+\kappa^2$, $\psi(s)=1$ for $s\geq 1-\kappa+2\kappa^2$
\begin{equation}
\label{e5.3}
\psi(s)=\left\{ \begin{aligned}
&0,\;\;\;s\in[0,1-\kappa+\kappa^2];\\
&1,\;\;\;s\in(1-\kappa+2\kappa^2,1).
\end{aligned} \right.
\end{equation}
and $\frac{2}{\kappa^2}\geq \psi'\geq 0$. Define
\[\mathfrak{F}:=\int_0^s\psi(\tau)f'(\tau)d\tau\]
By \cite{LT20}, we have
\begin{lemma}\label{l77.3}Suppose $0<\kappa<\frac{1}{8}$. Then the function $\mathfrak{F}\geq 0$ defined above is smooth and satisfies the following:\\
\hspace*{0.5cm}\emph{(i)} $\mathfrak{F}=0$ for $s\in[0,1-\kappa+\kappa^2]$.\\
\hspace*{0.4cm}\emph{(ii)} $\mathfrak{F}'\geq 0$ and for any $k\geq 1$, $\exp(-k\mathfrak{F})\mathfrak{F}^{(k)}$ is uniformly bounded.\\
\end{lemma}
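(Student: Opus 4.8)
The statement is entirely about the one-variable function $\mathfrak{F}(s)=\int_0^s\psi(\tau)f'(\tau)\,d\tau$, so the proof is a direct computation with the explicit formulas \eqref{e5.2}, \eqref{e5.3}, carried out piecewise on the intervals $[0,1-\kappa]$, $(1-\kappa,1-\kappa+\kappa^2]$, $(1-\kappa+\kappa^2,1-\kappa+2\kappa^2]$, and $(1-\kappa+2\kappa^2,1)$. First I would record the basic data: on $[0,1-\kappa]$ we have $f\equiv 0$, hence $f'\equiv 0$; on $(1-\kappa,1)$ a short calculation gives $f'(s)=\dfrac{2(s-1+\kappa)/\kappa^2}{1-\big((s-1+\kappa)/\kappa\big)^2}$, which is non-negative there and blows up like $(1-s)^{-1}$ as $s\to 1$. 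Since $\psi\geq 0$, the integrand $\psi f'$ is non-negative everywhere, so $\mathfrak{F}\geq 0$ and $\mathfrak{F}'=\psi f'\geq 0$; this already gives the first half of (ii).

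For (i), note that on $[0,1-\kappa+\kappa^2]$ we have $\psi\equiv 0$ by \eqref{e5.3} \emph{and} $f'\equiv 0$ on the sub-interval $[0,1-\kappa]$, so in either case the integrand vanishes on all of $[0,1-\kappa+\kappa^2]$; therefore $\mathfrak{F}(s)=0$ there. (This is where the hypothesis $0<\kappa<\tfrac18$ is comfortably more than enough to guarantee $1-\kappa+2\kappa^2<1$, so all the breakpoints lie in $[0,1)$ and the definitions are consistent.) The substantive part is the uniform boundedness of $\exp(-k\mathfrak{F})\,\mathfrak{F}^{(k)}$ for each $k\geq 1$. The idea is that $\mathfrak{F}$ is smooth on $[0,1)$, so the only issue is the behaviour as $s\to 1$; and near $s=1$ we have $\psi\equiv 1$, hence $\mathfrak{F}'=f'$ and, up to an additive smooth bounded term coming from the transition region, $\mathfrak{F}(s)=f(s)+O(1)=-\log\!\big[1-((s-1+\kappa)/\kappa)^2\big]+O(1)$. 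Writing $w=1-\big((s-1+\kappa)/\kappa\big)^2\to 0^+$, one gets $e^{-\mathfrak{F}(s)}=O(w)$ and, by differentiating $\mathfrak{F}=-\log w+O(1)$ repeatedly, $\mathfrak{F}^{(k)}(s)$ is a sum of terms of the form $w^{-k}$ times a polynomial in $s$ (bounded near $s=1$), so $\mathfrak{F}^{(k)}=O(w^{-k})$. Hence $e^{-k\mathfrak{F}}\mathfrak{F}^{(k)}=O(w^k)\cdot O(w^{-k})=O(1)$ near $s=1$, and since $\mathfrak{F}^{(k)}$ is continuous on the compact set $[0,1-\delta]$ away from $1$, it is bounded there too.

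To make the decay statement $\mathfrak{F}^{(k)}=O(w^{-k})$ precise I would argue by induction on $k$: $\mathfrak{F}'=f'$ on $(1-\kappa+2\kappa^2,1)$, and from the closed form $f'=\dfrac{2u/\kappa^2}{w}$ with $u=(s-1+\kappa)/\kappa$, $w=1-u^2$, $w'=-2u/\kappa$, one checks that $\tfrac{d}{ds}\big(g/w^{m}\big)=(g'w-mgw')/w^{m+1}$, so differentiating a function of the form (smooth bounded)$/w^{m}$ produces a function of the form (smooth bounded)$/w^{m+1}$; starting from $\mathfrak{F}'=(\text{bounded})/w^1$ this gives $\mathfrak{F}^{(k)}=(\text{bounded})/w^{k}$. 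Combined with $e^{-\mathfrak{F}}\le C w$ (valid near $s=1$ because $\mathfrak{F}=-\log w+O(1)$ there), the claim follows. \textbf{The main obstacle} is purely bookkeeping: one must be careful that on the transition interval $(1-\kappa+\kappa^2,1-\kappa+2\kappa^2]$, where $\psi$ interpolates between $0$ and $1$, the function $\mathfrak{F}$ and all its derivatives stay bounded — this is immediate since that interval is a fixed compact set on which $\psi$ and $f$ are smooth — and that the additive "$O(1)$" relating $\mathfrak{F}$ to $f$ near $s=1$ genuinely is a \emph{smooth} bounded function (it equals $\int_{1-\kappa+\kappa^2}^{s_0}\psi f'$ for any fixed $s_0>1-\kappa+2\kappa^2$, a constant, plus nothing further since $\psi\equiv1$ beyond $s_0$), so that differentiating $\mathfrak{F}$ and differentiating $f$ agree near $s=1$. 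No estimate here is delicate; the content is simply the elementary observation that repeated differentiation of $-\log w$ only worsens the singularity polynomially in $1/w$, which is killed by the exponential weight $e^{-k\mathfrak{F}}$.
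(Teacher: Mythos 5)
Your proof is correct. Note that the paper itself gives no argument for this lemma: it simply cites \cite{LT20} (Lee--Tam), where the functions $f$, $\psi$, $\mathfrak{F}$ are constructed, so there is no in-paper proof to compare against; your direct piecewise verification supplies exactly what is being invoked. The two key points are both present and handled properly: (1) the only place $f$ fails to be smooth is $s=1-\kappa$ (where $f$ is merely $C^1$, since $f=u^2+O(u^4)$ for $u=(s-1+\kappa)/\kappa>0$ but vanishes identically for $u\le 0$), and this is neutralized because $\psi\equiv 0$ on a neighbourhood of that point, so the integrand $\psi f'$ is smooth and $\mathfrak{F}$ is smooth; (2) near $s=1$ one has $\psi\equiv 1$, hence $\mathfrak{F}=f+C=-\log w+C$ exactly (with $w=1-u^2$ and $C$ a genuine constant, as you observe), so $e^{-\mathfrak{F}}=e^{-C}w$ while the induction $\mathfrak{F}^{(k)}=P_k/w^k$ with $P_k$ polynomial gives $e^{-k\mathfrak{F}}\mathfrak{F}^{(k)}=e^{-kC}P_k=O(1)$; boundedness on the complementary compact interval is immediate. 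The restriction $\kappa<\tfrac18$ indeed only serves to keep the breakpoints ordered inside $[0,1)$.
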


Then, we can prove Theorem \ref{t1.8}, we rewrite it here:

\begin{theorem}
Let $(M,\alpha)$ be a complete K\"{a}hler manifold with negative Ricci curvature bounded from below by a negative
constant. Then, for any compact subset $K\subset M$, there exists a K\"{a}hler-Einstein metric on $K$. Moreover, this metric can be extended to some open set $N\subset M$ and forms a complete Hermitian metric on $N$.
\end{theorem}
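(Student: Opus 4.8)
The plan is to find an open set $N\supset K$ which is a strictly pseudoconvex, relatively compact domain in $M$, equip it with the Cheng--Yau type Kähler metric $\alpha_N:=-(\log(-\varphi))_{i\bar j}+\alpha_{i\bar j}$ ($\varphi$ a defining function of $N$), and then obtain the Kähler--Einstein metric on $N$ by applying Corollary \ref{c8.2} with normalization constant $K'=n+1$. The restriction of that metric to a neighbourhood of $K$ will be the Kähler--Einstein metric ``on $K$'', while on all of $N$ it is a complete Kähler (in particular Hermitian) metric.

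\emph{Step 1: the domain.} First I would construct, using the cut-off functions $f,\psi$ and the function $\mathfrak F$ of Lemma \ref{l77.3}, a relatively compact domain $N$ with $K\subset N\subset\subset M$ and a smooth defining function $\varphi\in C^\infty(\bar N)$ meeting the hypotheses of Proposition \ref{pb2.3} (or \ref{pb2.4}): $\varphi<0$ on $N$, $\varphi=0$ and $d\varphi\neq 0$ on $\partial N$, $(\varphi_{i\bar j})\geq 0$ on $\bar N$ with $(\varphi_{i\bar j})>0$ near $\partial N$. The idea is to take a smooth function that is negative on a slightly larger open set and to round off its zero level set by adding a correction of the form $\mathfrak F(\cdot)$ concentrated near the boundary: because $\mathfrak F$ vanishes on an initial interval and has all derivatives controlled by $\exp(k\mathfrak F)$ (Lemma \ref{l77.3}), the correction leaves the region containing $K$ untouched while making $\partial N$ strictly pseudoconvex. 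Proposition \ref{pb2.3} then shows that $(N,\alpha_N)$ is a complete Kähler manifold with bounded geometry of infinite order.

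\emph{Step 2: verifying the hypotheses of Corollary \ref{c8.2}.} Since $\alpha_N=\alpha+\sqrt{-1}\partial\bar{\partial}(-\log(-\varphi))$ and ${\rm Ric}(\alpha_N)={\rm Ric}(\alpha)-\sqrt{-1}\partial\bar{\partial}\log(\det\alpha_N/\det\alpha)$, one has on $N$
\[
{\rm Ric}(\alpha_N)+(n+1)\alpha_N=\bigl({\rm Ric}(\alpha)+(n+1)\alpha\bigr)+\sqrt{-1}\partial\bar{\partial}\rho,\qquad \rho:=(n+1)\bigl(-\log(-\varphi)\bigr)-\log\frac{\det\alpha_N}{\det\alpha}.
\]
The key point is that, by the Cheng--Yau boundary asymptotics \cite{CY80} ($\det\alpha_N$ behaves like $(-\varphi)^{-(n+1)}$ times a bounded non-vanishing factor near $\partial N$), the singular terms in $\rho$ cancel, so $\rho$ is bounded and, with control of its derivatives in the charts of Definition \ref{db2.1}, $\rho\in\tilde C^\infty(N)$. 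It then remains to solve ${\rm Ric}(\alpha)+(n+1)\alpha=\sqrt{-1}\partial\bar{\partial}h_0$ on $N$: this is a $d$-closed real $(1,1)$-form that is smooth up to the compact boundary, hence globally $\sqrt{-1}\partial\bar{\partial}$-exact once $N$ is chosen with $H^{1,1}_{\bar{\partial}}(N)=0$ (for instance $N$ Stein), and elliptic regularity gives $h_0$ smooth up to $\bar N$, so $h_0\in\tilde C^\infty(N)$. Setting $h':=h_0+\rho$ gives ${\rm Ric}(\alpha_N)=-(n+1)\alpha_N+\sqrt{-1}\partial\bar{\partial}h'$, which is exactly condition \eqref{e8.4} for $\alpha_N$; conditions \textbf{(a1)}--\textbf{(a5)} and the fact that $0$ is a $\mathcal C$-subsolution hold for $f=\log\lambda_1\cdots\lambda_n$ on $\Gamma_n$ as recorded at the start of Section 7.

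\emph{Step 3 and the main difficulty.} Corollary \ref{c8.2} (applied with $K'=n+1$, $h=h'$) now produces $u\in\tilde C^\infty(N)$ with $g:=\alpha_N+\sqrt{-1}\partial\bar{\partial}u$ complete and ${\rm Ric}(g)=-(n+1)g$; as $\alpha_N$ is Kähler, $g$ is a complete Kähler--Einstein metric on $N\supset K$, and its restriction to a neighbourhood of $K$ is the asserted metric. The part I expect to be hardest is Step 1 together with the boundary analysis in Step 2: constructing, for an arbitrary compact $K$ in a general complete Kähler $M$, a relatively compact strictly pseudoconvex $N\supset K$ carrying a plurisubharmonic defining function and having $H^{1,1}_{\bar{\partial}}(N)=0$, and then checking that $\rho$ --- and afterwards the solution $u$ --- really lie in $\tilde C^\infty(N)$; this is precisely where the interplay of the functions $\mathfrak F$ of Lemma \ref{l77.3} with the Cheng--Yau estimates near $\partial N$ has to be used carefully.
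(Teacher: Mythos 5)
Your proposal diverges from the paper's proof in a way that leaves two genuine gaps, both concentrated in the steps you yourself flag as ``hardest'' but do not resolve.

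First, Step 1 is not available under the hypotheses of the theorem. For an arbitrary compact $K$ in an arbitrary complete K\"ahler manifold $M$ with ${\rm Ric}(\alpha)$ negative and bounded below, there is no reason that $K$ is contained in a relatively compact domain $N$ admitting a \emph{plurisubharmonic} defining function with $(\varphi_{i\bar j})>0$ near $\partial N$. The function $\mathfrak F$ of Lemma \ref{l77.3} composed with a distance-type function is a cut-off device for conformal factors; it does not produce plurisubharmonicity, and the existence of strictly pseudoconvex neighbourhoods of every compact set is a strong restriction on $M$ (essentially a holomorphic convexity property) that the curvature hypothesis does not give. Second, even granting such an $N$, Step 2 requires solving ${\rm Ric}(\alpha)+(n+1)\alpha=\sqrt{-1}\partial\bar\partial h_0$ on $N$ with $h_0\in\tilde C^\infty(N)$. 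This needs the de Rham class of that closed $(1,1)$-form to vanish on $N$; Steinness alone does not force $H^2(N,\mathbb R)=0$, and nothing in the hypotheses controls this class or the growth of a potential near $\partial N$ in the $\tilde C^\infty$ norms of Definition \ref{db2.1}. So condition \eqref{e8.4} for $\alpha_N$ cannot be verified in general along your route.

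The paper's proof avoids both problems by exploiting the negativity of the Ricci curvature in a different way: it takes $N=U_{\rho_0}$ to be a sublevel set of a smoothing $\rho$ of the distance function (Shi's lemma), and uses as background metric $h_0=-e^{2F}{\rm Ric}(\alpha)$ with $F=\mathfrak F(\rho/\rho_0)$ --- a conformal stretching of the K\"ahler metric $-{\rm Ric}(\alpha)$ that is complete and of bounded geometry on $U_{\rho_0}$ and equals $-{\rm Ric}(\alpha)$ on the region containing $K$. With this choice, condition \eqref{e8.4} holds \emph{automatically} with $K'=1$ and $h'=\log\bigl(\det\alpha/\det(-{\rm Ric}(\alpha))\bigr)$, because ${\rm Ric}(-{\rm Ric}(\alpha))={\rm Ric}(\alpha)+\partial\bar\partial h'$; no $\partial\bar\partial$-lemma or pseudoconvexity is needed. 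The price is that $h_0$ is only Hermitian where $F$ is nonconstant, which is exactly why the theorem asserts a K\"ahler--Einstein metric only on $K$ and a complete Hermitian (Chern--Einstein) metric on $N$, whereas your scheme, if it worked, would give a complete K\"ahler--Einstein metric on all of $N$ --- a conclusion stronger than what the hypotheses support.
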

\begin{proof}
Since that ${\rm Ric}(\alpha)$ is bounded from below by a negative constant, by Shi \cite{shi97} (see also Lemma 3.2 of \cite{JY25}),
there exists a smooth real function $\rho$ which is uniformly equivalent to the distance function from a fixed point. For any $\rho_0>0$, let $U_{\rho_0}$ be the component of $\{x|\rho(x)<\rho_0\}$ that contains the fixed point as above. Hence $U_{\rho_0}$ will exhaust $M$ as $\rho_0\rightarrow \infty$. For $\rho_0>>1$, we define $F(x)=\mathfrak{F}(\rho(x)/\rho_0)$. Let $h_0=-e^{2F}{\rm Ric}(\alpha)$. We find $(U_{\rho_0},h_0)$ is a complete Hermitian metric with bounded geometry of infinite order (see \cite{LT20}) and $h_0=-{\rm Ric}(\alpha)$ on $\{\rho(x)<(1-\kappa+\kappa^2)\rho_0\}$. If we choose $\rho_0$ large enough, we can assume the $K\subset \{\rho(x)<(1-\kappa+\kappa^2)\rho_0\}$. Consider the following equation on $U_{\rho_0}$:
\begin{equation}\label{e77.9}
\det((h_0)_{i\bar{j}}+u_{i\bar{j}})=e^{u}e^h\det((h_0)_{i\bar{j}}),
\end{equation}
where
\begin{equation}\label{e77.10}
h:=\log \frac{\det \alpha}{\det (-{\rm Ric}(\alpha))}.
\end{equation}

Since $\overline{U_{\rho_0}}$ is compact, the K\"{a}hler metric $-{\rm Ric}(\alpha)$ has bounded geometry of infinity order on $U_{\rho_0}$, and $h\in\tilde{C}^\infty(U_{\rho_0})$ with respect to $-{\rm Ric}(\alpha)$. By Lemma \ref{l77.3}, we have $h_0\geq -{\rm Ric}(\alpha)$, hence we also have $h\in\tilde{C}^\infty(U_{\rho_0})$ with respect to $h_0$. Then by Corollary \ref{c8.2}, the equation \eqref{e77.9} exists a solution $u\in \tilde{C}^\infty(U_{\rho_0})$ such that $((h_0)_{i\bar{j}}+u_{i\bar{j}})$ is uniformly equivalent to $((h_0)_{i\bar{j}})$. By \eqref{e77.9} and \eqref{e77.10}, we obtain
\begin{equation}
\begin{aligned}
{\rm Ric}((h_0)_{p\bar{q}}+u_{p\bar{q}})_{i\bar{j}}&=-\frac{\partial^2}{\partial z^i\partial\bar{z}^j}[\log \det((h_0)_{p\bar{q}}+u_{p\bar{q}})]\\
&=-\frac{\partial^2}{\partial z^i\partial\bar{z}^j}[u+\log \frac{\det \alpha}{\det (-{\rm Ric}(\alpha))}+\log \det((h_0)_{p\bar{q}})]\\
&=-((h_0)_{i\bar{j}}+u_{i\bar{j}}),
\end{aligned}
\end{equation}
on $\{\rho(x)<(1-\kappa+\kappa^2)\rho_0\}$. Finally, $((h_0)_{i\bar{j}}+u_{i\bar{j}})$ is a K\"{a}hler-Einstein metric on $K$ and it is a complete Hermitian metric on $N:=U_{\rho_0}$.
\end{proof}

Next, we consider the form-type $(n-1)$-Monge-Amp\`{e}re equation on $M$, which can be seen as another example of \eqref{e1.1}. Some basic properties about $(n-1,n-1)$ forms can be found in Appendix A.

\begin{equation}\label{eb2.10}
\left\{ \begin{aligned}
\det\Big[&\omega_0^{n-1}+\sqrt{-1}\partial\overline{\partial}v\wedge \omega^{n-2}\Big]=e^h\det\omega_0^{n-1},\\
&\omega_0^{n-1}+\sqrt{-1}\partial\overline{\partial}v\wedge \omega^{n-2}>0\;\;\;{\rm on}\; M,
\end{aligned} \right.
\end{equation}
where $\omega$ is a K\"{a}hler form as in \eqref{eb2.6} and $\omega_0$ is the form of a balanced metric on $M$.

Then by \eqref{eb2.7}-\eqref{eb2.12}, we have
\begin{equation}\label{eb2.13}
\begin{aligned}
&\frac{\det(*(\omega_0^{n-1}+\sqrt{-1}\partial\overline{\partial}v\wedge \omega^{n-2}))}{\det((n-1)!\omega)}\\
=&\frac{\det(*(\omega_0^{n-1}+\sqrt{-1}\partial\overline{\partial}v\wedge \omega^{n-2}))}{\det(*(\omega^{n-1}))}\\
=&\frac{\det(\omega_0^{n-1}+\sqrt{-1}\partial\overline{\partial}v\wedge \omega^{n-2})}{\det(\omega^{n-1})}\\
=&e^F\frac{\det \omega_0^{n-1}}{\det \omega^{n-1}},
\end{aligned}
\end{equation}
where $*$ is the Hodge star operator with respect to $\omega$, and
\begin{equation}\label{eb2.14}
\begin{aligned}
*(\omega_0^{n-1}+\sqrt{-1}\partial\overline{\partial}v\wedge \omega^{n-2})&=*\omega_0^{n-1}+*(\sqrt{-1}\partial\overline{\partial}v\wedge \omega^{n-2})\\
&=(n-1)!\biggl(\omega_h+\frac{1}{n-1}((\Delta v)\omega-\sqrt{-1}\partial\bar{\partial}v)\biggl),
\end{aligned}
\end{equation}
where $\Delta$ is the Laplacian operator associated to the metric $\omega$. $\omega_h$ is the Hermitian metric defined by
\begin{equation}
\omega_h=\frac{1}{(n-1)!}*\omega_0^{n-1}.
\end{equation}
By \eqref{eb2.13} and \eqref{eb2.14}, we see that \eqref{eb2.10} is equivalent to
\begin{equation}\label{eb2.16}
\log(\mu_1\cdots\mu_n)=h+\log \frac{\det \omega_0^{n-1}}{\det \omega^{n-1}},
\end{equation}
where $\mu_i$ are the eigenvalues of $\omega^{i\bar{p}}\tilde{g}_{j\bar{p}}$, for $\tilde{g}$ given by
\begin{equation}\label{8.23}
\tilde{g}_{i\bar{j}}=(\omega_h)_{i\bar{j}}+\frac{1}{n-1}((\Delta v)\omega_{i\bar{j}}-v_{i\bar{j}}).
\end{equation}

Now we will show that \eqref{8.23} is a special case of \eqref{e1.1} that satisfies conditions \textbf{(a1)}-\textbf{(a5)}.

We assume that our operator $F$ (see \eqref{e1.1}) has the special form $F(M)=\widetilde{F}(P(M))$, where
\begin{equation}
P(M)=(n-1)^{-1}({\rm Tr}(M)I-M),
\end{equation}
(here writing ${\rm Tr}(M)$ for the trace of matrix $M$), and
\begin{equation}
\widetilde{F}(B)=\log(\mu_1,\ldots,\mu_n),
\end{equation}
where $\mu_1,\ldots,\mu_n$ are the eigenvalues of $B$. In terms of eigenvalues, this means that
\begin{equation}
f(\lambda_1,\ldots,\lambda_n)=(\log\circ P)(\lambda_1,\ldots,\lambda_n),
\end{equation}
where we are writing $P$ for the map $\mathbb{R}^n\rightarrow \mathbb{R}^n$ induced on diagonal matrices by the matrix map $P$ above. Explicitly, writing $\mu=P(\lambda)$ for the corresponding $n$-tuples $\lambda,\mu\in \mathbb{R}^n$, we have
\begin{equation}
f(\lambda_1,\ldots,\lambda_n)=\log(\mu_1,\ldots,\mu_n),\;\;\;\;{\rm for}\;\mu_k=\frac{1}{n-1}\sum_{i\neq k}\lambda_i.
\end{equation}

Define the cone $\Gamma_n\subset \mathbb{R}^n$ by $\Gamma_n=P^{-1}(\widetilde{\Gamma_n})$. Observe that $P$ maps $\Gamma_n$ into $\Gamma_n$. It is then easy to see that the function $f:\Gamma\rightarrow \mathbb{R}$ satisfies exactly the same conditions as the function $\log(\mu_1,\ldots,\mu_n)$ (condition \textbf{(a1)}-\textbf{(a5)}).

By the above argument, the equation \eqref{eb2.16} can be rewritten in the following form:
\begin{equation}
F(A)=\log\det P(A)=h(x,u).
\end{equation}
Hence we can use Theorem \ref{t7.4} and Theorem \ref{t7.5} to solve \eqref{eb2.16}.

\section{Applications in affine geometry}

In this section, we solve a class of fully non-linear equations and construct the Hesse-Einstein metrics on certain noncompact affine manifolds. The key technology of the section is to lift the equations on an affine manifold to it's tangent bundle (which can be viewed as a Hermitian manifold), and use the a priori estimates for \eqref{e1.1} to derive the a priori estimates for some equations on affine manifolds.

Some basic results and preliminaries about affine manifolds and Hessian manifolds can be found in Appendix B. In this section, $\nabla$ denotes the flat connection. Now we introduce the relationship between affine structures and complex structures.
Let $(M,\nabla)$ be a flat manifold and $TM$ be the tangent bundle of $M$ with projection $\pi:TM\rightarrow M$. For an affine coordinate system $\{x^1,\ldots,x^n\}$ on $M$, we define
\begin{equation}\label{e99.2}
z^j=\xi^j+\sqrt{-1}\xi^{n+j}
\end{equation}
where $\xi^i=x^i\circ \pi$ and $\xi^{n+i}=dx^i$. Then $n$-tuples of functions given by $\{z^1,\ldots,z^n\}$ yield holomorphic coordinate systems on $TM$. For any smooth function $u$ on $M$, for any point $p\in TM$, we have
\begin{equation}
\frac{\partial (u\circ\pi)}{\partial z_i}(p)=\frac{\partial (u\circ\pi)}{\partial \bar{z_i}}(p)=\frac{1}{2}\frac{\partial u}{\partial x_i}(\pi(p)).
\end{equation}

We denote by $J_\nabla$ the complex structure tensor of the complex manifold $TM$. For a Riemannian metric $g$ on $M$ we put
\begin{equation}
g^T=\sum_{i,j=1}^{n}(g_{ij}\circ \pi)dz^id\overline{z}^j.
\end{equation}
Then $g^T$ is a Hermitian metric on the complex manifold $(TM,J_\nabla)$, and for any $(0,2)$-tensor $\chi$, $\chi^T$ can be defined similarly. Let $\beta (g)$ is a form associated to the Riemannian metric $g$, which in local affine coordinates is given by
\[
{{\beta }_{ij}}(g)=-{{\partial }_{i}}{{\partial }_{j}}\log \det [g(t)]=-2\kappa_{ij}.
\]
If $\beta(g)=\lambda g$ for some constant $\lambda$, then $g$ is called the Hesse-Einstein metric.

Let $(M, \nabla, g)$ be a complete affine manifold of dimension $n$. Fix a $(0,2)$-tensor $\chi$ on $(M,g)$, for any $C^2$ function $u:M\rightarrow \mathbb{R}$ we have a new $(0,2)$-tensor $\hat{g}=\chi+\nabla du$, and we can define $A_j^i=g^{ip}\hat{g}_{jp}$. We consider the equation for $u$ as follows:
\begin{equation}\label{e99.3}
F(A)=h(x,u),
\end{equation}
for a given function $h$ on $M$, where
\begin{equation}
F(A)=f(\lambda_1,\ldots,\lambda_n)
\end{equation}
is a smooth symmetric function of the eigenvalues of $A$ and satisfies the structure conditions \textbf{(a1)}-\textbf{(a5)}. Then if $u$ is a solution of
\eqref{e99.3}, by \eqref{e99.2}, we obtain that $u\circ\pi$ satisfies the following equation:
\begin{equation}\label{e99.5}
F((g^T)^{i\bar{p}}(\chi^T_{j\bar{p}}+4(u\circ\pi)_{j\bar{p}}))=h(x,u\circ\pi)
\end{equation}
on $(TM,g^T)$. This means that we can use the a priori estimates for \eqref{e99.5} to derive the a priori estimates for \eqref{e99.3}.

Note that on affine/Hessian manifolds, we can similarly define bounded geometry, $|\cdot|_{k+\alpha}$, $\tilde{C}^{k+\alpha}(M)$, etc. analogous to Definition \ref{db2.1}. And a Hessian manifold $M$ is said to have a bounded global Hessian potential if and only if there exists a bounded $C^\infty$ function $\varphi:M\rightarrow \mathbb{R}$ such that $\nabla d\varphi>0$ (similar to Definition \ref{d1.1}). Consequently, the existence results for solutions of \eqref{e99.3} analogous to Theorems \ref{t7.4} and \ref{t7.5} can be obtained. We state the following corollary, which is not presented in its most general form.

\begin{corollary}\label{c9.1}
Let $(\Omega,g)$ be a complete affine  Riemannian manifold with bounded geometry of order $k-1$, where $k\geq 4$.
Then for any $K\geq0$ and $h\in \tilde{C}^{k-2+\beta}(\Omega)$, $\beta\in (0,1)$. Consider the following equation on $\Omega$:
\begin{equation}\label{e9.1}
\det(g_{ij}+u_{ij})=e^{Ku}e^h\det(g_{ij}),
\end{equation}
Then the equation \eqref{e9.1} exists a solution $u\in \tilde{C}^{k+\beta}(\Omega)$ such that $(g_{ij}+u_{ij})$ is uniformly equivalent to $(g_{ij})$.
\end{corollary}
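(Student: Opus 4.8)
The plan is to obtain Corollary \ref{c9.1} as the $\log\det$ instance of an affine counterpart of Theorem \ref{t7.5} (when $K>0$) and of Theorem \ref{t7.4} (when $K=0$), proved by lifting to the tangent bundle exactly as advertised at the start of the section. First I would record that, with $z^j=\xi^j+\sqrt{-1}\xi^{n+j}$ as in \eqref{e99.2}, the Hermitian manifold $(T\Omega,g^T)$ is complete and has bounded geometry of order $k-1$: in the charts $z^j$ the components $g_{ij}\circ\pi$ are independent of the fibre coordinates, so they and their derivatives up to order $k-1$ are uniformly bounded and two-sidedly pinched because $(\Omega,g)$ has bounded geometry of order $k-1$; and $g^T$ dominates a fixed flat metric in the fibre directions while restricting to the complete metric $g$ along the zero section, hence is complete. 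Next I would check the structure conditions: $f=\log(\lambda_1\cdots\lambda_n)$ on $\Gamma_n$ satisfies \textbf{(a1)}--\textbf{(a5)} by the same elementary verification as in Section 7, now with $h(x,u)=Ku+h(x)$, so that $h_u=K\ge 0$ and $\sup_\Omega h(x,0)<\sup_{\Gamma_n}f=\infty$; moreover $\chi=g$ is trivially uniformly equivalent to $g$, $F$ applied to the identity endomorphism $g^{ip}g_{jp}=\delta^i_j$ equals $\log\det I=0$, and $0$ is a $\mathcal{C}$-subsolution because $\lambda[g^{ip}g_{jp}]=(1,\dots,1)\in\Gamma_n$, which is all a $\mathcal{C}$-subsolution for $\log\det$ requires.

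With this in place I would run the continuity method directly on $\Omega$, as in Sections 6--7: for $s\in[0,1]$ (and, when $K=0$, an auxiliary parameter $\epsilon\downarrow 0$) solve $\det(g_{ij}+u_{ij})=e^{Ku+sh}\det(g_{ij})$ inside the open set of $u\in\tilde{C}^{k+\beta}(\Omega)$ keeping $g+\nabla du$ uniformly equivalent to $g$; the case $s=0$ is solved by $u=0$. Openness follows from the implicit function theorem on Banach spaces: the Fr\'echet derivative in $u$ is a uniformly elliptic operator with non-positive zeroth-order coefficient $-K$ (resp. $-(K+\epsilon)$), and it is invertible on $\tilde{C}^{k+\beta}(\Omega)$ by the uniqueness coming from the generalized maximum principle on $(\Omega,g)$ --- available because $(\Omega,g)$ has bounded geometry --- together with existence obtained by solving the Dirichlet problem on a compact exhaustion and passing to the limit with interior Schauder estimates, exactly as in the openness argument of Section 7. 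Closedness reduces to a priori estimates uniform in $s$ (and $\epsilon$): the $C^0$ bound is supplied by Theorem \ref{t6.4} when $K>0$, and by the Theorem \ref{t7.4} machinery when $K=0$; the $C^2$ and higher-order bounds are obtained by lifting, since if $u$ solves the equation on $\Omega$ then $u\circ\pi$ solves the complex Monge--Amp\`ere equation $\det\bigl((g^T)_{i\bar{j}}+4(u\circ\pi)_{i\bar{j}}\bigr)=e^{K(u\circ\pi)+sh\circ\pi}\det\bigl((g^T)_{i\bar{j}}\bigr)$ on the complete bounded-geometry Hermitian manifold $(T\Omega,g^T)$ by \eqref{e99.5}, so the estimates of Section 5 yield $\sup_{T\Omega}|\sqrt{-1}\partial\bar{\partial}(u\circ\pi)|_{g^T}\le C$, which controls $\sup_\Omega|\nabla du|_g$ and, after a standard bootstrap, the full norm $|u|_{k+\beta}$, with a constant independent of $s$ and $\epsilon$. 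This closes the continuity method and produces a solution at $s=1$, for which $g+\nabla du$ is uniformly equivalent to $g$ by construction.

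I expect the delicate points to be the following. For $K=0$ the $C^0$ estimate is the genuine difficulty: the Theorem \ref{t7.4}-type argument needs a bounded global Hessian potential on $\Omega$, the affine analogue of Definition \ref{d1.1}, which is not among the stated hypotheses, so the cleanest reading of Corollary \ref{c9.1} is for $K>0$ (or under such an added assumption), and indeed $K>0$ is what the Hesse--Einstein application in Theorem \ref{t1.9} uses. One must also verify carefully that the lifting transfers everything needed: that $g^T$, $\chi^T=g^T$ and $h\circ\pi$ inherit bounded geometry of the correct order (routine, since all fibrewise derivatives vanish), that $|\sqrt{-1}\partial\bar{\partial}(u\circ\pi)|_{g^T}$ and $|\nabla du|_g$ control each other up to the harmless factor $4$ in \eqref{e99.5}, and that the constant from Section 5 depends only on the bounded geometry of $(T\Omega,g^T)$, the $C^0$ bound and the structure of $F$, hence is $s$- and $\epsilon$-independent. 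Finally, since Proposition \ref{pb4.2} is stated for K\"ahler manifolds, one should note that its Cheng--Yau proof goes through verbatim on any complete Riemannian manifold with bounded geometry, which is what legitimises its use on $(\Omega,g)$.
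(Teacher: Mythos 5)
Your proposal follows essentially the same route as the paper: lift the equation to $(T\Omega,g^T)$ via \eqref{e99.5}, verify the structure conditions for $f=\log\det$ with $h(x,u)=Ku+h(x)$, obtain the a priori estimates up to $C^{k+\beta}$ from Theorem \ref{t7.5} (equivalently Corollary \ref{c8.2}) applied to the lifted complex Monge-Amp\`{e}re equation, and close a continuity method as in Section 6. Your caveat about $K=0$ is well taken --- the paper's own proof invokes Theorem \ref{t7.5}, which requires $h_u\geq a>0$, so the written argument really covers only $K>0$ (or $K=0$ under an additional bounded-global-potential hypothesis, as you observe).
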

\begin{proof}
First, we consider the a priori estimates for \eqref{e8.1}, if $u\in \tilde{C}^{k+\beta}(\Omega)$ satisfies \eqref{e9.1}, then we have
\begin{equation}\label{e9.2}
\det(g_{ij}^T+(4u\circ\pi)_{i\bar{j}})=e^{Ku\circ\pi}e^h\det(g_{ij}^T)
\end{equation}
on $T\Omega$. We can easily verify that \eqref{e9.2} satisfies the conditions in Theorem \ref{t7.5} (see also Corollary \ref{c8.2}), and thus obtain the a priori estimates for $u\circ\pi$ up to $C^{k+\beta}$. Consequently, we also obtain the a priori estimates for $u$ up to $C^{k+\beta}$. We may then set up the continuity method to solve equation \eqref{e9.1}; the procedure is similar to that in Section 6, so we omit the proof here.
\end{proof}

Then, we want to construct the Hesse-Einstein metrics on complete Hessian manifolds whose the first affine Chern class is negative. We rewrite Theorem \ref{t1.9} here:

\begin{theorem}
Let $(\Omega,g)$ be a complete Hessian manifold with bounded geometry of infinity order. Suppose that the second Koszul form of $(\Omega,g)$ is bounded from below by a positive constant, then $M$ admits a complete Hesse-Einstein metric.
\end{theorem}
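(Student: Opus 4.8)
The plan is to follow the lifting strategy already advertised at the beginning of Section 8: reduce the construction of a Hesse-Einstein metric on the Hessian manifold $(\Omega,g)$ to solving a real Monge-Amp\`ere equation on $\Omega$, which in turn lifts to a complex Monge-Amp\`ere equation on the tangent bundle $T\Omega$ equipped with the Hermitian metric $g^T$, and then invoke the solvability results of Section 7 (in the guise of Corollary \ref{c9.1}). First I would translate the Hesse-Einstein condition $\beta(g') = \lambda g'$ for $\lambda<0$ into a scalar PDE: writing a candidate metric as $g'_{ij} = g_{ij} + u_{ij}$ (where $u_{ij}$ denotes $\nabla_i\nabla_j u$ in affine coordinates), the identity $\beta_{ij}(g') = -\partial_i\partial_j\log\det(g'_{kl})$ shows that $\beta(g') = -g'$ is equivalent, up to adding a pluriharmonic (here, affine-linear-Hessian) term, to
\begin{equation}
\det(g_{ij}+u_{ij}) = e^{u}\,e^{h}\det(g_{ij}),
\end{equation}
where $e^{h} = \det(g_{ij})^{-1}\det\big(-\tfrac12\beta_{ij}(g)\big)$, i.e. $h = \log\big(\det(\kappa_{ij}(g))/\det(g_{ij})\big)$ up to a constant, $\kappa$ being the second Koszul form. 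The hypothesis that $\kappa(g) > cg$ for some $c>0$ guarantees that $-\beta(g)$ is a genuine Riemannian metric, so $h$ is a well-defined smooth function; and the hypothesis of bounded geometry of infinite order ensures $h \in \tilde C^{k-2+\beta}(\Omega)$ for every $k$ (one checks $\kappa(g)$ is uniformly equivalent to $g$ and has bounded derivatives in the distinguished charts).

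Next I would apply Corollary \ref{c9.1} with $K = 1$ and this $h$: since $(\Omega,g)$ is a complete affine Riemannian manifold with bounded geometry of infinite order and $h \in \tilde C^{k-2+\beta}(\Omega)$ for all $k$, the corollary produces $u \in \tilde C^\infty(\Omega)$ solving the above equation with $(g_{ij}+u_{ij})$ uniformly equivalent to $(g_{ij})$. I would then verify, exactly as in the proof of Corollary \ref{c8.2} and the proof of Theorem \ref{t1.8}, that $g' := g + \nabla du$ satisfies
\begin{equation}
\beta_{ij}(g') = -\partial_i\partial_j\log\det(g'_{kl}) = -\partial_i\partial_j\big[u + h + \log\det(g_{kl})\big] = -u_{ij} - h_{ij} + \beta_{ij}(g) = -(g_{ij}+u_{ij}),
\end{equation}
using $h_{ij} = \partial_i\partial_j h = -\beta_{ij}(g) - g_{ij}$ from the definition of $h$. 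Hence $\beta(g') = -g'$, which is the Hesse-Einstein condition with $\lambda = -1$. Since $g'$ is uniformly equivalent to the complete metric $g$, it is itself complete, so $\Omega$ admits a complete Hesse-Einstein metric.

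The main obstacle I expect is not any one analytic estimate — those are all imported from Corollary \ref{c9.1}, which in turn rests on the $C^0$, $C^2$ and higher-order estimates of Sections 4--5 applied to $g^T$ on $T\Omega$ — but rather the bookkeeping needed to check that the lift respects bounded geometry: one must confirm that if $(\Omega,g)$ has bounded geometry of infinite order as an affine Riemannian manifold, then $(T\Omega, g^T)$ has bounded geometry of infinite order as a Hermitian manifold (with the holomorphic charts $z^j = \xi^j + \sqrt{-1}\,\xi^{n+j}$ built from affine charts), and that the right-hand side function, after lifting via $\pi$, lies in the appropriate $\tilde C^{k-2+\beta}(T\Omega)$ space. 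This is the content behind the phrase "the procedure is similar to that in Section 6, so we omit the proof here" in Corollary \ref{c9.1}; once that translation is granted, the application of Corollary \ref{c9.1} and the curvature identity above are routine. A secondary point to be careful about is the precise relation between $\beta(g)$, the second Koszul form $\kappa$, and the first affine Chern class, so that "$\kappa(g) > cg$" genuinely encodes negativity of $c_1$ and makes $h$ globally defined with no monodromy obstruction; on a single Hessian manifold with a global affine structure this is automatic.
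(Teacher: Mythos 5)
Your overall strategy --- reduce to a real Monge--Amp\`ere equation, invoke Corollary \ref{c9.1} (which rests on the lift to $(T\Omega,g^T)$), and then verify the Einstein condition by differentiating the logarithm of the equation --- is exactly the paper's. But your specific ansatz does not close up, and the final displayed computation contains the error. You seek $g'=g+\nabla du$ with $\beta(g')=-g'$; taking $\log$ of your equation gives $\log\det(g')=u+h+\log\det(g)$, so
\begin{equation*}
\beta_{ij}(g')=-u_{ij}-h_{ij}+\beta_{ij}(g),
\end{equation*}
and for this to equal $-(g_{ij}+u_{ij})$ you need $h_{ij}=g_{ij}+\beta_{ij}(g)$ (not $h_{ij}=-\beta_{ij}(g)-g_{ij}$ as you wrote; with your sign the computation yields $\beta_{ij}(g')=g_{ij}+2\beta_{ij}(g)+\ldots$, not the Einstein condition). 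More seriously, your $h=\log\bigl(\det\kappa_{ij}(g)/\det g_{ij}\bigr)$ satisfies $h_{ij}=\partial_i\partial_j\log\det\kappa(g)+\beta_{ij}(g)$, so the needed identity would force $\partial_i\partial_j\log\det\kappa(g)=g_{ij}$, i.e.\ that $\log\det\kappa(g)$ is a global Hessian potential for $g$. That is false in general and is not among the hypotheses: a Hessian metric only admits \emph{local} potentials, so no globally defined smooth $h$ with $h_{ij}=g_{ij}+\beta_{ij}(g)$ need exist. This is precisely the obstruction your ansatz cannot avoid as long as the background of the unknown metric is $g$ itself.

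The paper's fix is to change the background: the hypothesis $\kappa(g)>cg$ makes $-\beta(g)=2\kappa(g)$ a complete metric with bounded geometry, and one solves
\begin{equation*}
\det\bigl(-\beta_{ij}(g)+u_{ij}\bigr)=e^{u}\,\det(g_{ij}),
\end{equation*}
for which $\log$ of both sides gives $\log\det(-\beta(g)+\nabla du)=u+\log\det(g)$ and hence
\begin{equation*}
\beta_{ij}\bigl(-\beta(g)+\nabla du\bigr)=-u_{ij}+\beta_{ij}(g)=-\bigl((-\beta_{ij}(g))+u_{ij}\bigr),
\end{equation*}
with no global potential of $g$ required (only the globally defined ratio $\det(g)/\det(-\beta(g))$ enters). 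The rest of your outline --- applying Corollary \ref{c9.1} with $K=1$, checking bounded geometry of the lift, and deducing completeness from uniform equivalence --- is correct and matches the paper once the background metric is corrected to $2\kappa(g)$.
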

\begin{proof}
Since that the second Koszul form $M$ is bounded from below by a positive constant, $-\beta (g)=2\kappa(g)$ can be seen as a Hessian metric with bounded geometry of infinity order on $M$. Consider the following equation
\begin{equation}\label{e9.3}
\det(-\beta_{ij}(g)+u_{ij})=e^{u}\exp^{\{\log\frac{\det(g)}{\det(-\beta (g))}\}}\det(-\beta_{ij}(g)),
\end{equation}
we can verify that the above equation satisfies the conditions in Corollary \ref{c9.1}, then \eqref{e9.3} exists a solution $u\in \tilde{C}^\infty(\Omega)$, and we have
\begin{equation}
\begin{aligned}
\beta_{ij}(-\beta(g)+\nabla du)&=-\frac{\partial^2}{\partial x^i\partial x^j}[\log \det(-\beta_{pq}(g)+u_{pq})]\\
&=-\frac{\partial^2}{\partial x^i\partial x^j}[u+\log\frac{\det(g)}{\det(-\beta (g))}+\log \det(-\beta_{pq}(g))]\\
&=-(-\beta_{ij}(g)+u_{ij}).
\end{aligned}
\end{equation}
Finally, $(-\beta(g)+\nabla du)$ is the complete Hesse-Einstein metric as we need.
\end{proof}

\appendix

\section{$(n-1,n-1)$ forms}
Let $M$ be a complex manifold of dimension $n>2$. There is a bijection from the space of positive definite $(1,1)$ forms to positive definite $(n-1,n-1)$ forms, give by
\begin{equation}\label{eb2.1}
\omega\mapsto\omega^{n-1}.
\end{equation}

Here are some conventions: For an $(n-1,n-1)$-form $\Theta$, we denote
\begin{equation}\label{eb2.2}
\begin{aligned}
\Theta=&(\sqrt{-1})^{n-1}(n-1)!\\
&\times \sum_{p,q}s(p,q)\Theta_{p\bar{q}}dz^1 \wedge d\bar{z}^1\cdots \wedge \widehat{dz^p}\wedge d\bar{z}^p \wedge \cdots \wedge dz^q\wedge \widehat{d\bar{z}^q} \cdots \wedge dz^n\wedge d\bar{z}^n,
\end{aligned}
\end{equation}
where
\begin{equation}
s(p,q)=\left\{ \begin{aligned}
-1,\;\;\;\;{\rm if} \;p>q;\\
1,\;\;\;\;\;{\rm if}\;p\leq q.
\end{aligned} \right.
\end{equation}
Here we introduce the sign function $s$ such that,
\begin{equation}
\begin{aligned}
dz^p&\wedge d\bar{z}^q \wedge s(p,q) \wedge dz^1 \wedge d\bar{z}^1\cdots \wedge \widehat{dz^p}\wedge d\bar{z}^p \wedge \cdots \wedge dz^q\wedge \widehat{d\bar{z}^q} \cdots \wedge dz^n\wedge d\bar{z}^n\\
&=dz^1 \wedge d\bar{z}^1\cdots \wedge dz^n\wedge d\bar{z}^n,\;\;\;{\rm for \;all}\;\;1\leq p,q\leq n.
\end{aligned}
\end{equation}

We denote
\begin{equation}
\det \Theta=\det(\Theta_{p\bar{q}}).
\end{equation}
If $\det \Theta\neq 0$, we denote the transposed inverse of $(\Theta_{p\bar{q}})$ by $(\Theta^{p\bar{q}})$, i.e.,
\[\sum_l \Theta_{i\bar{l}}\Theta^{j\bar{l}}=\delta_{ij}.\]
We note that, for a positive $(1,1)$-form $\omega$ given by
\begin{equation}\label{eb2.6}
\omega=\sqrt{-1}\sum_{i,j=1}^n g_{i\bar{j}}dz_i\wedge d\bar{z}_j,
\end{equation}
we obtain
\begin{equation}\label{eb2.7}
\omega^n=(\sqrt{-1})^n n!\det(g_{i\bar{j}})dz^1 \wedge d\bar{z}^1\cdots \wedge dz^n\wedge d\bar{z}^n,
\end{equation}
and we have
\begin{equation}\label{eb2.8}
(\omega^{n-1})_{i\bar{j}}=\det(g_{i\bar{j}})g^{i\bar{j}}.
\end{equation}
It follows that
\begin{equation}\label{eb2.9}
\det(\omega^{n-1})=\det(g_{i\bar{j}})^{n-1}.
\end{equation}

Write $*$ for the Hodge star operator with respect to $\omega$. The Hodge star operator $*$ takes any $(n-1,n-1)$ form to a $(1,1)$ form, and vice versa. This acts on real $(n-1,n-1)$-forms as follows. Consider a real $(n-1,n-1)$ given by \eqref{eb2.2}, if we are computing at a point in coordinates so that $\omega_{i\bar{j}}=\delta_{ij}$, then
\begin{equation}\label{eb2.11}
*\Theta=\sqrt{-1}(n-1)!\sum_{p,q}\Theta_{p\bar{q}}dz^q\wedge d\bar{z}^p,
\end{equation}
and
\begin{equation}\label{eb2.12}
**\Theta=\Theta.
\end{equation}

\section{Affine/Hessian manifolds}

\begin{definition} An affine manifold $(M,\nabla)$ is a differentiable manifold endowed with a flat connection $\nabla$.
\end{definition}
The subsequent statement is an immediate consequence of the affine manifold definition.
\begin{proposition}
\label{prop1}
\

\emph{(1)}
\begin{minipage}[t]{0.92\linewidth}
Suppose that $M$ admits a flat connection $\nabla$. Then there exist local coordinate systems on $M$ such that $\nabla_{\partial /\partial x^i} \partial /\partial x^j=0$. The changes between such coordinate systems are affine transformations.
\end{minipage}

\emph{(2)}
\begin{minipage}[t]{0.92\linewidth}
Conversely, if $M$ admits local coordinate systems such that the changes of the local coordinate systems are affine transformations, then there exists a flat connection $\nabla$ satisfying $\nabla_{\partial /\partial x^i} \partial /\partial x^j=0$ for all such local coordinate systems.
\end{minipage}
\end{proposition}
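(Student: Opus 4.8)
The plan is to establish the two halves of the statement separately, in each case translating between the flat connection and the coordinate atlas by means of the transformation rule for Christoffel symbols.

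For part (1), I would start from the fact that a flat connection $\nabla$ has, by definition, vanishing curvature and vanishing torsion. Fix $p\in M$ and a simply connected (say, contractible) neighborhood $U$ of $p$. Vanishing curvature makes parallel transport within $U$ path-independent, so extending a basis of $T_pM$ by parallel transport yields vector fields $e_1,\dots,e_n$ on $U$ that are everywhere linearly independent and $\nabla$-parallel. Vanishing torsion then gives $[e_i,e_j]=\nabla_{e_i}e_j-\nabla_{e_j}e_i-T(e_i,e_j)=0$, so $\{e_i\}$ is a frame of pairwise commuting vector fields. By the standard simultaneous-rectification theorem (a consequence of the Frobenius theorem), after shrinking $U$ there are coordinates $(x^1,\dots,x^n)$ with $\partial/\partial x^i=e_i$, and hence $\nabla_{\partial/\partial x^i}\partial/\partial x^j=\nabla_{e_i}e_j=0$: all Christoffel symbols vanish in these coordinates. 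Performing this around every point produces the desired atlas. To see that the transitions are affine, recall that under a coordinate change $x\mapsto y$ the Christoffel symbols transform as
\[
\widetilde{\Gamma}^{k}_{ij}=\frac{\partial y^k}{\partial x^l}\,\frac{\partial x^p}{\partial y^i}\,\frac{\partial x^q}{\partial y^j}\,\Gamma^l_{pq}+\frac{\partial y^k}{\partial x^l}\,\frac{\partial^2 x^l}{\partial y^i\partial y^j};
\]
if both $\Gamma$ and $\widetilde{\Gamma}$ vanish and the Jacobian $(\partial y^k/\partial x^l)$ is invertible, then $\partial^2 x^l/\partial y^i\partial y^j=0$ for all indices, so each $x^l$ is affine in $y$.

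For part (2), I would run the same computation in reverse. Given an atlas $\{(U_\alpha,(x^i_\alpha))\}$ whose transition maps are affine, define on each $U_\alpha$ a connection $\nabla^{\alpha}$ by decreeing its Christoffel symbols in the chart $(x^i_\alpha)$ to be identically zero, i.e. $\nabla^{\alpha}_{\partial/\partial x^i_\alpha}\partial/\partial x^j_\alpha=0$. On an overlap $U_\alpha\cap U_\beta$ the transition is affine, so $\partial^2 x^l_\alpha/\partial x^i_\beta\partial x^j_\beta=0$; substituting into the transformation rule above shows that the Christoffel symbols of $\nabla^{\alpha}$ computed in the chart $(x^i_\beta)$ also vanish, whence $\nabla^{\alpha}=\nabla^{\beta}$ on the overlap. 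The $\nabla^{\alpha}$ therefore glue to a global connection $\nabla$ on $M$, which by construction satisfies $\nabla_{\partial/\partial x^i}\partial/\partial x^j=0$ in every chart of the atlas, and which is flat since in each such chart the curvature and torsion tensors are polynomial expressions in the (vanishing) Christoffel symbols and their first derivatives.

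I do not expect a genuine obstacle here: the proposition is essentially a bookkeeping exercise with the Christoffel transformation law, which is why the author calls it immediate. The only input that is not a pure computation is the simultaneous-rectification theorem for commuting frames used in part (1), together with the mild topological point that one must first pass to a simply connected neighborhood before flatness delivers an honest $\nabla$-parallel frame; once that is in place, both directions follow from the displayed formula.
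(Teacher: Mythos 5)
Your proof is correct. The paper offers no argument at all for this proposition—it is stated in Appendix B as ``an immediate consequence of the affine manifold definition''—so there is nothing to compare against; your write-up simply supplies the standard argument (flatness gives a local parallel frame, torsion-freeness makes it a commuting frame, rectification gives affine coordinates, and the Christoffel transformation law handles both the affinity of transitions and the converse gluing). One point you handle well and should keep explicit: the paper's Definition of a flat connection does not literally say ``torsion-free,'' but the appendix later assumes $\nabla$ is torsion-free, and your part (1) genuinely needs that hypothesis (a curvature-flat connection with torsion admits no such coordinates), so your reading of ``flat'' as curvature- and torsion-free is the correct one.
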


Recently, Lee-Topping \cite{LT25} solved the Hamilton's pinching conjecture, they prove that a class of Riemannian manifolds is flat, see the following:

\begin{theorem}
Suppose $(M^3,g_0)$ is a complete (connected) noncompact three-dimensional Riemannian manifold with ${\rm Ric}\geq \varepsilon \mathcal{R}\geq 0$ for some $\varepsilon>0$. Then $(M^3,g_0)$ is flat.
\end{theorem}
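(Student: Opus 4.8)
This statement is Lee--Topping's resolution \cite{LT25} of Hamilton's pinching conjecture in dimension three, so rather than a proof I outline the strategy one would follow. The plan is to smooth $g_0$ using the Ricci flow, to exploit that the pinching inequality $\mathrm{Ric}\geq\varepsilon\mathcal{R}\geq 0$ is preserved (and quantitatively improves) along the flow, and then to push the flow to a limiting self-similar model which, in dimension three and under the pinching, can only be flat once $M$ is known to be noncompact.

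\textbf{Producing the flow and keeping the pinching.} In dimension three the full curvature operator is an explicit algebraic function of the Ricci tensor, so the hypothesis already gives $\mathrm{Ric}\geq 0$ together with two-sided bounds on the sectional curvatures in terms of $\mathcal{R}$. Using the existence theory for the Ricci flow issuing from a complete noncompact manifold with nonnegative, pinched curvature, one obtains a complete smooth solution $(M^3,g(t))$, $t\in(0,T)$, with $g(t)\to g_0$ as $t\downarrow 0$ and with local curvature estimates for positive times. A maximum principle for reaction--diffusion systems (Uhlenbeck trick, localised by cut-off functions since $M$ is noncompact) then shows that $\mathrm{Ric}\geq\varepsilon\mathcal{R}\geq 0$ persists for all $t\in(0,T)$, and the strong maximum principle gives the dichotomy: either the flow is flat, or $\mathrm{Ric}>0$ strictly on $M\times(0,T)$; the quantitative improvement of the pinching constant that Lee--Topping extract here is the technical engine of the whole argument.

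\textbf{Passing to a model and concluding.} Assume $(M,g_0)$ is not flat, so $\mathrm{Ric}>0$ along the flow. Rescaling $g(t)$ along a sequence of spacetime points escaping to spatial infinity --- equivalently, analysing the expanding Ricci flow coming out of a metric tangent cone at infinity, or extracting a blow-down ancient solution --- produces a complete, non-flat, self-similar Ricci flow in dimension three still satisfying $\mathrm{Ric}\geq\varepsilon\mathcal{R}\geq 0$. The classification of such solutions leaves only compact round quotients of $S^3$ (impossible as a limit at spatial infinity of a noncompact manifold) and the shrinking cylinder $S^2\times\mathbb{R}$ and its quotient (which have a flat direction and hence violate $\mathrm{Ric}\geq\varepsilon\mathcal{R}$ as soon as $\mathcal{R}>0$); the Bryant soliton is likewise ruled out because its radial Ricci curvature is not uniformly pinched against $\mathcal{R}$ at infinity. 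This contradiction forces $\mathrm{Ric}\equiv 0$ along the flow, hence --- the Weyl tensor vanishing in dimension three --- the flow is flat, and letting $t\downarrow 0$ shows $g_0$ is flat.

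\textbf{Main obstacle.} The genuinely hard points are (i) constructing a complete Ricci flow with workable a priori estimates when the initial curvature is merely nonnegative and possibly unbounded, and (ii) the rigidity in the last step, where one must exclude \emph{every} non-flat pinched limit --- including the collapsed case, where Perelman's $\kappa$-non-collapsing is not available for free, and the borderline cylinder. It is precisely here that Lee--Topping's scale-invariant improvement of the pinching and a delicate point-selection argument are needed; everything else is a careful assembly of standard Ricci-flow technology.
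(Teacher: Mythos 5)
The statement you are addressing is not proved anywhere in this paper: it appears in Appendix B purely as a quoted external result, introduced by the sentence ``Recently, Lee--Topping \cite{LT25} solved the Hamilton's pinching conjecture\dots'' and used only as background motivation for the discussion of flat/affine structures. There is therefore no in-paper argument to compare your submission against, and your text --- which you yourself describe as ``rather than a proof \dots the strategy one would follow'' --- cannot be accepted as a proof of the statement in the context of this paper; the correct move here is simply to cite \cite{LT25}, as the author does.

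As an outline of the Lee--Topping strategy, what you wrote captures the right broad shape (run a Ricci flow, preserve and improve the pinching, rule out non-flat models), but nearly every step you state as if it were available off the shelf is exactly where the difficulty lies and is left as a genuine gap. Concretely: (i) ``the existence theory for the Ricci flow issuing from a complete noncompact manifold with nonnegative, pinched curvature'' does not exist as a black box --- the initial metric may have unbounded curvature and may be collapsed, and constructing a suitable flow is a major part of \cite{LT25}; (ii) the localized maximum principle preserving $\mathrm{Ric}\geq\varepsilon\mathcal{R}$ on a noncompact manifold without a priori curvature bounds is not standard and cannot be dismissed with ``cut-off functions''; (iii) the asserted ``classification of such solutions'' obtained by blowing down is not a known classification one can invoke --- your case analysis (round quotients, cylinders, Bryant soliton) presumes a compactness/classification theory for pinched ancient or self-similar solutions that is itself part of what must be established. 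So the proposal is a plausible roadmap but not a proof, and it does not correspond to any argument given in this paper.
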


Let $(M, \nabla, g)$ be an affine manifold with a Riemannian metric $g$, where $\hat{\nabla}$ denotes the Levi-Civita connection of
$(M,g)$. Defining $\gamma=\hat{\nabla}-\nabla$. Since both connections $\nabla$ and $\hat{\nabla}$ are torsion-free, we obtain
\[\gamma_X Y=\gamma_Y X\]
Furthermore, in affine coordinate systems, the components $\gamma_{\;jk}^i$ of $\gamma$
coincide with the Christoffel symbols $\Gamma_{\;jk}^i$ associated with the Levi-Civita connection $\hat{\nabla}$.

For the pair $(g,\nabla)$, the Hessian curvature tensor is given by $Q=\nabla \gamma$.
A Riemannian metric $g$ on a affine manifold $(M,\nabla)$ is called Hessian if and only if it admits local expression through $g=\nabla d\varphi$. An affine manifold endowed with a Hessian metric is called a Hessian manifold.

Given a flat connection $\nabla$, a local coordinate system $\{x^1,\ldots,x^n\}$ satisfying $\nabla_{\frac{\partial}{\partial x^i}} \frac{\partial}{\partial x^j}=0$ is called an affine coordinate system with respect to $\nabla$.
Let $(M,\nabla,g)$ be a Hessian manifold and its metric $g$ admits local representation
\[g_{ij}=\frac{\partial^2 \phi}{\partial x^i \partial x^j}\]
where $\{x^1,\ldots,x^n\}$ is an affine coordinate system with respect to $\nabla$. The next two results are established in \cite{r11}.

\begin{proposition}\label{p2.3}
Let $(M,\nabla)$ be an affine manifold and $g$ a Riemannian metric on $M$. Then the following are equivalent:

\emph{(1)} $g$ is a Hessian metric

\emph{(2)} $(\nabla_X g)(Y,Z)=(\nabla_Y g)(X,Z)$

\emph{(3)} $\dfrac{\partial g_{ij}}{\partial x^k}=\dfrac{\partial g_{kj}}{\partial x^i}$

\emph{(4)} $g(\gamma_X Y,Z)=g(Y,\gamma_X Z)$

\emph{(5)} $\gamma_{ijk}=\gamma_{jik}$

\end{proposition}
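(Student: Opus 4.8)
The plan is to run the equivalences through the coordinate condition (3), the most concrete of the five, working throughout in an affine coordinate system $\{x^1,\dots,x^n\}$ (so that the connection coefficients of $\nabla$ vanish); write $\partial_i=\partial/\partial x^i$. First, $(2)\Leftrightarrow(3)$ is immediate: since $\nabla_{\partial_i}\partial_j=0$ one has $(\nabla_{\partial_k}g)(\partial_i,\partial_j)=\partial_k g_{ij}$, so (2) is literally the identity $\partial_k g_{ij}=\partial_i g_{kj}$ of (3). Similarly $(4)\Leftrightarrow(5)$: condition (5) is the component form of the self-adjointness statement (4), and because $\gamma_XY=\gamma_YX$ (both $\nabla$ and $\hat{\nabla}$ being torsion-free), i.e. $\gamma^l_{jk}=\gamma^l_{kj}$, each of (4) and (5) is equivalent to total symmetry of the trilinear form $(X,Y,Z)\mapsto g(\gamma_XY,Z)$, hence to each other.

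For $(1)\Rightarrow(3)$: if locally $g=\nabla d\varphi$, i.e. $g_{ij}=\partial_i\partial_j\varphi$ in affine coordinates, then $\partial_k g_{ij}=\partial_i\partial_j\partial_k\varphi$ is symmetric in $i,j,k$, in particular under $i\leftrightarrow k$, which is (3). The converse $(3)\Rightarrow(1)$ is the step with real content, and I would prove it by applying the Poincar\'e lemma twice on a coordinate ball. Condition (3) says that for each fixed $j$ the $1$-form $\eta_j:=\sum_i g_{ij}\,dx^i$ is closed, so $\eta_j=d\psi_j$ for some function $\psi_j$, i.e. $\partial_i\psi_j=g_{ij}$. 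Since $g$ is symmetric, $\partial_i\psi_j=g_{ij}=g_{ji}=\partial_j\psi_i$, so the $1$-form $\sum_j\psi_j\,dx^j$ is also closed; writing it as $d\varphi$ gives $\psi_j=\partial_j\varphi$, hence $g_{ij}=\partial_i\partial_j\varphi$, so $g$ is Hessian. Note that everything here is local, so no topological hypothesis on $M$ is needed, consistent with the phrasing "admits local expression $g=\nabla d\varphi$".

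Finally I would link (3) and (5). In an affine coordinate system the components of $\gamma=\hat{\nabla}-\nabla$ coincide with the Christoffel symbols of the Levi-Civita connection of $g$, so, setting $\gamma_{ijk}:=g_{il}\gamma^l_{jk}$, one has $\gamma_{ijk}=\tfrac12(\partial_j g_{ik}+\partial_k g_{ij}-\partial_i g_{jk})$. Subtracting the same expression with $i$ and $j$ interchanged gives $\gamma_{ijk}-\gamma_{jik}=\partial_j g_{ik}-\partial_i g_{jk}$, so $\gamma_{ijk}=\gamma_{jik}$ for all $i,j,k$ holds precisely when $\partial_j g_{ik}=\partial_i g_{jk}$ for all $i,j,k$, which is (3) after relabeling. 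Together these give $(1)\Leftrightarrow(3)\Leftrightarrow(2)$ and $(3)\Leftrightarrow(5)\Leftrightarrow(4)$, hence the full equivalence. I do not anticipate any real obstacle; the only genuinely nontrivial point is the Poincar\'e-lemma argument for $(3)\Rightarrow(1)$, and there the symmetry of $g$ makes the second closedness check automatic.
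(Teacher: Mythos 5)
Your proof is correct and complete. Note, however, that the paper does not actually prove this proposition: it is stated as a known result and attributed to Shima's book (reference \cite{r11}), so there is no in-paper argument to compare against. Your write-up supplies exactly the standard proof one finds there: the chain $(1)\Leftrightarrow(3)\Leftrightarrow(2)$ via the double Poincar\'e-lemma integration (where the symmetry $g_{ij}=g_{ji}$ makes the second closedness check automatic, as you observe), and $(3)\Leftrightarrow(5)\Leftrightarrow(4)$ via the Koszul formula for $\gamma^l_{jk}=\Gamma^l_{jk}$ in affine coordinates together with the observation that either of (4) or (5), combined with the automatic symmetry $\gamma^l_{jk}=\gamma^l_{kj}$ coming from torsion-freeness, amounts to total symmetry of the cubic form $g(\gamma_XY,Z)$. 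One small caveat: the paper never pins down the index-lowering convention for $\gamma_{ijk}$, so you were right to declare yours explicitly ($\gamma_{ijk}=g_{il}\gamma^l_{jk}$); with that convention (5) is symmetry in the first two slots, which is indeed the nontrivial condition, and your computation $\gamma_{ijk}-\gamma_{jik}=\partial_jg_{ik}-\partial_ig_{jk}$ is exactly right. No gaps.
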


\begin{proposition}\label{p2.4}
 Let $\hat{R}$ be the Riemannian curvature of a Hessian metric $g=\nabla d\phi$ and $Q=\nabla \gamma$ be the Hessian curvature tensor for $(g,\nabla)$. Then

\emph{(1)} $Q_{ijkl}=\dfrac{1}{2}\dfrac{\partial^4 \phi}{\partial x^i \partial x^j \partial x^k \partial x^l}-\dfrac{1}{2}g^{pq}\dfrac{\partial^3 \phi}{\partial x^i \partial x^k \partial x^p}\dfrac{\partial^3 \phi}{\partial x^j \partial x^l \partial x^q}$

\emph{(2)} $\hat{R}(X,Y)=-[\gamma_X,\gamma_Y],\;\;\;\hat{R}^i_{\;jkl}=\gamma^i_{\;lm}\gamma^m_{\;jk}-\gamma^i_{\;km}\gamma^m_{\;jl}.$

\emph{(3)}
$\hat{R}_{ijkl}=\dfrac{1}{2}(Q_{ijkl}-Q_{jikl})$

\end{proposition}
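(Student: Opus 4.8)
The plan is to carry out the whole argument in a local affine coordinate system $\{x^1,\ldots,x^n\}$ adapted to $\nabla$, i.e. with $\nabla_{\partial/\partial x^i}\partial/\partial x^j=0$, in which $g$ has a local potential $g_{ij}=\partial_i\partial_j\phi$; I abbreviate $\phi_{i_1\cdots i_r}=\partial_{i_1}\cdots\partial_{i_r}\phi$. The only structural input is Proposition \ref{p2.3}: because $g$ is Hessian, $\gamma_{ijk}:=g(\gamma_{\partial_i}\partial_j,\partial_k)$ is the Christoffel symbol of the first kind $\tfrac12(\partial_i g_{jk}+\partial_j g_{ik}-\partial_k g_{ij})$, and since $\partial_i g_{jk}=\phi_{ijk}$ is totally symmetric this collapses to $\gamma_{ijk}=\tfrac12\phi_{ijk}$, hence $\gamma_{ijk}$ is totally symmetric in $i,j,k$ and $\gamma^i_{\ jk}=\tfrac12 g^{im}\phi_{jkm}$. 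Everything else is differentiation and index bookkeeping, using repeatedly that in affine coordinates the flat connection $\nabla$ acts on tensor components as the ordinary partial derivative, and that $\partial_k g^{im}=-g^{ip}g^{mq}\phi_{kpq}$.

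For part (1): since $Q=\nabla\gamma$ and the $\nabla$-Christoffel symbols vanish in affine coordinates, $Q^i_{\ jkl}=\partial_k\gamma^i_{\ jl}$ with Shima's convention (derivative index in the third slot). Substituting $\gamma^i_{\ jl}=\tfrac12 g^{im}\phi_{jlm}$, differentiating, and using the formula for $\partial_k g^{im}$ gives $Q^i_{\ jkl}=\tfrac12 g^{im}\phi_{ijkl}-\tfrac12 g^{ip}g^{mq}\phi_{kpq}\phi_{jlm}$; lowering the contravariant index with $g_{ia}$ turns the fourth-derivative term into $\tfrac12\phi_{ijkl}$ and, after relabelling the summed indices and using symmetry of $g^{pq}$, the remaining term into $-\tfrac12 g^{pq}\phi_{ikp}\phi_{jlq}$, which is exactly the asserted formula (1).

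For part (2): write $\hat\nabla=\nabla+\gamma$, so in affine coordinates $\hat\nabla_{\partial_i}\partial_j=\gamma^k_{\ ij}\partial_k$, and compute $\hat R(\partial_i,\partial_j)\partial_k=\hat\nabla_i\hat\nabla_j\partial_k-\hat\nabla_j\hat\nabla_i\partial_k$ directly; the coefficient of $\partial_p$ is $\partial_i\gamma^p_{\ jk}-\partial_j\gamma^p_{\ ik}+\gamma^m_{\ jk}\gamma^p_{\ im}-\gamma^m_{\ ik}\gamma^p_{\ jm}$. The decisive step — and the one place the Hessian hypothesis really enters — is that $\partial_i\gamma^p_{\ jk}-\partial_j\gamma^p_{\ ik}$ reduces to the purely quadratic expression $-2\gamma^p_{\ im}\gamma^m_{\ jk}+2\gamma^p_{\ jm}\gamma^m_{\ ik}$: differentiating $\gamma^p_{\ jk}=\tfrac12 g^{pm}\phi_{jkm}$, the two fourth-derivative contributions $\tfrac12 g^{pm}\phi_{ijkm}$ and $\tfrac12 g^{pm}\phi_{jikm}$ are equal by total symmetry of $\phi$ and cancel in the difference, while the $\partial g^{-1}$ pieces recombine into $\gamma\gamma$-products via $g^{pa}\phi_{iab}=2\gamma^p_{\ ib}$. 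Substituting back, the coefficient collapses to $\gamma^p_{\ jm}\gamma^m_{\ ik}-\gamma^p_{\ im}\gamma^m_{\ jk}$, i.e. $\hat R(\partial_i,\partial_j)=-[\gamma_{\partial_i},\gamma_{\partial_j}]$ as an endomorphism, and a relabelling of indices puts this in the stated index form $\hat R^i_{\ jkl}=\gamma^i_{\ lm}\gamma^m_{\ jk}-\gamma^i_{\ km}\gamma^m_{\ jl}$.

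For part (3): lower the contravariant index in the index form of (2) with $g_{ia}$ and insert $\gamma_{ijk}=\tfrac12\phi_{ijk}$, $\gamma^m_{\ jk}=\tfrac12 g^{mp}\phi_{jkp}$ to obtain $\hat R_{ijkl}=\tfrac14 g^{pq}(\phi_{ilp}\phi_{jkq}-\phi_{ikp}\phi_{jlq})$. On the other hand, antisymmetrising (1) in its first two slots kills the totally symmetric term $\tfrac12\phi_{ijkl}$, leaving $\tfrac12(Q_{ijkl}-Q_{jikl})=-\tfrac14 g^{pq}(\phi_{ikp}\phi_{jlq}-\phi_{jkp}\phi_{ilq})$; swapping the dummy indices $p\leftrightarrow q$ in the second term (legitimate since $g^{pq}$ is symmetric) rewrites this as $\tfrac14 g^{pq}(\phi_{ilp}\phi_{jkq}-\phi_{ikp}\phi_{jlq})$, which matches $\hat R_{ijkl}$; this proves (3). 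I do not expect any serious obstacle here — the proposition is a direct local computation — the only points requiring care are keeping the index conventions for $Q=\nabla\gamma$ and for the lowered $\hat R_{ijkl}$ consistent so that the signs in (2) and (3) come out, and recognising that the cancellation of fourth-derivative terms in step (2) is precisely what the Hessian condition of Proposition \ref{p2.3} buys (the identity fails for a general affine metric).
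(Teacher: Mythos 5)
Your computation is correct: with $\gamma_{ijk}=\tfrac12\phi_{ijk}$ in affine coordinates, your formulas for $\partial_k\gamma^i_{\ jl}$, the cancellation of the fourth-derivative terms in the curvature difference $\partial_i\gamma^p_{\ jk}-\partial_j\gamma^p_{\ ik}=-2\gamma^p_{\ im}\gamma^m_{\ jk}+2\gamma^p_{\ jm}\gamma^m_{\ ik}$, and the index bookkeeping in (1)--(3) all check out, and the three parts are mutually consistent with the conventions $Q_{ijkl}=g_{ia}\partial_k\gamma^a_{\ jl}$ and $\hat R(\partial_k,\partial_l)\partial_j=\hat R^i_{\ jkl}\partial_i$. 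Note, however, that the paper gives no proof of this proposition at all: it is quoted from Shima's book \cite{r11} (``The next two results are established in \cite{r11}''), so there is no argument in the paper to compare against. Your blind proof is essentially the standard verification found in that reference -- Shima obtains (2) from the curvature formula for $\hat\nabla=\nabla+\gamma$ together with flatness of $\nabla$ and the Codazzi symmetry of $\gamma$, which is exactly the mechanism your direct coordinate computation exhibits -- so your write-up amounts to a correct, self-contained proof of the cited fact rather than a genuinely different route. One small framing point: the identification of $\gamma^i_{\ jk}$ with the Levi-Civita Christoffel symbols in affine coordinates holds for any affine manifold with a Riemannian metric (it only uses flatness of $\nabla$); what the Hessian hypothesis buys is $\partial_i g_{jk}=\phi_{ijk}$ and hence the total symmetry that drives the collapse $\gamma_{ijk}=\tfrac12\phi_{ijk}$ and the cancellation in (2), as you correctly observe at the end.
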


\begin{definition}
\label{Kos}
Let $(M,\nabla,g)$ be a Hessian manifold and
$v$ the volume element of $g$. The first Koszul form $\alpha$ and the second Koszul form $\kappa$ for $(\nabla,g)$ are defined by
\[\nabla_X v=\alpha(X)v\]
\[\kappa=\nabla \alpha\]
\end{definition}
It follows that
\[\alpha(X)={\rm Tr} {\gamma_X}\]
and
\[\alpha_i=\frac{1}{2}\frac{\partial \log \det[g_{pq}]}{\partial x^i}=\gamma_{\;ki}^k\]
\[\kappa_{ij}=\frac{\partial \alpha_i}{\partial x^j}=\frac{1}{2}\frac{\partial \log \det[g_{pq}]}{\partial x^i \partial x^j}\]
locally.

%\noindent 2.2. \textbf{Hessian structures and K{\"a}hlerian structures.}

\end{document}